\def\@maketitle{\newpage
    \null
    \vskip .8truein
    \begin{center}%
     {\bf \@title \par}%
     \vskip 1.5em
     {\small
      \lineskip .5em
      \begin{tabular}[t]{c}\@author
      \end{tabular}\par}%
    \end{center}%
    \par
    \vskip .4truein}
\def\dfrac#1#2{\ds{\frac{#1}{#2}}}
\let\nn=\nonumber
\newcommand{\re}{{\mathbb R}}
\newcommand{\He}{{\mathbb H}}
\newcommand{\cH}{{\mathcal H}}
\let\ds=\displaystyle
\def\R{{\mathbb R}}
\let\a=\alpha
\newtheorem{theorem}{Theorem}[section]
\newtheorem{lemma}{Lemma}[section]
\newtheorem{proposition}{Proposition}[section]
\newtheorem{definition}{Definition}[section]
\newtheorem{corollary}{Corollary}[section]
\newtheorem{remark}{Remark}[section]
\newtheorem{example}{Example}[section]
\newtheorem{hypothesis}{Hypothesis}[section]
\DeclareMathOperator{\diver}{div}
\def\proof{\list{}{\setlength{\leftmargin}{0pt}
                      \parskip=0pt\parsep=0pt\listparindent=2em
                      \itemindent=0pt}\item[]\futurelet\testchar\@maybe}
\def\@maybe{\ifx[\testchar \let\next\@Opt
          \else \let\next\@NoOpt \fi \next}
\def\@Opt[#1]{{\it Proof of #1.\ }}\def\@NoOpt{{\it Proof.\ }}
\begin{document}
\title{\Large \bf Non coercive unbounded first order Mean Field Games: the Heisenberg example }

\author{{\large \sc Paola Mannucci\thanks{Dipartimento di Matematica ``Tullio Levi-Civita'', Universit\`a di Padova, mannucci@math.unipd.it}, Claudio Marchi \thanks{Dipartimento di Ingegneria dell'Informazione \& Dipartimento di Matematica ``Tullio Levi-Civita'', Universit\`a di Padova, claudio.marchi@unipd.it}, Nicoletta Tchou \thanks{Univ Rennes, CNRS, IRMAR - UMR 6625, F-35000 Rennes, France, nicoletta.tchou@univ-rennes1.fr}}} 
\maketitle

\begin{abstract}
In this paper we study evolutive first order Mean Field Games in the Heisenberg group;
%~$\He^1$; 
each agent can move in the whole space but it has to follow ``horizontal'' trajectories which are given in terms of the vector fields generating the group and the kinetic part of the cost depends only on the horizontal velocity. The Hamiltonian is not coercive in the gradient term and the coefficients of the first order term in the continuity equation may have a quadratic growth at infinity.
The main results of this paper are two: the former is to establish the existence of a weak solution to the Mean Field Game systems while the latter is to represent this solution following the Lagrangian formulation of the Mean Field Games.
We also provide some generalizations to Hei\-sen\-berg-type structures.
\end{abstract}

\noindent {\bf Keywords}: Mean Field Games, first order Hamilton-Jacobi equations, continuity equation, Fokker-Planck equation, noncoercive Hamiltonian, Heisenberg group, Hei\-senberg-type groups, degenerate optimal control problem.

\noindent  {\bf 2010 AMS Subject classification:} 35F50, 35Q91, 49K20, 49L25.

\section{Introduction}
In this paper we study evolutive first order Mean Field Game (briefly, MFG) systems in the Heisenberg group~$\He^1$. Let us recall that the MFG theory started with the works by Lasry and Lions \cite{LL1,LL2,LL3} and by Huang, Malham\'e and Caines \cite{HMC} (see the works \cite{C, BFY, GPV} for the many developments in recent years) and studies Nash equilibria when the number of agents tends to infinity and each agent's aim is to control its dynamics so to minimize a given cost which depends on the distribution of the whole population. On the other hand, the Heisenberg group can be seen as the first non-Euclidean space which is still endowed with nice properties as a (noncommutative) group operation, a family of dilations and a manifold structure (see the monographs \cite{BLU, Montg} for an overview). The Heisenberg setting gives rise to non holonomic constraints in the optimal problem addressed by the agent, see
\cite[p.135]{Cor} and \cite[p.52]{Bel}. Moreover let us recall that many control problems, as the vehicle model, can be written in a form similar to the Heisenberg one: for real models and numerical analysis see \cite{AC08, MuSa, DFPD} and references therein.
From the viewpoint of a single agent in the MFG, the Heisenberg's framework entails that its state cannot change isotropically in all the directions but it can move only along {\it admissible} trajectories.

We shall consider systems of the form
\begin{equation}\label{eq:MFGintrin}
\left\{\begin{array}{lll}
(i)&\quad-\partial_t u+\frac{|D_{\cH}u|^2}{2}=F[m(t)](x)&\qquad \textrm{in }\He^1\times (0,T)\\
(ii)&\quad\partial_t m-\diver_{\cH}  (m D_{\cH}u)=0&\qquad \textrm{in }\He^1\times (0,T)\\
(iii)&\quad m(x,0)=m_0(x), u(x,T)=G[m(T)](x)&\qquad \textrm{on }\He^1,
\end{array}\right.
\end{equation}
where $D_{\cH}$ and $\diver_{\cH}$ are respectively the {\it horizontal gradient} and the {\it horizontal divergence} while $F$ and $G$ are strongly regularizing coupling operators.
For readers which are not familiar with intrinsic calculus, in Euclidean coordinates, system~\eqref{eq:MFGintrin} becomes
\begin{equation}\label{eq:MFG1}
\left\{\begin{array}{lll}
(i)&\quad-\partial_t u+H(x, Du)=F[m(t)](x)&\qquad \textrm{in }\re^3\times (0,T)\\
(ii)&\quad\partial_t m-\diver  (m\, \partial_pH(x, Du))=0&\qquad \textrm{in }\re^3\times (0,T)\\
(iii)&\quad m(x,0)=m_0(x), u(x,T)=G[m(T)](x)&\qquad \textrm{on }\re^3,
\end{array}\right.
\end{equation}
where, for $p=(p_1,p_2, p_3)$ and $x=(x_1, x_2,x_3)$, the Hamiltonian $H(x,p)$ is
\begin{equation}\label{Hd}
H(x,p):=\frac{1}{2}((p_1-x_2p_3)^2+(p_2+x_1p_3)^2)=\frac{|pB(x)|^2}{2}\ \textrm{with }\
 B(x):=  \begin{pmatrix}
\!\!&1& 0\!\\
\!\!&0&1&\!\\
\!\!&-x_2&x_1\!
\end{pmatrix}\in M^{3\times 2}
\end{equation}
while the drift $\partial_pH(x,p)$ is
\begin{equation}\label{Hp}
\partial_pH(x,p)=pB(x)B(x)^T=(p_1-x_2p_3, p_2+x_1p_3, -p_1x_2+p_2x_1+p_3(x_1^2+x_2^2)).
\end{equation}

These MFG systems arise when the generic player with state~$x$ at time~$t$ can move in the whole space but it must follow {\it horizontal curves} with respect to the two vector fields $X_1$ and $X_2$ generating the Heisenberg group (see \eqref{vectorFields} below):
\begin{equation}\label{DYNH}
x'(s)=\alpha_1(s)X_1(x(s))+\alpha_2(s)X_2(x(s))
\end{equation}
namely
\begin{equation*}
x_1'(s)=\alpha_1(s),\quad x_2'(s)=\alpha_2(s),\quad x_3'(s)=-x_2(s)\alpha_1(s)+x_1(s)\alpha_2(s).
\end{equation*}
Each agent wants to choose the control $\alpha=(\alpha_1, \alpha_2)$ in $L^2([t,T];\R^2)$ in order to minimize the cost
\begin{equation}\label{Jgen}
J_{x,t}^{m}(\alpha):=\int_t^T\left[\frac12 |\alpha(\tau)|^2+F[m(\tau)](x(\tau))\right]\,d\tau+G[m(T)](x(T))
\end{equation}
where $m(\cdot)$ is the evolution of the whole population's distribution while $(x(\cdot),\alpha(\cdot))$ is a trajectory obeying to~\eqref{DYNH}.

Let us observe three important issues of these MFG systems: $(i)$ the Hamiltonian~$H$ is not coercive in~$p$, $(ii)$ the system is in the whole space, $(iii)$ in equation~\eqref{eq:MFG1}-(ii) the coefficient of the first order term may have quadratic growth in~$x$.\\
Point~$(i)$ prevents the application of standard approaches for first order MFG (for instance, see~\cite{BFY,C,CGPT}) because they require uniform coercivity of the Hamiltonian. Moreover, we recall that the papers~\cite{AMMT, CM, MMMT} already tackled MFG systems with noncoercive Hamiltonians for first order MFG while papers \cite{DF, FGT} dealt with second order hypoelliptic MFG. However, the results in~\cite{AMMT,CM} do not apply to the present setting because these papers consider a different kind of admissible trajectories.
Note that the present case is not even encompassed in the previous work~\cite{MMMT} because $\det B(x)B(x)^T= 0$ for any $x\in \re^3$.
The degeneracy of the matrix $B(x)B(x)^T$ implies that we cannot prove the uniqueness of optimal trajectories for a.e. starting points with respect to the initial distribution of players and hence to get a representation formula as in \cite{C, MMMT}.
The issue of finding necessary or sufficient conditions ensuring the uniqueness of the optimal trajectories for a.e. starting points is challenging and open; we hope to study it in a future work.\\
On the other hand, points~$(ii)$ and~$(iii)$ give rise to some difficulties for applying the vanishing viscosity method, especially for the Cauchy problem for equation~\eqref{eq:MFG1}-(ii) with the viscosity term. Actually in this problem the coefficients grow ``too much at infinity'' and one cannot invoke nor standard results for the well-posedness of the problem neither its interpretation in terms of a stochastic optimal control problem. 

The aims of this paper are two; the former one is to prove the existence of a weak solution to system~\eqref{eq:MFGintrin} while the latter, and main, one is to prove that this weak solution is also a {\it mild} solution in the sense introduced by Cannarsa and Capuani~\cite{CC} for the case of state-constrained MFG where the agents control their velocity.\\
In order to obtain the existence of a weak solution, we establish several properties of the solution to the Hamilton-Jacobi equation~\eqref{eq:MFGintrin}-(i) (as semiconcavity, Lipschitz continuity, regularity of the optimal trajectories for the associated optimal control problem). Afterwards, we adapt the techniques introduced by PL. Lions in his lectures at Coll\`ege de France \cite{C, LL3} (see also \cite{AMMT, MMMT} for similar approaches for some noncoercive Hamiltonians).
To get the result we perform three approximations: a completion $B^\varepsilon$ of~$B$ (see~\eqref{He}), 
a vanishing viscosity procedure with the {\it Euclidean} Laplacian and a truncation argument of the coefficients of matrix~$B$. 
The completion $B^\varepsilon$ fulfills $\det B^\varepsilon(x)(B^\varepsilon(x))^T\ne 0$ for any $x\in \re^3$ which is a crucial property for getting uniqueness of optimal trajectory for $m_0$-a.e. starting point.
The vanishing viscosity procedure permits to exploit the regularity results of the Laplacian while the truncation argument permits to avoid parabolic Cauchy problems with coefficients growing ``too much'' at infinity.  \\
Let us note that the matrix~$B^\varepsilon$ is associated to different constraints on horizontal curves (see \eqref{DYNe} below) and that the geometry of the space changes drastically as $\varepsilon\to 0^+$; we refer the reader to the paper~\cite{CCR} for a discussion on this issue. 
Finally, we shall prove that this weak solution is also a {\it mild} solution in the sense introduced in~\cite{CC}.
Roughly speaking, as in the Lagrangian approach for MFG (see~\cite{BCS,CC}), this property means that, for a.e. starting state, the agents follow optimal trajectories for the optimal control problem associated to the Hamilton-Jacobi equation.
In order to prove that our solution is in fact a mild solution, we shall use the superposition principle~\cite[Theorem 8.2.1]{AGS}.
To do this we need to prove in the Heisenberg framework an optimal synthesis result and the aforementioned properties of the matrix~$B^\varepsilon$.

It is worth noting that our techniques relies on some compactness of initial distribution of players and on sublinear growth of the coefficients of~$B$ but they do not need the H\"ormander condition. Indeed, we present our results for purely quadratic Hamiltonian  as in~\eqref{Hd} on the (first) Heisenberg group~$\He^1$ only for the sake of simplicity. As a matter of facts, 
our results can be extended to any Hamiltonian of the form $H(x,p)=|pB(x)|^\gamma$ with $\gamma\in[1,2]$ and some structures of {\it Heisenberg type} (see \cite[Chapter 18]{BLU} for precise definition and main properties). 
As an example, we apply our result to the case of classical Grushin dynamics with unbounded coefficients. For the case of Grushin dynamics with bounded coefficients, we refer the reader to \cite[Theorem 1.1]{MMMT} where a more precise interpretation of the system is obtained taking advantage of such a boundedness.

This paper is organized as follows. Section \ref{Preliminaries} is devoted to give the main definitions including the Heisenberg group, contains the assumptions and the statement of our main result (Theorem~\ref{thm:main_illi}) whose proof is postponed in Section \ref{sect:pr_illi}. 
In  Section \ref{OC} we study several properties of the solution of the optimal control problem associated to the Hamilton-Jacobi equation \eqref{eq:MFGintrin}-(i). Section~\ref{sect:c_illi} is devoted to establish the well posedness of the continuity equation; in particular, Theorem~\ref{prp:m} states some regularity estimates which are crucial in the proof of Theorem~\ref{thm:main_illi}.
In Section \ref{ext} we provide two generalizations of our result: in the former we consider structures of Heisenberg type while in the latter
we tackle power-type Hamiltonians with exponent $\gamma\in[1,2]$.

%%%%%%%%%%%%%%%%%

\section{Preliminaries: definitions, assumptions and main results}\label{Preliminaries}
In this section, we introduce the notations (including the functional spaces needed for the definition of solution to system~\eqref{eq:MFGintrin} and the Heisenberg group), fix our assumptions and state the main results of this paper.

\subsection{Notations and Heisenberg group}
For any function $u:\re^n\times\re\ni (x,t)\to u(x,t)\in \re$, $Du$ and~$D^2u$ stand for the Euclidean gradient and respectively Hessian matrix with respect to~$x$.
We denote $C^2(\re^n)$ the space of functions with continuous second order derivatives  and we write $\|f\|_{C^2(\re^n)}:=\sup_{x\in \re^n}[|f(x)|+|Df(x)|+|D^2f(x)|]$.\\
For any complete separable metric space $X$, ${\mathcal P}(X)$ denotes the set of Borel probability measures on~$X$. For any complete separable metric spaces $X_1$ and $X_2$, any measure $\eta\in{\mathcal P}(X_1)$ and any function $\phi:X_1\to X_2$, we denote $\phi\#\eta\in{\mathcal P}(X_2)$ the {\it push-forward} of~$\eta$ through~$\phi$, i.e. $\phi\#\eta(B):=\eta(\phi^{-1}(B))$, for any $B$ measurable set, $B\subset X_2$ (see~\cite[section~5.2]{AGS} for its main properties).
For a function $m\in C([0,T],{\mathcal P}(X))$, $m_t$ stands for the probability~$m(t,\cdot)$ on~$X$.\\
We introduce the functional spaces ~$\mathcal P_1(\re^n)$ (respectively, $\mathcal P_2(\re^n)$) as the space of Borel probability measures on~$\re^n$ with finite first (resp., second) order moment with respect to the Euclidean distance, endowed with the Monge-Kantorovich distance~${\bf d_{1}}$ (resp.,~${\bf d_{2}}$) (for more details see \cite{AGS} or \cite{C}).
\vskip .2truecm
\noindent We refer to \cite{BLU} for a complete overview on the Heisenberg group~$\He^1$.
We define the two vector fields, called {\it generators} of $\He^1$,
\begin{equation}\label{vectorFields}
 X_1(x):=\left(\begin{array}{c}1 \\0 \\
 -x_2\end{array}\right)\quad
\textrm{and}
\quad
X_2(x):=\left(\begin{array}{c}0 \\1 \\x_1\end{array}\right),
\quad \forall\, x=(x_1,x_2,x_3)\in \re^3.
\end{equation}
By these vectors we define the linear differential operators, still called $X_1$ and $X_2$
\begin{equation}\label{VFD}
X_1=\partial_{x_1}-x_2\partial_{x_3},\qquad X_2=\partial_{x_2}+x_1\partial_{x_3}.
\end{equation}
Note that their {\it commutator} $[X_1,X_2]:=X_1\,X_2-X_2X_1$ verifies: $[X_1,X_2]=-2\partial_{x_3}$; hence, $X_1$, $X_2$ and $[X_1,X_2]$ span all $\re^3$.

The Heisenberg group~$\He^1$ has a group structure endowed with the following noncommutative group operation, denoted by $\oplus$: for all $x=({x}_1,{x}_2,x_3)$, $y=({y}_1,{y}_2,y_3)\in \R^{3}$,
\begin{equation*}%\label{Group_Law}
x\oplus y=(x_1, x_2, x_3)\oplus (y_1, y_2, y_3):= (x_1+y_1, x_2+y_2, x_3+y_3-x_2y_1+x_1y_2).
\end{equation*}

The fields $X_1$ and $X_2$ are left-invariant vector fields, i.e.
for all $u\in C^{\infty}(\R^{3})$ and for all fixed $y\in \R^{3}$
we have
$X_i(u(y\oplus x))=(X_iu)\,(y\oplus x),\ i=1,2.
$\\
Note that the matrix $B(x)$ defined in \eqref{Hd} is the matrix associated to the vectors $X_1$ and $X_2$. For any regular real-valued function $u$, we shall denote its horizontal gradient and its horizontal Laplacian by $D_\cH u:= (X_1u, X_2u)$ and respectively $\Delta_\cH:=X_1^2u+X_2^2u$ and we observe $D_\cH u=Du\, B(x)$ and $\Delta_\cH u=\textrm{tr}(D^2u\,BB^T)$. 
For any regular $v=(v_1,v_2):\R^{3}\to \re^2$, we denote its horizontal divergence by $\diver_{\cH}v:=X_1v_1+X_2v_2$ and we note that the left-invariance of $X_i$ ($i=1,2$) entails the left-invariance of $\diver_{\cH}$. We have: $\diver_{\cH}(D_\cH u)=\Delta_\cH u$.

The norm and the distance associated by the group law are defined as
\begin{equation*}\label{norm}
\|x\|_\cH:=((x_1^2+x_2^2)^2+x_3^2)^{1/4}, \quad d_\cH(x,y):=\|x\oplus y^{-1}\|_\cH.
\end{equation*}

\subsection{Assumptions, definitions of solution and main result}
Throughout this paper (unless otherwise explicitly stated) we shall require the following hypotheses
\begin{enumerate}
\item[(H1)]\label{H1'} the functions~$F$ and $G$ are real-valued function, continuous on ${\mathcal P}_{1}(\re^3)\times\re^3$,
\item[(H2)]\label{H2'} the map $m\to F[m](\cdot)$ is Lipschitz continuous from~${\mathcal P}_{1}(\re^3)$ to $C^{2}(\re^3)$; moreover, there exists~$C\in \mathbb R$ such that
  $$\|F[m](\cdot)\|_{C^{2}(\re^3)}, \|G[m](\cdot)\|_{C^{2}(\re^3)}\leq C,\qquad \forall m\in {\mathcal P}_{1}(\re^3);$$
\item[(H3)]\label{H4'} the distribution~$m_0:\re^3\to \re$ is a nonnegative $C^{0}$ function with compact support and $\int_{\re^3}m_0dx=1$.
\end{enumerate}
\vskip .2truecm
We now introduce our definitions of weak solution of the MFG system~\eqref{eq:MFGintrin}.
\begin{definition}\label{defsolmfg}
A couple $(u,m)$ of functions defined on~$\re^3\times [0,T]$ is a weak solution of system~\eqref{eq:MFGintrin} if
\begin{itemize}
\item[1)] $u$ belongs to $W^{1,\infty}(\re^3\times[0,T])$;
\item[2)] $m$ belongs to $C^0([0,T];{\mathcal P}_{1}(\re^3))$ and for all $ t\in [0,T]$, $m(t)$ is absolutely continuous w.r.t. the Lebesgue measure. Let $m(\cdot, t)$ denote the density of $m(t)$. The function $(x,t)\mapsto m(x,t)$ is bounded;
\item[3)] Equation~\eqref{eq:MFGintrin}-(i) is satisfied by $u$ in the viscosity sense in~$\re^3\times(0,T)$;
\item[4)] Equation~\eqref{eq:MFGintrin}-(ii) is satisfied by $m$ in the sense of distributions  in~$\re^3\times(0,T)$.
\end{itemize}
\end{definition}

\begin{remark}
From \cite[Lemma 8.1.2]{AGS}, we get that the distributional solution of \eqref{eq:MFGintrin}-(ii) stated in point 4) of the definition \ref{defsolmfg}
is automatically continuous in the sense of point 2) of the same definition.
\end{remark}
We introduce now the notion of {\it mild} solution introduced by~\cite{CC}.
This notion is reminiscent of the Lagrangian approach to MFGs (see~\cite{BCS}) and it relies on replacing probability measures on the state space with probability measures on arcs on the state space.\\
We define the set of AC arcs in $\re^3$
\begin{equation}\label{gamma}
\Gamma:=AC((0,T), \re^3)
\end{equation}
and the evaluation map $e_t: \Gamma\to \re^3$ as
\begin{equation}\label{eval}
e_t(\gamma)=\gamma(t).
\end{equation}
For any $x\in \re^3$, we define the set of horizontal arcs starting at~$x$ with an associated control law
\begin{equation*}
\mathcal A(x,t):=\{(\gamma, \alpha) : \gamma\in \Gamma,\ \gamma(t)=x,\  \alpha\in L^2([t,T], \re^2), (\gamma, \alpha) \textrm{ solves \eqref{DYNH} in $(t,T)$}\}.
\end{equation*}
Given $m_0\in{\mathcal P}_{1}(\re^3)$,  we define
$$
\mathcal P_{m_0}(\Gamma)=\{\eta\in{\mathcal P}(\Gamma):\ m_0 =e_0\# \eta\}.
$$
For any $\eta\in \mathcal P_{m_0}(\Gamma)$, $t\in[0,T]$ and $x\in \re^3$, we consider the cost
\begin{equation}\label{Jeta}
J^{\eta}_{x,t}(\alpha):=\int_t^T\left[\frac12 |\alpha(\tau)|^2+F[e_\tau\#\eta](\gamma(\tau))\right]\,d\tau+G[e_T\#\eta](\gamma(T))
\end{equation}
where $(\gamma, \alpha) \in \mathcal A(x,t)$.
For any $\eta\in \mathcal P_{m_0}(\Gamma)$ and for any $x\in \re^3$ we define the set of optimal horizontal arcs starting at~$x$
\begin{equation}\label{Gammaeta}
\Gamma^{\eta}[x]:= \{\overline\gamma:\ (\overline\gamma, \overline\alpha) \in \mathcal A(x,0):
J^{\eta}_{x,0}(\overline\alpha)= \min_{(\gamma, \alpha) \in \mathcal A(x,0)} J^{\eta}_{x,0}(\alpha)\}.
\end{equation}
\begin{definition}\label{mfgequil}
A measure $\eta\in \mathcal P_{m_0}(\Gamma)$ is a {\em MFG equilibrium} for $m_0$ if
$$supp\, \eta\subseteq \bigcup_{x\in \re^3} \Gamma^{\eta}[x].$$
\end{definition}
This means that the support of $\eta$ is contained in the set 
$$\cup_{x\in\re^3}\{\gamma\in \Gamma: \gamma(0)=x,\, \textrm{$\gamma$ is a minimizer of $J^{\eta}_{x,0}$}\}.$$

\begin{definition}\label{mild}
A couple $(u,m)\in C^0([0,T]\times \re^3)\times C^0([0,T]; \mathcal P_{1}(\re^3))$ is called {\em {mild solution}} if there exists a MFG equilibrium $\eta$ for $m_0$ such that:
\begin{itemize}
\item[i)]
$m_t =e_t\# \eta$;
\item[ii)] $u$ is given by
$$
u(x,t)= \inf_{(\gamma, \alpha) \in \mathcal A(x,t)}\int_t^T\left[\frac12 |\alpha(\tau)|^2+F[e_\tau\#\eta](\gamma(\tau))\right]\,d\tau+G[e_T\#\eta](\gamma(T))
.$$
\end{itemize}
\end{definition}
\vskip .2truecm
We now state the main result of this paper.
\begin{theorem}\label{thm:main_illi}
Under the above assumptions:
\begin{itemize}
\item[i)] System \eqref{eq:MFGintrin} has a solution $(u,m)$;
\item[ii)] $(u,m)$ is a mild solution.
\end{itemize}
\end{theorem}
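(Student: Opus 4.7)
The plan is to construct $(u,m)$ as a limit of smooth solutions of a triply regularized MFG system, and then to establish the mild solution property via the superposition principle \cite[Theorem~8.2.1]{AGS}. Following the roadmap indicated in the Introduction, I first replace $B(x)$ by a \emph{completion} $B^\varepsilon(x)\in M^{3\times 3}$ with $\det\!\big(B^\varepsilon(B^\varepsilon)^T\big)>0$ on $\re^3$, add a Euclidean vanishing viscosity $-\varepsilon\Delta u$ to the HJ equation and $+\varepsilon\Delta m$ to the continuity equation, and truncate the coefficients of $B^\varepsilon$ outside a large Euclidean ball to obtain globally bounded Lipschitz coefficients. For every parameter pair $(\varepsilon,R)$ this yields a uniformly parabolic second order MFG system on $\re^3$, and a classical smooth solution $(u^{\varepsilon,R},m^{\varepsilon,R})$ is produced by Schauder's fixed point theorem applied to the map $m\mapsto u\mapsto m$ on $C^0([0,T];\mathcal P_1(\re^3))$: compactness comes from parabolic regularization and tightness, continuity from (H1)--(H2).

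\textbf{A priori estimates and removal of the truncation.} Using (H2)--(H3) together with the optimal control analysis of Section~\ref{OC}, I obtain bounds on $u^{\varepsilon,R}$ (Lipschitz and semiconcave in $x$, uniformly in the parameters) and, via Theorem~\ref{prp:m}, a uniform $L^\infty$ estimate and tightness for $m^{\varepsilon,R}$. The key point is that, thanks to the sublinear growth of the coefficients of $B^\varepsilon$, to the uniform Lipschitz estimate on $u^{\varepsilon,R}$, and to a Gr\"onwall argument along characteristics of the continuity equation, $\operatorname{supp} m^{\varepsilon,R}_t$ stays inside a fixed compact set independent of $R$. Hence the truncation can be removed first, and one is reduced to the viscous problem on $\re^3$ with the original (unbounded) completion.

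\textbf{Passage to the vanishing viscosity limit and weak solution.} Letting $\varepsilon\to 0^+$, stability of viscosity solutions of the HJ equation and of distributional solutions of the continuity equation, combined with Arzel\`a--Ascoli for $u^\varepsilon$ and compactness of $\{m^\varepsilon_t\}$ in $\mathcal P_1(\re^3)$ (from tightness and equicontinuity in time given by Theorem~\ref{prp:m}), produce in the limit a couple $(u,m)$ with $u\in W^{1,\infty}$ viscosity solution of \eqref{eq:MFGintrin}-(i), $m\in C^0([0,T];\mathcal P_1(\re^3))$ bounded density distributional solution of \eqref{eq:MFGintrin}-(ii), and satisfying the initial and terminal conditions. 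This verifies Definition~\ref{defsolmfg} and proves part~(i).

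\textbf{Mild solution via the superposition principle.} For part~(ii), the completion $B^\varepsilon$ is essential: non-degeneracy of $B^\varepsilon(B^\varepsilon)^T$ combined with the semiconcavity of $u^\varepsilon$ yields uniqueness of the optimal trajectory of $J^{m^\varepsilon}_{x,0}$ for $m_0$-a.e. starting point, hence a Borel selection $x\mapsto \gamma^\varepsilon_x$ of optimal horizontal arcs and a measure $\eta^\varepsilon:=(x\mapsto \gamma^\varepsilon_x)\# m_0\in \mathcal P_{m_0}(\Gamma)$ which by construction satisfies $m^\varepsilon_t=e_t\#\eta^\varepsilon$. Uniform Lipschitz estimates on $u^\varepsilon$ bound the optimal controls in $L^2$, giving tightness of $(\eta^\varepsilon)$ in $\mathcal P(\Gamma)$; a limit measure $\eta\in \mathcal P_{m_0}(\Gamma)$ then exists with $m_t=e_t\#\eta$, and lower semicontinuity of the action together with Lipschitz continuity of $m\mapsto F[m]$ from (H2) and continuity of $G$ from (H1) allow to pass optimality to the limit, identifying $\eta$ as an MFG equilibrium in the sense of Definition~\ref{mfgequil}. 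The dynamic programming principle for the limit control problem then yields the representation formula of $u$ required in Definition~\ref{mild}. The chief obstacle is precisely this transfer of optimality: since $\det B(x)B(x)^T=0$ on all of $\re^3$, uniqueness of optimal trajectories is lost at the limit, so one cannot argue pointwise on controls and must instead work at the level of the measures $\eta^\varepsilon$, invoking \cite[Theorem~8.2.1]{AGS} to represent the limit continuity equation as a superposition of horizontal arcs of~\eqref{DYNH}.
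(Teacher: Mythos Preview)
Your plan for part~(i) is broadly aligned with the paper, but the paper keeps the three regularizations \emph{separate}: a completion parameter~$\varepsilon$, a viscosity parameter~$\sigma$, and a truncation parameter~$N$. For each fixed $\varepsilon>0$ the paper first lets $\sigma\to 0$ and then $N\to\infty$ (Section~\ref{sect:c_illi}), applies Schauder's fixed point at the \emph{deterministic} $\varepsilon$-level (Proposition~\ref{mainproe}), and only afterwards sends $\varepsilon\to 0$. Your conflation of viscosity and completion into a single~$\varepsilon$ creates a real problem for part~(ii): at your $\varepsilon$-level the system is still parabolic, so there is no deterministic optimal trajectory $\gamma^\varepsilon_x$ to push forward, and the sentence ``non-degeneracy of $B^\varepsilon(B^\varepsilon)^T$ combined with the semiconcavity of $u^\varepsilon$ yields uniqueness of the optimal trajectory of $J^{m^\varepsilon}_{x,0}$'' does not typecheck as written.

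More importantly, your route to part~(ii) --- constructing $\eta^\varepsilon$ from optimal trajectories, passing $\eta^\varepsilon\to\eta$, and then transferring optimality in the limit --- is genuinely different from the paper's and considerably more laborious. The paper bypasses the transfer-of-optimality issue entirely: it applies the superposition principle \cite[Theorem 8.2.1]{AGS} \emph{directly to the limit measure}~$m$, which solves \eqref{eq:MFGintrin}-(ii) with drift $-Du\,BB^T$, obtaining a measure~$\eta$ supported on curves that solve $\dot\gamma=-Du(\gamma,t)B(\gamma)B(\gamma)^T$. The key point you seem to have missed is that Lemma~\ref{lemma:OSeps}(a) --- the optimal synthesis statement --- is proved for every $\varepsilon\in[0,1]$, \emph{including} $\varepsilon=0$: any absolutely continuous curve solving that ODE (with $u$ differentiable along it a.e.) is automatically optimal for $u(\gamma(0),0)$, and no uniqueness of optimal trajectories is required. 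Hence~$\eta$ is a MFG equilibrium in one stroke, and Definition~\ref{mild} follows. Your approach would instead need to show that weak limits of $B^\varepsilon$-admissible arcs are $B$-horizontal (controlling the extra component $\alpha_3^\varepsilon=\varepsilon p_3$) and to run a $\Gamma$-convergence/lower-semicontinuity argument for the costs; neither step is carried out, and neither is needed once one realizes that Lemma~\ref{lemma:OSeps}(a) already covers the degenerate case.
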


\begin{remark}
\begin{itemize}
\item [-]As a matter of fact, in the proof of this theorem we get that any solution in the sense of Definition \ref{defsolmfg} is a mild solution.
\item [-]Uniqueness holds under classical hypothesis on the monotonicity of $F$ and $G$ as in \cite{C}.
\end{itemize}
\end{remark}

\subsection{The $\epsilon$-approximating problem}
In order to prove Theorem~\ref{thm:main_illi}, it is expedient to introduce the following approximating problems for $\varepsilon\in(0,1]$

\begin{equation}\label{eq:MFGe}
\left\{\begin{array}{lll}
(i)&\quad-\partial_t u+H^{\varepsilon}(x, Du)=F[m(t)](x)&\qquad \textrm{in }\re^3\times (0,T)\\
(ii)&\quad\partial_t m-\diver  (m\, \partial_pH^{\varepsilon}(x, Du))=0&\qquad \textrm{in }\re^3\times (0,T)\\
(iii)&\quad m(x,0)=m_0(x), u(x,T)=G[m(T)](x)&\qquad \textrm{on }\re^3,
\end{array}\right.
\end{equation}
where
\begin{equation}\label{He}
H^{\varepsilon}(x,p)=\frac{1}{2}|pB^{\varepsilon}(x)|^2\qquad\textrm{with}\qquad
 B^\varepsilon:=  \begin{pmatrix}
      \!\!&1& 0&0&\!\\
     \!\!&0&1&0&\!\\
      \!\!&-x_2&x_1&\epsilon&
\end{pmatrix}.
    \end{equation}
Hence, explicitly, the Hamiltonian and the drift are respectively
\begin{eqnarray*}
H^{\varepsilon}(x,p)&=&\frac{1}{2}((p_1-x_2p_3)^2+(p_2+x_1p_3)^2+(\epsilon p_3)^2)\\
\partial_pH^{\varepsilon}(x,p)&=& p B^{\varepsilon}(x)(B^{\varepsilon}(x))^T=(p_1-x_2p_3, p_2+x_1p_3,x_1p_2-x_2p_1+(x_1^2+x_2^2+\varepsilon^2)p_3)
\end{eqnarray*}
while the dynamics of the generic player at point~$x$ at time~$t$ becomes
\begin{equation}\label{DYNe}
x_1'(s)=\alpha_1(s),\quad x_2'(s)=\alpha_2(s),\quad x_3'(s)=-x_2(s)\alpha_1(s)+x_1(s)\alpha_2(s)+\epsilon \alpha_3(s)
\end{equation}
where the control $\alpha=(\alpha_1, \alpha_2, \alpha_3)$ is chosen in $L^2([t,T];\R^3)$ for minimizing the cost~\eqref{Jgen}.

We can obtain existence, representation formula and suitable estimates of a solution to problem~\eqref{eq:MFGe} which will allow us to prove Theorem \ref{thm:main_illi} letting $\varepsilon\to0^+$. 
These properties are stated in the following Proposition whose proof, together with the proof of Theorem \ref{thm:main_illi},
is postponed in section~\ref{sect:pr_illi}.
\begin{proposition}\label{mainproe}
For any fixed $\epsilon\in(0,1]$ there exists a solution $(u_{\varepsilon}, m_{\varepsilon})$ of the system
\eqref{eq:MFGe} such that
\begin{equation}
\label{ambrosioe}
\int_{\re^3}\phi(x) \, dm_\varepsilon(t)=\int_{\re^3}\phi(\gamma_{\varepsilon,x}(t))\,m_0(x)\, dx \qquad \forall \phi\in C^0_0(\R^3), \, \forall t\in[0,T]
\end{equation}
where, for a.e. $x\in\re^3$,  $\gamma_{\varepsilon,x}$ is the unique solution to
\begin{equation}\label{flowe}
x'(s)= -Du_{\varepsilon}(x(s),s)B^{\epsilon}(x(s))(B^{\epsilon}(x(s)))^T,\quad x(0)=x.
\end{equation}
Moreover, there exists a positive constant~$C$ (independent of~$\epsilon$) such that
\begin{itemize}
\item[a)] $\|u_\varepsilon\|_\infty\leq C$, $\|D u_\varepsilon\|_\infty\leq C$, $|\partial_t u_\varepsilon(t,x)|\leq C (1+|x_1|^2+|x_2|^2)$, $D^2 u_\varepsilon\leq C$,
\item[b)] $\|m_\varepsilon\|_\infty\leq C$, ${\bf d}_1(m_\varepsilon(t_1),m_\varepsilon(t_2))\leq C|t_2-t_1|^{1/2}$, $\int_{\re^3}|x|^2m_\varepsilon(x,t)dx\leq C$.
\end{itemize}
\end{proposition}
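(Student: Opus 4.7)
The plan is to work with fixed $\varepsilon\in(0,1]$ throughout, exploiting that $B^{\varepsilon}(B^{\varepsilon})^T$ is now pointwise invertible (so $H^{\varepsilon}(x,\cdot)$ is strictly convex in $p$) while remembering that the coefficients of $B^{\varepsilon}$ still grow at infinity. I proceed by a double approximation: truncation of the unbounded entries of $B^{\varepsilon}$ plus a vanishing Euclidean viscosity. For parameters $\sigma\in(0,1]$ and $R\ge 1$, I replace the entries $-x_2,x_1$ of $B^{\varepsilon}$ by smooth truncations $-\chi_R(x_2),\chi_R(x_1)$ to obtain a matrix $B^{\varepsilon,R}$ with bounded $C^2$ coefficients, and I add $-\sigma\Delta u$ to equation $(i)$ and $+\sigma\Delta m$ to equation $(ii)$ of \eqref{eq:MFGe}. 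The resulting second-order MFG system has smooth bounded coefficients and falls within the classical Lasry--Lions framework, so a Schauder fixed point on $\{m\in C([0,T];\mathcal P_2(\re^3))\,:\,\int|x|^2\,dm_t\le K\}$ produces a smooth solution $(u^{\varepsilon,\sigma,R},m^{\varepsilon,\sigma,R})$.

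The heart of the argument consists in the uniform estimates in items (a)--(b), independent of $\sigma$ and $R$. The $L^\infty$ bound on $u$ will follow from comparing with the cost of a constant trajectory together with the uniform bound on $F,G$. The Lipschitz bound on $Du$, which I expect to be the most delicate step, is obtained by coupling two nearly optimal trajectories starting from $x$ and $x+h$, exploiting the left-invariance of the Heisenberg group to absorb the quadratic entries of $B^\varepsilon$ and estimating the additional control cost linearly in $|h|$. Semiconcavity $D^2 u\le CI$ is then the standard value-function argument based on the $C^2$ bound on $F,G$ and convexity of $H^{\varepsilon}$ in $p$; combined with the HJ equation it gives the pointwise bound on $|\partial_t u|$. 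For the density, I represent $m^{\varepsilon,\sigma,R}$ as the push-forward of $m_0$ along the regularized characteristic flow; the semiconcavity of $u$ produces a lower bound on the Jacobian of that flow, hence the $L^\infty$ bound on the density. The moment bound and the $d_1$-H\"older continuity of $t\mapsto m_t$ follow from the explicit form of $\partial_p H^{\varepsilon}$: its first two components are affine in $x_1,x_2$ and the third is quadratic, so a Gr\"onwall argument shows that characteristics emanating from $\mathrm{supp}\,m_0$ stay in a fixed compact set on $[0,T]$.

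With the estimates in hand, I pass to the limit as $(\sigma,R)\to(0,+\infty)$: Arzel\`a--Ascoli for $(u^{\varepsilon,\sigma,R})$, and Prokhorov plus the $d_1$-H\"older estimate for $(m^{\varepsilon,\sigma,R})$, yield a limit couple $(u_\varepsilon,m_\varepsilon)$ satisfying equation $(i)$ of \eqref{eq:MFGe} in the viscosity sense and $(ii)$ in the sense of distributions, and inheriting all the estimates. The representation formula \eqref{ambrosioe} will then come from the superposition principle \cite[Theorem~8.2.1]{AGS} applied to the continuity equation: it produces a measure $\eta$ on $\Gamma$ with $m_\varepsilon(t)=e_t\#\eta$ concentrated on AC solutions of \eqref{flowe}. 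The strict positivity of $B^{\varepsilon}(B^{\varepsilon})^T$ turns \eqref{flowe} into a non-degenerate ODE with semiconcave drift, so its solution is unique for $m_0$-almost every initial datum; the disintegration of $\eta$ along $e_0$ is therefore a Dirac mass at $\gamma_{\varepsilon,x}$, which yields \eqref{ambrosioe}.

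The two hard points will be the Lipschitz estimate on $u_\varepsilon$ and the $L^\infty$ bound on $m_\varepsilon$: both demand that the interplay between the $C^2$ regularity of $F,G$ and the quadratic growth of the entries of $B^{\varepsilon}$ be handled so that constants do not depend on $R$. This essentially forces one to rely on the confinement of characteristics in a fixed compact set (from the compact support of $m_0$ plus Gr\"onwall) rather than on any a priori ellipticity-type argument in the whole $\re^3$.
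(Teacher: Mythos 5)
Your overall plan (truncation plus vanishing viscosity, uniform estimates, Schauder fixed point, superposition principle) shares the paper's ingredients, but the architecture and several technical steps diverge, and two of your choices create genuine difficulties.

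\textbf{Where the Schauder fixed point lives.} You run the Lasry--Lions fixed point on the fully regularized $(\sigma,R)$-system and then send $(\sigma,R)\to(0,+\infty)$. The paper instead first solves the HJ problem \eqref{HJei} and the continuity problem \eqref{continuitye_illi} separately for a \emph{fixed} $\overline m$ (the $\sigma$- and $N$-limits are taken \emph{inside} the continuity-equation step, Theorem~\ref{prp:m} and Lemma~\ref{1802:thm1}), and only then performs the Schauder fixed point directly on the $\varepsilon$-system, in the set $\mathcal C=\{m\in C^{1/2}([0,T],\mathcal P_1):m(0)=m_0\}$. Both architectures are admissible in principle, but the paper's ordering means the HJ side never sees the truncation. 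That turns out to matter, as follows.

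\textbf{Truncating $B^\varepsilon$ in the HJ equation is a real gap.} You replace $-x_2,x_1$ by $-\chi_R(x_2),\chi_R(x_1)$ in \emph{both} equations of the regularized system. The Lipschitz estimate on $Du$, which you correctly flag as delicate, is in the paper (Lemma~\ref{LLS} and Lemma~\ref{1802:lemma1}) obtained by starting the \emph{same} control from $x$ and from $y$, and computing the difference of trajectories explicitly. That computation works because the entries of $B^\varepsilon$ are affine in $(x_1,x_2)$: for the third component, the terms involving $\alpha$ and the trajectory telescope cleanly, leaving a contribution of order $|x-y|(1+\|\alpha\|_{L^2})$. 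Once you replace $x_1,x_2$ by nonlinear cutoffs $\chi_R(x_1),\chi_R(x_2)$ this cancellation is lost, and the ``left-invariance'' you invoke is unavailable---neither $B^{\varepsilon}$ nor $B^{\varepsilon,R}$ is the matrix of a group of translations, and the Heisenberg group action certainly does not commute with $\chi_R$. The paper sidesteps this entirely by keeping $H^{\varepsilon}$ \emph{untruncated} in the viscous HJ equation \eqref{eq:MFGeN}-(i): the value function there is a stochastic control problem with drift $\alpha\,B^{\varepsilon}(Y)^T$ of linear growth, the coupling argument from the deterministic case carries over unchanged (the Brownian increments cancel in the difference), and one gets Lipschitz continuity, the quadratic bound on $|\partial_t u^\sigma|$, and semiconcavity, all independent of $\sigma$, $N$ and $\varepsilon$. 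If you want your route to work you must either leave the HJ Hamiltonian untruncated (as the paper does) or produce a substitute argument for the Lipschitz and semiconcavity bounds that survives a nonlinear cutoff; as written, the proposal does not.

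\textbf{The $L^\infty$ bound on the density.} You propose to read the bound on $m^{\varepsilon,\sigma,R}$ from a Jacobian estimate for the characteristic flow. But with $\sigma>0$ the Fokker--Planck equation \eqref{eq:MFGeN}-(ii) is genuinely second-order; its solution is the law of an SDE, not the pushforward of $m_0$ by a deterministic flow, so the Jacobian argument does not apply at that level. The paper instead uses the parabolic comparison principle (Lemma~\ref{1802:lemma3}): the key step is the one-sided bound $\diver\!\left(Du\,B^{\varepsilon,N}(B^{\varepsilon,N})^T\right)\le k_N$, which follows from semiconcavity of $u^\sigma$ because this divergence is a sum of second-order directional derivatives along the columns of $B^{\varepsilon,N}$. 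The $N$-dependence of the resulting bound is then harmless because the compact support of $m_0$ and the a priori bound on characteristics show $m_N=m_{N_0}$ for all $N\ge N_0$ (Lemma~\ref{1802:thm1}). You would need to replace your Jacobian step by something like this.

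\textbf{Uniqueness of the flow in \eqref{flowe}.} Saying the ODE has a ``semiconcave drift'' and is therefore uniquely solvable for a.e.\ initial datum is imprecise: the vector field $-Du_\varepsilon\,B^\varepsilon(B^\varepsilon)^T$ is not one-sided Lipschitz merely because $D^2u_\varepsilon\le C$, since the matrix $B^\varepsilon(B^\varepsilon)^T(x)$ also depends on $x$ and its derivatives enter the computation. The paper's argument is the optimal synthesis (Proposition~\ref{OS_e} and Lemma~\ref{lemma:OSeps}): solutions of \eqref{flowe} are optimal trajectories, and at a point of differentiability of $u_\varepsilon$ the Pontryagin system together with the invertibility of $B^\varepsilon(B^\varepsilon)^T$ forces the costate, hence the trajectory, to be unique. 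This is the step where $\varepsilon\neq 0$ is truly used, and your sketch should be replaced by that argument.

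In short: your plan is recognizably in the same family as the paper's, but (i) you should not truncate the HJ equation (or you lose the uniform Lipschitz/semiconcavity estimates), (ii) the $L^\infty$-bound on $m$ must be obtained by a parabolic comparison argument rather than a Jacobian bound when $\sigma>0$, and (iii) the a.e.\ uniqueness for \eqref{flowe} requires the Pontryagin/optimal-synthesis argument, not a semiconcavity-of-the-drift argument.
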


%%%%%%%%%%%%%%%%%%%%%%%%

\section{Formulation of the optimal control problem}\label{OC}
In this section, we study equation~\eqref{eq:MFGe}-(i) and the corresponding optimal control problem for any $\varepsilon\in [0,1]$ (note that here we also cope the case $\varepsilon=0$).
We shall show that the value function of this control problem solves~\eqref{eq:MFGe}-(i), is  Lipschitz continuous and semiconcave in~$x$. Moreover, for $\varepsilon\ne0$, we establish an optimal synthesis result exploiting the fact that the matrix $B^\varepsilon(B^\varepsilon)^T$ is invertible.
Throughout this section, since $\varepsilon$ is a fixed parameter, we omit it when it is not essential and we assume the following hypothesis
\begin{hypothesis} \label{BasicAss_illi}
$f\in C^{0}([0,T],C^2(\re^3))$, $g\in C^2(\re^3)$ and there exists a constant $C$ such that
$$\|f(\cdot,t)\|_{C^2(\re^3)} + \|g\|_{C^2(\re^3)} \leq C,\qquad\forall t\in[0,T].$$
\end{hypothesis}
\begin{definition}\label{def:OCDe} We consider the following optimal control problem
\begin{equation}\label{def:OC_illi}
\text{minimize } J_{x,t}(\alpha):
=\displaystyle\int_t^T\dfrac12|\alpha(s)|^2+f( x(s),s)\,ds+g(x(T))
\end{equation}
subject to $(x(\cdot), \alpha(\cdot))\in \mathcal A_{\varepsilon}(x,t)$, where
\begin{equation} \mathcal A_{\varepsilon}(x,t):=\left\{(x(\cdot), \alpha(\cdot))\in AC([t,T]; \R^3)\times L^2([t,T];\re^3):\, \textrm{\eqref{DYNe} holds a.e. with } x(t)= x\right\}.
\end{equation}
A couple $(x(\cdot),\alpha(\cdot))\in\mathcal A_{\varepsilon}(x,t)$ is said to be admissible. We say that $x^*(\cdot)$ is an optimal trajectory if there is a control $\alpha^*(\cdot)$ such that $(x^*(\cdot),\alpha^*(\cdot))\in  \mathcal A_{\varepsilon}(x,t)$ is optimal for the optimal control problem in~\eqref{def:OCDe}; in particular, since $\varepsilon$ is fixed, we do not write explicitly that $(x^*,\a^*)$ depends on $\varepsilon$. Also, we shall refer to the system~\eqref{DYNe} as to the dynamics of the optimal control problem in~\eqref{def:OCDe}.
\end{definition}
\begin{remark} \label{2.3} 
Hypothesis~\ref{BasicAss_illi} ensures that, for any $(x,t)\in \re^3\times(0,T)$, the optimal control problem in definition \ref{def:OCDe} admits a solution $(x^*(\cdot),\alpha^*)$ thanks to the LSC with respect to the weak $L^2$ topology. Moreover, just testing $J_{x,t}(\alpha^*)$ against $J_{x,t}(0)$, we get
\begin{equation}\label{bd:alpa_L2}
\|\alpha^*\|_{L^2(t,T)}\leq C_1:=C[(T-t)+1],
\end{equation}
where $C$ is the constant introduced in Hypothesis~\ref{BasicAss_illi} (note that  $C_1$ is independent of $\varepsilon$).
In particular, by H\"older inequality,
\begin{equation}\label{bd:x_1,x_2}
x^*\in C^{1/2}([t,T], \re^3),
\end{equation}
with an $1/2$-H\"older constant of the type $C(1+|x_1|+|x_2|)$, with $C$ in independent of $\varepsilon$.
\end{remark}

\begin{definition} The value function for the cost $J_{x,t}$ defined in \eqref{def:OCDe} is
\begin{equation}\label{repre}
u_\varepsilon(x,t):=\inf\left\{ J_{x,t}(\alpha):\, (x(\cdot),\alpha(\cdot))\in \mathcal A_{\varepsilon}(x,t)\right\}.
\end{equation}
An optimal couple $(x^*(\cdot), \alpha^*)$ for the control problem in definition \ref{def:OCDe} is also said to be optimal for $u(x,t)$.
\end{definition}

The following proposition ensures that we can restrict our study on locally uniformly bounded controls.

\begin{proposition}\label{boundalfa}
 Let $u_\varepsilon$ be the value function introduced in \eqref{repre}. Then, there exists a constant $C_2$ (depending only on $T$ and on the constant $C$ of Hypothesis \ref{BasicAss_illi}) such that there holds
\begin{equation}\label{restric}
u_\varepsilon(x,t)=\inf \{J_{x,t}(\alpha):\ (x(\cdot), \alpha) \in\mathcal A_\varepsilon(x,t),\ \|\alpha\|_{\infty}\leq C_2(1+|x_1|+|x_2|)\}
\end{equation}
for any $x=(x_1,x_2,x_3)\in \re^3$ and $t\in[0,T]$.
\end{proposition}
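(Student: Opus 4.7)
The plan is to apply Pontryagin's maximum principle to an optimal pair and exploit the triangular structure of $B^\varepsilon$ in order to bound the optimal control pointwise.

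First, fix $(x,t)\in\re^3\times[0,T]$ and let $(x^*,\alpha^*)\in \mathcal A_\varepsilon(x,t)$ be an optimal pair, whose existence is ensured by Remark~\ref{2.3}. Since the Lagrangian $\tfrac12|\alpha|^2+f(y,s)$ is smooth and strictly convex in $\alpha$ and the dynamics \eqref{DYNe} is affine in $\alpha$, the classical Pontryagin maximum principle provides an absolutely continuous costate $p:[t,T]\to\re^3$ satisfying the terminal condition $p(T)=Dg(x^*(T))$, the minimization condition
\begin{equation*}
\alpha^*(s)=-B^\varepsilon(x^*(s))^T p(s)\qquad\text{for a.e. } s\in[t,T],
\end{equation*}
and the adjoint equation which, by a direct computation from the explicit form~\eqref{He} of $B^\varepsilon$, reads
\begin{equation*}
\dot p_1=-p_3\alpha^*_2-\partial_{x_1}f,\q \dot p_2=p_3\alpha^*_1-\partial_{x_2}f,\q \dot p_3=-\partial_{x_3}f.
\end{equation*}

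The key observation is that the equation for $p_3$ is decoupled from $\alpha^*$, so Hypothesis~\ref{BasicAss_illi} yields $\|p_3\|_\infty\leq C(1+T)$ directly, independently of $\varepsilon$. Inserting this bound into the equations for $p_1$ and $p_2$ and using $\|\alpha^*\|_{L^2}\leq C_1$ from \eqref{bd:alpa_L2} together with the Cauchy--Schwarz inequality gives $\|p\|_\infty\leq C$ for a constant $C$ still independent of $\varepsilon\in(0,1]$.

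To translate the bound on $p$ into the desired bound on $\alpha^*$, I note that $x_1^{*\prime}=\alpha^*_1$ and $x_2^{*\prime}=\alpha^*_2$ combined with $\|\alpha^*\|_{L^2}\leq C_1$ give $|x^*_i(s)-x_i|\leq C$ for $i=1,2$, so $\|B^\varepsilon(x^*(s))\|\leq C(1+|x_1|+|x_2|)$ uniformly in $\varepsilon\in(0,1]$. Therefore
\begin{equation*}
|\alpha^*(s)|\leq \|B^\varepsilon(x^*(s))^T\|\,\|p\|_\infty\leq C_2(1+|x_1|+|x_2|),
\end{equation*}
which shows that $\alpha^*$ already lies in the restricted admissible class, and the two infima in \eqref{restric} coincide.

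The main technical point is obtaining a pointwise---as opposed to merely integral---bound on $\alpha^*$ despite the non-coercive structure of the problem; the particular form of $B^\varepsilon$ (the third column is constant and the entries depend on $(x_1,x_2)$ but not on $x_3$) is exactly what makes the adjoint system triangular and allows the argument to close with constants independent of $\varepsilon$.
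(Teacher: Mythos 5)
Your proposal is correct, but it follows a genuinely different route from the paper. The paper's own proof (borrowed from \cite[Theorem~2.1]{BCP}) is a direct truncation argument: given an optimal $\alpha$, build $\alpha^\mu$ by setting $\alpha=0$ on the set $I_\mu=\{|\alpha|>\mu\}$, estimate the induced change in the trajectory, and show that the change in cost $J_{x,t}(\alpha^\mu)-J_{x,t}(\alpha)$ is strictly negative if $I_\mu$ has positive measure once $\mu>2K(L_f(T-t)+L_g)(1+|x_1|+|x_2|+\varepsilon)$. This contradicts optimality and gives the pointwise bound without invoking the Maximum Principle. Your argument instead uses the PMP (which the paper establishes only afterwards, in Proposition~\ref{prop:pontriagind}, citing \cite[Theorem~22.17]{Cla}) and exploits the triangular structure of the adjoint system: $\dot p_3$ is decoupled, giving $\|p_3\|_\infty\leq C$; then $\dot p_1,\dot p_2$ involve only $p_3$ and $\alpha^*\in L^2$, giving $\|p\|_\infty\leq C$ by Cauchy--Schwarz; and finally $|x^*_i(s)-x_i|\leq\|\alpha^*\|_{L^2}\sqrt{T}$ bounds $B^\varepsilon(x^*(s))$, so $\alpha^*=(B^\varepsilon)^Tp$ (up to sign) is pointwise bounded. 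The two proofs are logically independent, and yours reverses the paper's ordering (PMP before the $L^\infty$ bound rather than after). This is legitimate — Clarke's maximum principle applies to $L^2$ controls for this control-affine problem with coercive running cost, with Lipschitz rank $k(t)=|\alpha^*(t)|\in L^1$ — but you should cite that precise version rather than appeal to "the classical PMP," since the dynamics are bilinear and the control is unconstrained and a priori only in $L^2$; a careless reader might object. The paper's truncation proof buys elementarity and self-containedness at this point of the exposition; yours buys a shorter argument at the cost of front-loading the PMP machinery, and has the merit of making explicit exactly which structural feature of $B^\varepsilon$ (the last column constant, entries depending only on $(x_1,x_2)$) produces the bound.
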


\begin{proof}
The idea of the proof is borrowed from \cite[Theorem 2.1]{BCP}. For $x=(x_1,x_2,x_3)\in \re^3$ and $t\in[0,T]$, let $\alpha$ be an optimal control for $u(x,t)$.
For $\mu>0$, let $I_{\mu}:=\{ s\in(t,T): |\alpha(s)|> \mu\}$.
Define 
\begin{equation}\label{alfamu}
\alpha^{\mu}(s)=
\left\{\begin{array}{ll}
\alpha(s) &\text { if }|\alpha(s)|\leq \mu,\\
0 &\text { if }|\alpha(s)|> \mu.
\end{array}\right.
\end{equation}
Let $x^{\mu}(s)$ be the trajectory starting from $x$ associated to the control~$\alpha^{\mu}(s)$.
We claim that 
\begin{equation}\label{claimmu}
|x^{\mu}(s)-x(s)|\leq K(1+|x_1|+|x_2|+\varepsilon)\int_{I_{\mu}} |\alpha(\tau)|d\tau \qquad\forall s\in[t,T]
\end{equation}
where $K$ is a constant depending only on $C_1$ (see \eqref{bd:alpa_L2}) and $T$. Actually, for the first two components of $x^{\mu}(s)-x(s)$ we have 
\begin{equation}\label{primedue}
\left| x_i^{\mu}(s)-x_i(s)\right|\leq \int_t^s \left|\alpha_i^{\mu}(\tau)-\alpha_i(\tau)\right|d\tau= \int_{I_{\mu}} |\alpha_i(\tau)|d\tau \qquad\forall s\in[t,T],\ i=1,2.
\end{equation}
For the third component, there holds
\begin{eqnarray*}
x_3^{\mu}(s)-x_3(s)&=&\int_t^s \left[-x_2^{\mu}(\tau)\alpha_1^{\mu}(\tau)+x_1^{\mu}(\tau)\alpha_2^{\mu}(\tau)+ x_2(\tau)\alpha_1(\tau)-x_1(\tau)\alpha_2(\tau)\right.\\
&&\left.+\varepsilon (\alpha_3^{\mu}(\tau)-\alpha_3(\tau))\right]d\tau\\
&=&\int_t^s \left[(x_2(\tau)-x_2^{\mu}(\tau))\alpha_1^{\mu}(\tau)+ x_2(\tau)(\alpha_1(\tau)-\alpha_1^{\mu}(\tau)) \right.\\ 
&&\left. + (x_1^{\mu}(\tau)-x_1(\tau))\alpha_2^{\mu}(\tau)  + x_1(\tau)(\alpha_2^{\mu}(\tau)-\alpha_2(\tau))+\varepsilon (\alpha_3^{\mu}(\tau)-\alpha_3(\tau))\right] d\tau.
\end{eqnarray*}
Hence from \eqref{bd:x_1,x_2} and \eqref{primedue}, we infer 
\begin{eqnarray*}
|x_3^{\mu}(s)-x_3(s)|&\leq& \int_{I_{\mu}} |\alpha_2(\tau)|d\tau\int_t^s |\alpha_1^{\mu}(\tau)|d\tau + [|x_2|+C_1  (T-t)^{1/2}]
\int_{I_{\mu}} |\alpha_1(\tau)|d\tau\\
&&+\int_{I_{\mu}} |\alpha_1(\tau)|d\tau\int_t^s |\alpha_2^{\mu}(\tau)|d\tau
+ [|x_1|+C_1 (T-t)^{1/2}]
\int_{I_{\mu}} |\alpha_2(\tau)|d\tau\\
&&+\varepsilon\int_{I_{\mu}} |\alpha_3(\tau)|d\tau.
\end{eqnarray*}
Moreover, by H\"older inequality and \eqref{bd:alpa_L2}, we have
\[
\int_t^s |\alpha_i^{\mu}(\tau)|d\tau\leq \sqrt{s-t}\|\alpha\|_2\leq C_1\sqrt{T-t}, \ i=1,2.
\]
Replacing the last inequality in the previous one, we accomplish the proof of the claim~\eqref{claimmu}.

Now, the definition of the cost $J_{x,t}(\alpha)$ in \eqref{def:OC_illi} and the Lipschitz continuity of $f$ and $g$ yield
\begin{eqnarray}
&&J_{x,t}(\alpha^{\mu})- J_{x,t}(\alpha)=\nn\\
&&=\displaystyle\int_t^T\dfrac12|\alpha^{\mu}(s)|^2+f( x^{\mu}(s),s)\,ds+g(x^{\mu}(T))-
\displaystyle\int_t^T\dfrac12|\alpha(s)|^2+f( x(s),s)\,ds-g(x(T))\nn\\
&&\leq-\int_{I_{\mu}}\dfrac12|\alpha(s)|^2ds+ 
L_f\int_t^T|x^{\mu}(s)-x(s)|ds+L_g|x^{\mu}(T)-x(T)|\nn\\
&&\leq \int_{I_{\mu}}\left(-\dfrac12|\alpha(s)|^2+ K(L_f (T-t)+L_g)(1+|x_1|+|x_2|+\varepsilon)|\alpha(s)|\right) ds,\nn
\end{eqnarray}
where the last inequality comes from \eqref{claimmu}.
Hence, if $I_\mu$ has positive measure for $\mu>2 K(L_f (T-t)+L_g)(1+|x_1|+|x_2|+\varepsilon)$, the last integrand is negative for every $s\in I_\mu$ which contradicts the optimality of $\alpha$. This implies that these $I_\mu$ have null measure and, in particular, $\|\alpha\|_\infty\leq 2 K(L_f (T-t)+L_g)(1+|x_1|+|x_2|+\varepsilon)$. Since the choice of $\mu$ is independent of $\varepsilon\in[0,1]$, we get the result.
\end{proof}

\subsection{Necessary conditions and regularity for the optimal trajectories}
The application of the Maximum Principle (see \cite[Theorem 22.17]{Cla})
yields the following necessary conditions.
\begin{proposition}\label{prop:pontriagind}
Let $(x^*, \alpha^*)$ be optimal for the optimal control problem in \eqref{def:OC_illi}. 
Then, there exists an arc $p\in AC([t,T]; \re^3)$, hereafter called the costate,  such that
\begin{enumerate}
\item The pair
$(x^*, p)$ satisfies the system of differential equations for a.e. $s\in [t,T]$
\begin{equation}\label{tage}
\left\{
\begin{array}{ll}
\quad x_1'= p_1-x_2p_3\\
\quad x_2'= p_2+x_1p_3\\
\quad x_3'= (x_1^2+x_2^2+\varepsilon^2)p_3+x_1p_2-x_2p_1\\
\quad p_1'=-(p_2+x_1p_3)p_3+  f_{x_1}(x,s)\\
\quad p_2'=(p_1-x_2p_3)p_3+  f_{x_2}(x,s)\\
\quad p_3'=f_{x_3}(x,s)
\end{array}\right.
\end{equation}
with the mixed boundary conditions
\begin{equation}\label{tag:bcd}
x(t)=x,\quad p(T)=-D g(x(T)).
\end{equation}
\item The optimal control $\alpha^*$ verifies for a.e. $s\in [t,T]$
\begin{equation}\label{tagalphae}
\alpha_1(s)= p_1-x_2p_3,\qquad
\alpha_2(s)= p_2+x_1p_3,\qquad
\alpha_3(s)= \varepsilon p_3.
\end{equation}
\end{enumerate}
\end{proposition}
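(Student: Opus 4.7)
The plan is to apply the Pontryagin Maximum Principle exactly as cited, namely [Cla, Theorem~22.17]. The first step is to verify applicability: Hypothesis~\ref{BasicAss_illi} gives $f,g\in C^2$ with globally bounded $C^2$ norms, the dynamics in \eqref{DYNe} are polynomial in $(x,\alpha)$, and Proposition~\ref{boundalfa} already lets us restrict the minimization to controls with $\|\alpha\|_\infty\le C_2(1+|x_1|+|x_2|)$. Along such optimal pairs all regularity hypotheses of Clarke's theorem hold, and the problem is normal (no state constraints, unconstrained control set~$\re^3$), so PMP applies with unit cost multiplier.

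Next I would write the unmaximized Hamiltonian in the standard convention,
\[
\mathcal H(x,p,\alpha,s) := \langle p, b(x,\alpha)\rangle - \tfrac12|\alpha|^2 - f(x,s),
\qquad b(x,\alpha) := (\alpha_1,\alpha_2,-x_2\alpha_1 + x_1\alpha_2 + \varepsilon\alpha_3),
\]
which, after regrouping, reads
\[
\mathcal H(x,p,\alpha,s) = \alpha_1(p_1 - x_2 p_3) + \alpha_2(p_2 + x_1 p_3) + \varepsilon\alpha_3 p_3 - \tfrac12|\alpha|^2 - f(x,s).
\]
Strict concavity of $\mathcal H$ in the unconstrained variable $\alpha\in\re^3$ means the pointwise maximum condition $\mathcal H(x^*,p,\alpha^*,s) = \max_{\alpha}\mathcal H(x^*,p,\alpha,s)$ supplied by PMP is attained at the unique critical point, which yields the feedback formulas~\eqref{tagalphae}.

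Finally, substituting $\alpha^*$ into $\mathcal H$ produces the maximized Hamiltonian
\[
\widetilde{\mathcal H}(x,p,s) = \tfrac12\bigl[(p_1 - x_2 p_3)^2 + (p_2 + x_1 p_3)^2 + \varepsilon^2 p_3^2\bigr] - f(x,s) = H^{\varepsilon}(x,p) - f(x,s),
\]
and the Hamiltonian system $\dot x^* = \partial_p \widetilde{\mathcal H}$, $-\dot p = \partial_x \widetilde{\mathcal H}$ together with the transversality $p(T) = -Dg(x^*(T))$ produces \eqref{tage}--\eqref{tag:bcd} by direct differentiation. The $x$-equations reproduce \eqref{DYNe} with $\alpha^*$ inserted as in \eqref{tagalphae}; for $x_3'$ one uses $-x_2\alpha_1^* + x_1\alpha_2^* + \varepsilon\alpha_3^* = x_1 p_2 - x_2 p_1 + (x_1^2 + x_2^2 + \varepsilon^2)p_3$. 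The $p$-equations follow by differentiating the squared terms of $\widetilde{\mathcal H}$ in $x_1,x_2$, and noting that $x_3$ enters $\widetilde{\mathcal H}$ only through~$f$, which forces $p_3' = f_{x_3}$. The only mildly delicate point I expect is the verification of Clarke's technical hypotheses for an a priori $L^2$ control class, which is precisely what Proposition~\ref{boundalfa} is for; beyond that, the proof is essentially bookkeeping.
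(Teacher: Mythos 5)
Your proposal is correct and follows the same route the paper takes: the paper's ``proof'' of Proposition~\ref{prop:pontriagind} consists entirely of citing Clarke's Theorem~22.17, and you have simply unpacked that citation, including the useful observation that Proposition~\ref{boundalfa} supplies the a~priori $L^\infty$ bound needed to bring an $L^2$-control problem within the scope of Clarke's hypotheses. The Hamiltonian bookkeeping (maximization over $\alpha$ via strict concavity, then differentiation of the maximized Hamiltonian in $p$ and $x$, with transversality $p(T)=-Dg(x^*(T))$) is carried out correctly and reproduces \eqref{tage}--\eqref{tagalphae}.
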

\begin{remark}\label{rmk:PMPintrin}
Let us observe that equations~\eqref{tage} and \eqref{tagalphae} can be rewritten in terms of the vector fields as follows
\begin{equation*}
\begin{array}{lll}
 x_1'= X_1p,&\quad x_2'= X_2p,&\quad x_3'=-x_2X_1p+x_1X_2p+\varepsilon^2 p_3,\\
 p_1'=-p_3X_2p+ f_{x_1}(x,s),&\quad  p_2'=p_3X_1p +  f_{x_2}(x,s),&\quad p_3'=f_{x_3}(x,s)
\end{array}
\end{equation*}
and respectively
\begin{equation*}
\alpha_1(s)=X_1p(s),\qquad \alpha_2(s)=X_2p(s),\qquad \alpha_3(s)=\varepsilon p_3(s).
\end{equation*}
\end{remark}

\begin{corollary}\label{coro:regularityd}
Let $(x^*, \alpha^*)$ be optimal for the optimal control problem in \eqref{def:OCDe}. Then:
\begin{itemize}
\item [1.] The unique solution of the Cauchy  problem
$$
\left\{
\begin{array}{ll}
\quad \pi_1'=-(\pi_2+x^*_1\pi_3)\pi_3+  f_{x_1}(x^*,s),\\
\quad \pi_2'=(\pi_1-x^*_2\pi_3)\pi_3+  f_{x_2}(x^*,s),\\
\quad \pi_3'=f_{x_3}(x^*,s),\\
\quad \pi(T)=-D g(x^*(T)).
\end{array}\right.
$$
is the costate $p$ associated to $(x^*, \alpha^*)$ as in Proposition~\ref{prop:pontriagind}.
\item[2.] The optimal $\alpha^*$ is a feedback control and it is
uniquely expressed by
\[
\alpha_1^*(s)= p_1-x^*_2p_3,\qquad
\alpha_2^*(s)= p_2+x^*_1p_3,\qquad
\alpha_3^*(s)= \varepsilon p_3
\]
where $p$ is the costate associated to $(x^*, \alpha^*)$.
\item[3.] The functions  $x^*$  and $\alpha^*$ are of class $C^1$. In particular equations \eqref{tage} and \eqref{tagalphae} hold for every $s\in [t,T]$.
\item[4.] Assume that,  for some $k\in\mathbb N$,
$D_xf\in C^k$.
Then, the costate~$p$ and the control~$\alpha^*$ are of class $C^{k+1}$ and $x^*$ is of class $C^{k+2}$.
 \end{itemize}
\end{corollary}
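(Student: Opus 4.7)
The plan is to dispatch the four assertions in the stated order; assertion 1 (uniqueness for the costate Cauchy problem) is the crux, and the remaining assertions follow by a standard ODE bootstrap.

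For assertion 1, I would exploit the fact that the system for $\pi$ decouples through its third component: the third equation reads $\pi_3' = f_{x_3}(x^*(s), s)$, which involves only the (known) trajectory $x^*$ and $s$. Integrating backwards from $\pi_3(T) = -g_{x_3}(x^*(T))$ determines $\pi_3 \in C^0([t,T])$ uniquely. With $\pi_3$ in hand, the equations for $(\pi_1, \pi_2)$ form a linear nonautonomous system with bounded continuous coefficients (recall that $x^*$ is continuous on $[t,T]$ by Remark \ref{2.3}), so the backward Cauchy problem with datum $(\pi_1(T), \pi_2(T)) = -(g_{x_1}, g_{x_2})(x^*(T))$ is uniquely solvable. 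The costate $p$ furnished by Proposition \ref{prop:pontriagind} satisfies this same Cauchy problem, hence coincides with the unique solution. Assertion 2 then follows immediately: the uniqueness of $p$ promotes \eqref{tagalphae} to a closed feedback formula for $\alpha^*$ in terms of $x^*$.

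For assertion 3, I would bootstrap starting from Proposition \ref{prop:pontriagind}, which already provides $x^*, p \in AC([t,T])$. The right-hand side of the costate equation in \eqref{tage} is then continuous in $s$, because $x^*, p$ are continuous and $D_x f$ is continuous on $\re^3 \times [0,T]$ by Hypothesis \ref{BasicAss_illi}; hence $p \in C^1([t,T]; \re^3)$. Formula \eqref{tagalphae} then makes $\alpha^*$ continuous, the dynamics \eqref{DYNe} force $x^* \in C^1$, and a second application of \eqref{tagalphae} yields $\alpha^* \in C^1$; continuity of both sides of \eqref{tage} and \eqref{tagalphae} gives their pointwise validity on $[t,T]$. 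Assertion 4 is then proved by induction on $k$: the base case $k=0$ follows by running the previous chain one more step (once $\alpha^* \in C^1$, \eqref{DYNe} gives $x^* \in C^2$), and inductively, if the statement holds for $k$ and $D_x f \in C^{k+1}$, then the right-hand side of the $p$-equation is $C^{k+1}$ in $s$ (using $x^* \in C^{k+2}$ and $D_x f \in C^{k+1}$), so $p \in C^{k+2}$; \eqref{tagalphae} and \eqref{DYNe} then raise $\alpha^*$ to $C^{k+2}$ and $x^*$ to $C^{k+3}$, closing the induction.

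I foresee no serious obstacle: the entire argument rests on the decoupling of the $\pi_3$-equation, transparent from the structure of \eqref{tage}, after which the regularity claims reduce to routine ODE bootstrap.
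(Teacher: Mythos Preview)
Your proposal is correct and follows essentially the same approach as the paper: point~1 from the fact that the Pontryagin costate satisfies the stated Cauchy problem, point~2 from \eqref{tagalphae}, and points~3--4 via the standard bootstrap through \eqref{tagalphae}, \eqref{DYNe}, \eqref{tage}. Your explicit decoupling argument for uniqueness in point~1 is a harmless elaboration; the paper simply treats uniqueness as immediate (the right-hand side is locally Lipschitz in~$\pi$ and a global solution~$p$ is already furnished by Proposition~\ref{prop:pontriagind}).
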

\begin{proof}
Point 1 is an immediate consequence of \eqref{tage} together with the endpoint condition \eqref{tag:bcd}. Point 2 follows from \eqref{tagalphae}.\\
3. Since $x^*$ and $p$ are continuous, the continuity of  $\alpha^*$ follows from \eqref{tagalphae}. The dynamics~\eqref{DYNe} yield that $x^*\in C^1$. 
Relations~\eqref{tage} and~\eqref{tagalphae}  imply that  $p$ and $\alpha^*$  are of class $C^1$.
By a standard bootstrap inductive argument still using \eqref{DYNe}, \eqref{tage} and \eqref{tagalphae}, we get point 4.
\end{proof}

\begin{proposition}\label{OS_e}
For $\varepsilon\neq 0$, the optimal trajectories are unique after the initial time: if $x^*(\cdot)$ is an optimal trajectory for $u_\varepsilon(x,t)$, then for every $t<\tau<T$ there are no other optimal trajectories for $u_\varepsilon(x^*(\tau),\tau)$ other than $x^*(\cdot)$ restricted to $[\tau,T]$. 
\end{proposition}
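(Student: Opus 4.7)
The plan is to combine the Pontryagin necessary conditions of Proposition~\ref{prop:pontriagind} with semiconcavity of the value function $u_\varepsilon$ to pin down the initial costate at the ``restart point'' $(x^*(\tau),\tau)$, and then to invoke Cauchy--Lipschitz uniqueness for the Hamiltonian ODE system \eqref{tage}--\eqref{tagalphae}. Let $\tilde x$ be any optimal trajectory for $u_\varepsilon(x^*(\tau),\tau)$ and let $\tilde p$ be an associated costate given by Proposition~\ref{prop:pontriagind}. By dynamic programming, $x^*$ restricted to $[\tau,T]$ is also optimal for $u_\varepsilon(x^*(\tau),\tau)$, with costate $p|_{[\tau,T]}$. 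Under Hypothesis~\ref{BasicAss_illi} the right-hand side of \eqref{tage} is $C^1$ in $(x,p)$; so if I can show $\tilde p(\tau)=p(\tau)$, then $(\tilde x,\tilde p)\equiv(x^*,p)$ on $[\tau,T]$ by classical ODE uniqueness, and the optimal control $\tilde\alpha$ then coincides with $\alpha^*$ via \eqref{tagalphae}.

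To identify $\tilde p(\tau)=p(\tau)$, I first prove that $u_\varepsilon(\cdot,s)$ is locally semiconcave in $x$, uniformly for $s$ in compact subsets of $[0,T]$. The Lagrangian $\tfrac12|\alpha|^2$ is strictly convex in $\alpha$, the dynamics \eqref{DYNe} is control-affine with $C^\infty$ coefficients in $x$, and the running/terminal costs are $C^2$ by Hypothesis~\ref{BasicAss_illi}; moreover, Proposition~\ref{boundalfa} together with the H\"older estimate \eqref{bd:x_1,x_2} localizes the set of competing controls and trajectories. These are the classical ingredients (cf.\ Cannarsa--Sinestrari) ensuring semiconcavity of the Bolza value function on bounded sets. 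Next, the standard sensitivity relation, applied forward to the competitor $\tilde x$, yields $-\tilde p(\tau)\in D^+_x u_\varepsilon(x^*(\tau),\tau)$. The same relation, applied backward to $x^*|_{[t,\tau]}$ (which is optimal from $(x,t)$ ending at $x^*(\tau)$), gives $-p(\tau)\in D^-_x u_\varepsilon(x^*(\tau),\tau)$. Since $\tau\in(t,T)$ is an interior time, both sub- and super-differentials at $(x^*(\tau),\tau)$ are non-empty; by semiconcavity this forces differentiability of $u_\varepsilon(\cdot,\tau)$ at $x^*(\tau)$ with $D^+=D^-=\{D_xu_\varepsilon(x^*(\tau),\tau)\}$. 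Therefore $\tilde p(\tau)=p(\tau)=-D_xu_\varepsilon(x^*(\tau),\tau)$, and the argument of the first paragraph concludes $\tilde x\equiv x^*$ on $[\tau,T]$.

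The main obstacle is the semiconcavity step in the present unbounded-coefficient setting: the third row of $B^\varepsilon$ depends quadratically on $(x_1,x_2)$, so the usual textbook estimates are not directly available on the whole space. The rescue is that Proposition~\ref{boundalfa} and \eqref{bd:x_1,x_2} confine the relevant optimal trajectories and controls to a bounded subset of $\re^3\times\re^3$ depending only on $|x_1|,|x_2|$ and the constants of Hypothesis~\ref{BasicAss_illi}, on which the classical semiconcavity proof applies. Finally, the hypothesis $\varepsilon\neq 0$ enters in two places: it makes $B^\varepsilon(x)(B^\varepsilon(x))^T$ invertible, so that the feedback \eqref{tagalphae} is a bijection between admissible controls and costates, and it ensures that $H^\varepsilon$ is strictly convex in $p$, which is what underlies both the Euclidean (as opposed to merely horizontal) semiconcavity of $u_\varepsilon$ and the Lipschitz character of the Hamiltonian vector field used in the final Cauchy--Lipschitz step.
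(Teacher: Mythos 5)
Your proposal is correct in outline but takes a genuinely different route from the paper's. You invoke the classical semiconcavity--plus--sensitivity-relations machinery of Cannarsa and Sinestrari: semiconcavity of $u_\varepsilon$ (already available from Lemma~\ref{lemmasemic}, so you need not re-derive it), a forward sensitivity relation placing $-\tilde p(\tau)\in D^+_x u_\varepsilon(x^*(\tau),\tau)$, a backward one placing $-p(\tau)\in D^-_x u_\varepsilon(x^*(\tau),\tau)$, hence differentiability of $u_\varepsilon(\cdot,\tau)$ at $x^*(\tau)$, and finally Cauchy--Lipschitz forward from $\tau$. The paper avoids sensitivity relations and differentiability altogether: it concatenates $x^*|_{[t,\tau]}$ with the competitor $y^*$ to produce a second optimal trajectory $z^*$ for the \emph{same} value $u_\varepsilon(x,t)$; both $x^*$ and $z^*$ are $C^1$ by Corollary~\ref{coro:regularityd}, and since $B^\varepsilon(B^\varepsilon)^T$ is invertible the first three lines of \eqref{tage} give $p(s)=\beta(x^*(s))\dot x^*(s)$ and $q(s)=\beta(z^*(s))\dot z^*(s)$; as $x^*\equiv z^*$ on $[t,\tau]$ one gets $p(\tau)=q(\tau)$, and Cauchy--Lipschitz closes the argument. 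Thus the paper exploits $\varepsilon\neq 0$ algebraically (recovering $p$ from $\dot x^*$), while you exploit it analytically (certifying a nonempty subdifferential).

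One caveat on your version: the ``backward'' relation $-p(\tau)\in D^-_x u_\varepsilon(x^*(\tau),\tau)$ is not the forward one ``applied backward''; it is a distinct statement, whose proof rests on building a $C^1$ lower test function for $u_\varepsilon(\cdot,\tau)$ at $x^*(\tau)$ by $C^1$-perturbing the endpoint of $x^*|_{[t,\tau]}$ along admissible trajectories, and the possibility of such a $C^1$ perturbation in all directions of $\re^3$ is precisely where full-rank controllability ($\varepsilon\neq 0$) enters. This is standard in the coercive, bounded-coefficient setting, but it is not proved in the paper and would need to be written out with care here given the quadratic third row of $B^\varepsilon$; the paper's algebraic route bypasses it entirely, using only what is already in Proposition~\ref{prop:pontriagind} and Corollary~\ref{coro:regularityd}.
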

\begin{proof}
Let~$y^*$ be an optimal trajectory for~$u(x^*(\tau),\tau)$; the concatenation~$z^*$ of~$x^*$ with~$y^*$ at~$\tau$ is still optimal for~$u_\varepsilon(x,t)$. Let~$p$ and~$q$ be respectively the costate of~$x^*$ and of~$z^*$. By point~$(4)$ both~$x^*$ and~$z^*$ are $C^1$.
Since the matrix~$B^{\varepsilon}(x)(B^{\varepsilon}(x))^T$ is invertible, we denote by $\beta(x)$ its inverse and, from the first three lines in~\eqref{tage}, we get
\begin{equation*}
p(s)=\beta(x^*(s))\dot x^*(s)\qquad\textrm{and}\qquad q(s)=\beta(z^*(s))\dot z^*(s)\qquad\forall s\in(t,T).
\end{equation*}
Since $x^*(\cdot)=z^*(\cdot)$ in~$[t,\tau]$, we get: $p(\tau)=q(\tau)$. In conclusion, both~$(x^*,p)$ and~$(z^*,q)$ solve the same Cauchy problem \eqref{tage}
on~$(\tau,T]$ with the same data at time~$\tau$. The Cauchy-Lipschitz theorem ensures that they coincide.
\end{proof}

\subsection{The Hamilton-Jacobi equation and the value function of the optimal control problem}\label{vf}
The aim of this section is to study the Hamilton-Jacobi equation~\eqref{eq:MFGe}-(i) with $m$ fixed, namely
\begin{equation}\label{HJei}
\left\{\begin{array}{ll}
-\partial_t u_\varepsilon+H^\varepsilon(x,Du_\varepsilon)=f(x, t)&\qquad \textrm{in }\re^3\times (0,T),\\
u_\varepsilon(x,T)=g(x)&\qquad \textrm{on }\re^3
\end{array}\right.
\end{equation}
where $H^\varepsilon$ is defined in~\eqref{He}.
Under Hypothesis~\ref{BasicAss_illi}, we shall prove Lipschitz continuity and semiconcavity of $u_\varepsilon$.
As a first step, in the next lemma we show that the solution $u_\varepsilon$ of \eqref{HJei} can be represented as the value function of the control problem defined in \eqref{repre}.

\begin{lemma}\label{3.2}
Under Hypothesis \ref{BasicAss_illi}, the value function~$u_{\varepsilon}$ defined in~\eqref{repre} is the unique bounded viscosity solution to problem~\eqref{HJei} and there exists a constant~$C$ independent of~$\varepsilon$ such that
\begin{equation}\label{boundue}
\|u_{\varepsilon}\|_{\infty}\leq C.
\end{equation}
\end{lemma}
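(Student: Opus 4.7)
The plan is to prove the three claims of the lemma in order: that $u_\varepsilon$ solves \eqref{HJei} in the viscosity sense, that $\|u_\varepsilon\|_\infty$ is bounded uniformly in $\varepsilon$, and that $u_\varepsilon$ is the unique such bounded viscosity solution. I expect the first two points to be largely standard, while uniqueness will be the main technical obstacle because of the unbounded state space combined with the quadratic-in-$p$ and quadratic-in-$x$ growth of $H^\varepsilon$.

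For the viscosity-solution statement, I will first establish the dynamic programming principle: for every $(x,t)\in \re^3\times[0,T]$ and every $h\in(0,T-t]$,
\[
u_\varepsilon(x,t) = \inf_{(x(\cdot),\alpha)\in\mathcal A_\varepsilon(x,t)} \left\{ \int_t^{t+h} \left[\frac{1}{2}|\alpha(s)|^2 + f(x(s),s)\right]\, ds + u_\varepsilon(x(t+h), t+h) \right\},
\]
through the usual concatenation of admissible pairs from $\mathcal A_\varepsilon$. Since the drift in \eqref{DYNe} is $b(x,\alpha)=B^\varepsilon(x)\alpha$, a direct Legendre-transform computation yields $\sup_{\alpha\in\re^3}\{-p\cdot B^\varepsilon(x)\alpha - \frac{1}{2}|\alpha|^2\}=\frac12|pB^\varepsilon(x)|^2=H^\varepsilon(x,p)$, so the DPP combined with smooth test functions and the continuity of $f$ yields the standard viscosity sub- and supersolution inequalities for \eqref{HJei}; the terminal condition $u_\varepsilon(\cdot,T)=g$ is immediate from the definition of $u_\varepsilon$.

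The $L^\infty$ bound follows at once by testing $u_\varepsilon(x,t)$ against the admissible null control $\alpha\equiv 0$, for which \eqref{DYNe} gives $x(\cdot)\equiv x$. This yields $u_\varepsilon(x,t)\leq (T-t)\|f\|_\infty + \|g\|_\infty \leq C(T+1)$, while the nonnegativity of $\frac12|\alpha|^2$ produces the matching lower bound $u_\varepsilon(x,t)\geq -C(T+1)$ for every admissible $\alpha$. Both bounds are $\varepsilon$-independent thanks to Hypothesis~\ref{BasicAss_illi}.

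The remaining and hardest step is uniqueness. I will run a doubling-of-variables argument on any pair of bounded viscosity solutions $u_1,u_2$ of \eqref{HJei}, setting
\[
\Phi(x,y,t,s) = u_1(x,t) - u_2(y,s) - \frac{|x-y|^2 + (t-s)^2}{2\beta} - \eta(1+|x|^2)^{1/2} - \eta(1+|y|^2)^{1/2} + \lambda(T-t).
\]
The $\eta$-weights force the supremum to be attained at some point $(\bar x,\bar y,\bar t,\bar s)$ for each $\beta,\eta>0$, and the term $\lambda(T-t)$ with $\lambda>0$ chosen large enough will absorb the Hamiltonian contribution. The key structural estimate is
\[
|H^\varepsilon(x,p) - H^\varepsilon(y,p)| \leq C(1+|x|+|y|)\,|p|^2\,|x-y|,
\]
which follows from the fact that the entries of $B^\varepsilon(x)-B^\varepsilon(y)$ are bounded by $|x-y|$ while $|pB^\varepsilon(x)|\leq(1+|x|)|p|$. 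Substituting the penalty-induced relation $p\sim(\bar x-\bar y)/\beta$ and exploiting the standard bound $|\bar x-\bar y|^2/\beta\to 0$ as $\beta\to 0^+$, this error is controlled and a contradiction is reached if $\sup(u_1-u_2)>0$. Sending $\eta\to 0^+$ and swapping the roles of $u_1$ and $u_2$ completes the uniqueness argument and, together with the value-function representation, finishes the proof.
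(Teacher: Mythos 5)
Your DPP derivation, the Legendre-transform identification $\sup_{\alpha\in\re^3}\{-pB^\varepsilon(x)\cdot\alpha-\frac12|\alpha|^2\}=H^\varepsilon(x,p)$, and the uniform $L^\infty$ bound obtained from the null control $\alpha\equiv 0$ are all correct; the last is exactly the paper's observation, and the first two correspond to the classical result the paper cites from \cite[Proposition~III.3.5]{BCD}.

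The gap is in the uniqueness step. Your structural estimate $|H^\varepsilon(x,p)-H^\varepsilon(y,p)|\le C(1+|x|+|y|)\,|p|^2\,|x-y|$ is correct, but the doubling does not close. Writing $p=(\bar x-\bar y)/\beta$ and letting $a,b$ denote the derivatives of the localizer $\eta(1+|\cdot|^2)^{1/2}$ at $\bar x,\bar y$ (so $|a|,|b|\le\eta$), the Hamiltonian error one has to absorb is $H^\varepsilon(\bar x,p+a)-H^\varepsilon(\bar y,p-b)$, bounded by $C(1+|\bar x|+|\bar y|)(|p|+\eta)\bigl(|p|\,|\bar x-\bar y|+\eta(1+|\bar x|+|\bar y|)\bigr)$. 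The term $|p|\,|\bar x-\bar y|=|\bar x-\bar y|^2/\beta$ does vanish as $\beta\to 0^+$, but the summand $\eta(1+|\bar x|+|\bar y|)$ is only $O(1)$ and the prefactor $(1+|\bar x|+|\bar y|)(|p|+\eta)$ is unbounded, because the weight $\eta(1+|x|^2)^{1/2}$ merely constrains $|\bar x|,|\bar y|\lesssim 1/\eta$. Concretely, the quadratic growth in $x$ of the third diagonal entry $x_1^2+x_2^2+\varepsilon^2$ of $B^\varepsilon(B^\varepsilon)^T$ outpaces the localizer, so the claim that ``this error is controlled'' is not justified, and choosing $\lambda$ large cannot rescue it since $\lambda$ must eventually be sent to $0^+$ to conclude. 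The paper sidesteps this obstruction altogether by citing \cite[Theorem~3.1]{BCP}: there uniqueness is established by a control-theoretic argument, showing that any bounded uniformly continuous viscosity solution obeys the sub-/super-optimality principle and therefore coincides with the value function, without ever invoking a comparison estimate of the type your doubling requires. To make your route work one would instead need a weighted comparison adapted to quadratic-in-$x$ coefficients, of the kind developed in \cite{DL}, which is considerably more delicate than what you have sketched.
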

\begin{proof}
The proof comes from classical results: see for instance \cite[Proposition III.3.5]{BCD} and \cite[Theorem 3.1]{BCP}).
The bound of $u_{\varepsilon}$ uniform on $\varepsilon$ is obtained taking as admissible control $\alpha=0$ in \eqref{repre}.
\end{proof}

\begin{lemma}\label{LLS} The value function~$u_\varepsilon$ fulfills the following properties
\begin{enumerate}
\item $u_{\varepsilon}$ is Lipschitz continuous with respect to the spatial variable~$x$ uniformly on $\varepsilon$,
\item $u_{\varepsilon}$ is locally
Lipschitz continuous with respect to the time variable $t$ with a
Lipschitz constant $C(1+|x_1|^2+|x_2|^2)$ where $C$ is a constant independent of~$\varepsilon$.
\end{enumerate}
\end{lemma}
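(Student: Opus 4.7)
The plan is to argue by dynamic programming, using the optimal pair supplied by Remark~\ref{2.3} as the basis for constructing competitor trajectories, while carefully exploiting the Heisenberg-type structure of the dynamics~\eqref{DYNe} so that all constants remain independent of $\varepsilon\in(0,1]$.

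For part~(1), I fix $x,y\in\re^3$ and $t\in[0,T]$, and let $(x^*,\alpha^*)$ be optimal for $u_\varepsilon(x,t)$. Using the same control $\alpha^*$ starting from $y$ produces a competitor trajectory $y(\cdot)\in\mathcal A_\varepsilon(y,t)$. The key observation is that the first two components of~\eqref{DYNe} depend only on $(\alpha_1,\alpha_2)$, so $y_i(s)-x_i^*(s)\equiv y_i-x_i$ for $i=1,2$, while in the third component the $\varepsilon\alpha_3^*$ contributions cancel exactly, leaving
\begin{equation*}
y_3(s)-x_3^*(s)=(y_3-x_3)+\int_t^s\bigl[-(y_2-x_2)\alpha_1^*(\tau)+(y_1-x_1)\alpha_2^*(\tau)\bigr]\,d\tau.
\end{equation*}
Cauchy--Schwarz combined with the uniform bound $\|\alpha^*\|_{L^2}\leq C_1$ from~\eqref{bd:alpa_L2} yields $\sup_{s\in[t,T]}|y(s)-x^*(s)|\leq C|y-x|$ with $C$ independent of $\varepsilon$, and the Lipschitz continuity of $f$ and $g$ from Hypothesis~\ref{BasicAss_illi} gives $u_\varepsilon(y,t)-u_\varepsilon(x,t)\leq J_{y,t}(\alpha^*)-J_{x,t}(\alpha^*)\leq C'|y-x|$. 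Swapping the roles of $x$ and $y$ completes this step.

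For part~(2), I treat $t_1<t_2$ in two directions. The easy inequality $u_\varepsilon(x,t_1)-u_\varepsilon(x,t_2)\leq C(t_2-t_1)$ follows from the DPP: the choice $\alpha\equiv 0$ on $[t_1,t_2]$ keeps the trajectory at $x$ and adds only $\int_{t_1}^{t_2}f(x,s)\,ds$, which is bounded by $C(t_2-t_1)$ thanks to Hypothesis~\ref{BasicAss_illi}. For the reverse direction, I pick $(x^*,\alpha^*)$ optimal for $u_\varepsilon(x,t_1)$ and apply the DPP identity
\begin{equation*}
u_\varepsilon(x,t_1)=\int_{t_1}^{t_2}\Bigl[\tfrac12|\alpha^*(s)|^2+f(x^*(s),s)\Bigr]\,ds+u_\varepsilon(x^*(t_2),t_2),
\end{equation*}
which combined with part~(1) yields $u_\varepsilon(x,t_2)-u_\varepsilon(x,t_1)\leq L|x-x^*(t_2)|+\int_{t_1}^{t_2}\bigl[\tfrac12|\alpha^*|^2+|f|\bigr]ds$.

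The main obstacle is producing the precise quadratic weight $C(1+|x_1|^2+|x_2|^2)$. The plan is to invoke Proposition~\ref{boundalfa} so that $\alpha^*$ may be chosen with $\|\alpha^*\|_\infty\leq C_2(1+|x_1|+|x_2|)$, and to use the H\"older bound from Remark~\ref{2.3} to control $|x_i^*(s)|\leq C(1+|x_1|+|x_2|)$ for $i=1,2$. Then $|x_i-x_i^*(t_2)|\leq C(1+|x_1|+|x_2|)(t_2-t_1)$ for $i=1,2$, and the $x_3$-equation in~\eqref{DYNe}, together with $\varepsilon\leq 1$, gives $|x_3-x_3^*(t_2)|\leq C(1+|x_1|^2+|x_2|^2)(t_2-t_1)$ after bounding each cross product $|x_i^*(\tau)\alpha_j^*(\tau)|$ by $C(1+|x_1|^2+|x_2|^2)$. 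The integral $\int_{t_1}^{t_2}\tfrac12|\alpha^*|^2\,ds$ obeys the same quadratic bound, and assembling these estimates delivers the claimed $t$-Lipschitz constant, manifestly independent of $\varepsilon$.
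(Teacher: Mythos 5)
Your proposal is correct and takes essentially the same approach as the paper: part (1) compares the optimal pair against the competitor started from $y$ with the same control, exploiting the structural cancellation in the third component and the $\varepsilon$-uniform $L^2$ bound~\eqref{bd:alpa_L2}; part (2) uses the DPP and the $\varepsilon$-uniform $L^\infty$ bound on $\alpha^*$ from Proposition~\ref{boundalfa}, which is precisely the argument the paper condenses by citing \cite[Lemma 4.7]{C}.
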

\begin{proof}
In this proof, $C_T$ will denote a constant which may change from line to line but it always depends only on the constants in the assumptions (especially the Lipschitz constants of $f$ and $g$) and on~$T$.
For simplicity we write ``$u$'' instead of ``$u_\varepsilon$''.\\
1. Let $t$ be fixed. We follow the proof of \cite[Lemma 4.7]{C}.
From Remark \ref{2.3} we know that there exists~$\alpha(\cdot)$ optimal control for $u(x,t)$ and $x(\cdot)$ optimal trajectory i.e.:
\begin{equation}
\label{eq:HJ31}
u(x_1, x_2, x_3, t)=\int_t^T\frac12 |\alpha(s)|^2+f(x(s),s)\,ds+g(x(T)).
\end{equation}
We consider the path $x^*(s)$ starting from $y=(y_1,y_2, y_3)$, with control $\alpha$. %Let $\alpha(\cdot)$ be the optimal control for $u(x,t)$ and $x(\cdot)$ the optimal trajectory. Let $x^*(s)$ be the path starting from $y=(y_1,y_2, y_3)$, with control $\alpha(\cdot)$.
%To prove the Lipschitz continuity w.r.t. $x$ uniform on $\varepsilon$ we proceed as in Lemma \ref{L1}; the first two components of the trajectories  $x(s)$ and $x^*(s)$ are as in the previous section and the third components become
We have
\begin{eqnarray*}
x_1^*(s)&=&y_1+\int_t^s\alpha_1(\tau) \,d\tau=y_1-x_1+x_1(s)\\
x_2^*(s)&=&y_2+\int_t^s \alpha_2(\tau)\,d\tau=y_2-x_2+x_2(s)\\
x_3^*(s)&=&y_3-\int_t^s\alpha_1(\tau)x_2^*(\tau)\,d\tau+
\int_t^s\alpha_2(\tau)x_1^*(\tau)\,d\tau +\varepsilon\int_t^s\alpha_3(\tau)\,d\tau\\
&=&y_3- (y_2-x_2)\int_t^s\alpha_1(\tau)\,d\tau+
(y_1-x_1)\int_t^s\alpha_2(\tau)\,d\tau+\varepsilon\int_t^s\alpha_3(\tau)\,d\tau\\
&&+\int_t^s (-\alpha_1(\tau)x_2(\tau)+\alpha_2(\tau)x_1(\tau))\,d\tau\\
&=&x_3(s)+(y_3-x_3)- (y_2-x_2)\int_t^s\alpha_1(\tau)\,d\tau+
(y_1-x_1)\int_t^s\alpha_2(\tau)\,d\tau+\varepsilon\int_t^s\alpha_3(\tau)\,d\tau\\
&=&x_3(s)+(y_3-x_3)- (y_2-x_2)\int_t^s\alpha_1(\tau)\,d\tau+
(y_1-x_1)\int_t^s\alpha_2(\tau)\,d\tau.
\end{eqnarray*}
Using the Lipschitz continuity of $f$ we get
\begin{multline*}
f(x^*(s),s)
\leq f(x(s), s)
+L(|y_1-x_1|+ |y_2-x_2| + |y_3-x_3|+\\ +|y_2-x_2|\sqrt{s-t}\|\alpha_1\|_2 +|y_1-x_1|\sqrt{s-t}\|\alpha_2\|_2)\end{multline*}
and from the $L^2$ uniform estimate for $\alpha_1$ and $\alpha_2$ in \eqref{bd:alpa_L2} we get 
$$f(x^*(s),s)-f(x(s), s)\leq C_T(\vert y_1-x_1\vert+\vert y_2-x_2\vert +\vert y_3-x_3\vert).$$
By the same calculations for $g$ and substituting equality~\eqref{eq:HJ31} in
\begin{equation*}%\label{eq:HJ5}
u(y_1, y_2, y_3, t)\leq \int_t^T\frac12 |\alpha(s)|^2+f(x^*(s),s)\,ds+g(x^*(T)),
\end{equation*}
we get
\begin{equation*}%\label{eq:HJ6}
u(y_1, y_2, y_3, t)\leq u(x_1, x_2, x_3, t)+C_T(\vert y_1-x_1 \vert +\vert y_2-x_2\vert+ \vert y_3-x_3\vert).
\end{equation*} 
Reversing the role of $x$ and $y$ we get the result.

2. For proving the Lipschitz continuity w.r.t. $t$, we note that
\begin{eqnarray*}
|x(s)-x|&\leq& C_T(s-t)(\|\alpha_1\|_{\infty}|x_2|+ \|\alpha_2\|_{\infty}|x_1|+\varepsilon\|\alpha_3\|_{\infty})\\
&\leq& C_T (1+|x_1|^2+|x_2|^2+\varepsilon)(s-t)\\&\leq& C_T (1+|x_1|^2+|x_2|^2),
\end{eqnarray*}
where the second inequality is due to the $L^\infty$-bound for optimal controls established in Proposition \ref{boundalfa}. We accomplish the proof following the same arguments as those in the proof of \cite[Lemma 4.7]{C}.
\end{proof}

In the following lemma we establish the semiconcavity of~$u$ w.r.t. $x$; we recall here below the definition of semiconcavity with linear modulus and we refer the reader to the monograph \cite{CS} for further properties.

\begin{definition}\label{SMC}
Let $u:\re^d\to\re$. We say that $u$ is {\em {semiconcave}} (with linear modulus)
if there exists a constant $C\geq 0$ such that  for all $\lambda\in [0,1]$,
$$\lambda u(y)+(1-\lambda)u(x)-u(\lambda y+(1-\lambda)x)\leq C\lambda(1-\lambda)|y-x|^2\qquad \forall x, y\in \re^d.$$
\end{definition}

\begin{lemma}\label{lemmasemic}
Under Hypothesis~\ref{BasicAss_illi}, the value function~$u_\varepsilon$, defined in \eqref{repre}, is semiconcave w.r.t. $x$ with a linear modulus independent on $\varepsilon$.
\end{lemma}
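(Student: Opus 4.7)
Fix $t\in[0,T]$, $x,y\in\R^3$ and $\lambda\in[0,1]$, and set $z:=\lambda y+(1-\lambda)x$. The plan is to use a single control, namely an optimal one for $u_\varepsilon(z,t)$, to produce admissible competitors in $\mathcal A_\varepsilon(x,t)$ and $\mathcal A_\varepsilon(y,t)$, and then to exploit the fact that the state evolution is affine in the initial datum for a frozen control.

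Let $(x^*_z,\alpha^*)\in\mathcal A_\varepsilon(z,t)$ be optimal for $u_\varepsilon(z,t)$ (existence by Remark \ref{2.3}). Denote by $x^*_x$ and $x^*_y$ the solutions of the dynamics \eqref{DYNe} on $[t,T]$ issued from $x$ and $y$ respectively, with the same control $\alpha^*$; both are admissible. Setting $A_i(s):=\int_t^s\alpha_i^*(\tau)\,d\tau$ for $i=1,2,3$, a direct integration of \eqref{DYNe} yields, for every $\xi\in\R^3$ and every $s\in[t,T]$,
\begin{equation*}
x^*_\xi(s)=\bigl(\xi_1+A_1(s),\ \xi_2+A_2(s),\ \xi_3-\xi_2 A_1(s)+\xi_1 A_2(s)+I(s)\bigr),
\end{equation*}
where $I(s):=\int_t^s[-A_2(\tau)\alpha_1^*(\tau)+A_1(\tau)\alpha_2^*(\tau)+\varepsilon\alpha_3^*(\tau)]\,d\tau$ does not depend on $\xi$. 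Hence $\xi\mapsto x^*_\xi(s)$ is affine in $\xi$, and consequently
\begin{equation*}
\lambda x^*_y(s)+(1-\lambda)x^*_x(s)=x^*_z(s),\qquad\forall s\in[t,T].
\end{equation*}

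Next I estimate $|x^*_y(s)-x^*_x(s)|$. The first two components contribute $|y_i-x_i|$ for $i=1,2$, while the third component contributes $(y_3-x_3)-(y_2-x_2)A_1(s)+(y_1-x_1)A_2(s)$. By H\"older's inequality and \eqref{bd:alpa_L2}, $|A_i(s)|\leq C_1\sqrt{T-t}$ uniformly in $\varepsilon$, so
\begin{equation*}
|x^*_y(s)-x^*_x(s)|^2\leq C_T|y-x|^2,\qquad\forall s\in[t,T],
\end{equation*}
with $C_T$ independent of $\varepsilon$.

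Finally, the bound $\|f(\cdot,s)\|_{C^2}+\|g\|_{C^2}\leq C$ from Hypothesis \ref{BasicAss_illi} makes $f(\cdot,s)$ and $g$ semiconcave on $\R^3$ with linear modulus $C$, so, using the affine identity above,
\begin{align*}
\lambda f(x^*_y(s),s)+(1-\lambda)f(x^*_x(s),s)-f(x^*_z(s),s)&\leq C\lambda(1-\lambda)|x^*_y(s)-x^*_x(s)|^2,\\
\lambda g(x^*_y(T))+(1-\lambda)g(x^*_x(T))-g(x^*_z(T))&\leq C\lambda(1-\lambda)|x^*_y(T)-x^*_x(T)|^2.
\end{align*}
Since $\frac12|\alpha^*|^2$ appears identically in all three costs, summing gives
\begin{equation*}
\lambda u_\varepsilon(y,t)+(1-\lambda)u_\varepsilon(x,t)-u_\varepsilon(z,t)\leq \lambda J_{y,t}(\alpha^*)+(1-\lambda)J_{x,t}(\alpha^*)-J_{z,t}(\alpha^*)\leq \tilde C_T\lambda(1-\lambda)|y-x|^2,
\end{equation*}
with $\tilde C_T$ independent of $\varepsilon$, which is exactly Definition \ref{SMC}.

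The only delicate point is the uniformity in $\varepsilon$, but this is guaranteed by \eqref{bd:alpa_L2} (the $L^2$-bound on optimal controls depends only on $T$ and on the constant $C$ of Hypothesis \ref{BasicAss_illi}). Crucially, the degeneracy at $\varepsilon=0$ plays no role here because one never needs to invert $B^\varepsilon(B^\varepsilon)^T$; the argument rests only on the affinity of the state flow in the initial condition, which is preserved in the limit.
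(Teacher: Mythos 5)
Your proof is correct and follows essentially the same strategy as the paper's: freeze the optimal control at the intermediate point $z$, propagate from $x$ and $y$, exploit the affine dependence of the Heisenberg flow on the initial datum, and use the $C^2$ bounds on $f,g$ together with the $\varepsilon$-uniform $L^2$ bound \eqref{bd:alpa_L2} on the optimal control. The paper reaches the same cancellation you encode in the identity $\lambda x^*_y(s)+(1-\lambda)x^*_x(s)=x^*_z(s)$ by writing out the differences componentwise and Taylor-expanding $f$ at $x_\lambda(s)$; your phrasing in terms of the affine flow map is a tidy equivalent.
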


\begin{proof}
For any~$x,y\in \re^3 $ and $\lambda\in[0,1]$, consider~$x_{\lambda}:=\lambda x+(1-\lambda)y$. Let $\alpha(s)$ and $x_{\lambda}(s)$ be an optimal control and respectively the corresponding optimal trajectory for~$u(x_{\lambda}, t)$; for $s\in[t,T]$ there hold
\begin{eqnarray*}
x_{\lambda,i}(s)&=&x_{\lambda,i}+\int_t^s\,\alpha_i(\tau)\,d\tau,\qquad\textrm{for } i=1,2\\
x_{\lambda,3}(s)&=&x_{\lambda,3}-\int_t^s\alpha_1(\tau)x_{\lambda,2}(\tau)\,d\tau+
\int_t^s\alpha_2(\tau)x_{\lambda,1}(\tau)\,d\tau+\varepsilon\int_t^s\alpha_3(\tau)d\tau.
\end{eqnarray*}
Let $x(s)$ and $y(s)$ satisfy \eqref{DYNH} with initial condition respectively $x$ and $y$ still with the same control $\alpha$, optimal for~$u(x_{\lambda}, t)$.
We have to estimate~$\lambda u(x,t) +(1-\lambda)u(y,t)$ in terms of $u(x_{\lambda}, t)$. To this end, arguing as in the proof of~\cite[Lemma 4.7]{C},  we have to estimate the terms $\lambda f(x(s), s) +(1-\lambda)f(y(s), s)$ and $\lambda g(x(T))+(1-\lambda)g(y(T)).$\\
We explicitly provide the calculations for the third component $x_3(s)$ since the calculations for $x_1(s)$ and $x_2(s)$ are the same as in \cite{C}.
We have
\begin{eqnarray*}
x_3(s)&=&x_3-\int_t^s\alpha_1(\tau)x_2(\tau)\,d\tau+
\int_t^s\alpha_2(\tau)x_1(\tau)\,d\tau+\varepsilon\int_t^s\alpha_3(\tau)d\tau\\
&=&x_3-x_{\lambda,3}+x_{\lambda,3}(s)- \int_t^s\alpha_1(\tau)(x_2(\tau)-x_{\lambda,2}(\tau))\,d\tau+
\int_t^s\alpha_2(\tau)(x_1(\tau)-x_{\lambda,1}(\tau))\,d\tau.
\end{eqnarray*}
Since $x_3-x_{\lambda,3}=(1-\lambda)(x_3-y_3)$ and
\begin{equation}\label{diff1}x_i(\tau)-x_{\lambda,i}(\tau)=(1-\lambda)(x_i-y_i)\qquad \textrm{for }i=1,2,
\end{equation}
we get
\begin{equation}\label{diff}
x_3(s)-x_{\lambda,3}(s)=(1-\lambda)\left[x_3-y_3- (x_2-y_2)\int_t^s\alpha_1(\tau)d\tau+ (x_1-y_1)\int_t^s\alpha_2(\tau)d\tau\right].
\end{equation}
Analogously for $y(s)$: since $y_3-x_{\lambda,3}=\lambda(y_3-x_3)$ and
\begin{equation}\label{diff2}
y_i(\tau)-x_{\lambda,i}(\tau)=\lambda(y_i-x_i)\qquad\textrm{for }i=1,2,
\end{equation}
we get
\begin{equation}\label{diff3}
y_3(s)-x_{\lambda,3}(s)=\lambda\left[(y_3-x_3)+ (x_2-y_2)\int_t^s\alpha_1(\tau)d\tau- (x_1-y_1)\int_t^s\alpha_2(\tau)d\tau\right].
\end{equation}

For the sake of brevity we provide the explicit calculations only for $f$ omitting the analogous ones for $g$; and we write $f(x_1,x_2, x_3):=f(x_1, x_2, x_3, s)$. 
We have
\begin{equation*}
\begin{array}{l}
\lambda f(x(s))+(1-\lambda)f(y(s))=\\
 \lambda f(x_1(s),x_{2}(s), x_{\lambda,3}(s)+(1-\lambda)(x_3-y_3- (x_2-y_2)\int_t^s\alpha_1(\tau)d\tau+ (x_1-y_1)\int_t^s\alpha_2(\tau)d\tau))+\\
+(1-\lambda)f(y_1(s), y_{2}(s), x_{\lambda,3}(s)+\lambda(y_3-x_3+ (x_2-y_2)\int_t^s\alpha_1(\tau)d\tau- (x_1-y_1)\int_t^s\alpha_2(\tau)d\tau).
\end{array}
\end{equation*}
Since for $i=1,2$ there holds
$$\lambda \partial_{x_i}f(x_{\lambda}(s))(x_i(s)-x_{\lambda,i}(s))+(1-\lambda) \partial_{x_i}f(x_{\lambda}(s))(y_i(s)-x_{\lambda,i}(s))=0,$$ 
the Taylor expansion of $f$ centered in  $x_{\lambda}(s)$ gives:
\begin{multline*}
\lambda f(x(s))+(1-\lambda)f(y(s))=\\
\lambda(f(x_{\lambda}(s))+ Df(x_{\lambda}(s))(x(s)-x_{\lambda}(s))
+R_1)+ 
(1-\lambda)(f(x_{\lambda}(s))+ Df(x_{\lambda}(s))(y(s)-x_{\lambda}(s)) +R_2)\\
=\lambda\left(f(x_{\lambda}(s))+ \partial_{x_3}f(x_{\lambda}(s))
(1-\lambda)(x_3-y_3- (x_2-y_2)\int_t^s\alpha_1(\tau)d\tau + (x_1-y_1)\int_t^s\alpha_2(\tau)d\tau) +
R_1\right)\\
+(1-\lambda)\bigg(f(x_{\lambda}(s))+ \partial_{x_3}f(x_{\lambda}(s))
\lambda(y_3-x_3+ (x_2-y_2)\int_t^s\alpha_1(\tau)d\tau- (x_1-y_1)\int_t^s\alpha_2(\tau)d\tau) +R_2\bigg)=\\
=f(x_{\lambda}(s))+ \lambda R_1+(1-\lambda)R_2,
\end{multline*}
where $R_1$ and $R_2$ are the error terms of the expansion, namely
\begin{multline*}
\lambda R_1+(1-\lambda)R_2=
\frac{1}{2}\lambda ( (x(s)-x_{\lambda}(s))D^2f(\xi_1)(x(s)-x_{\lambda}(s))^T\\
+\frac{1}{2}(1-\lambda)( (y(s)-x_{\lambda}(s))D^2f(\xi_2)(y(s)-x_{\lambda}(s))^T,
\end{multline*}
for suitable $\xi_1, \xi_2\in \re^3$.

Using relations~\eqref{diff1}-\eqref{diff3} and the $L^2$ uniform estimate of $\alpha$ in \eqref{bd:alpa_L2} (which is independent of $\varepsilon$), we obtain
\begin{equation*}
\left\{\begin{array}{ll}
|x_i(s)-x_{\lambda,i}(s)|\,|x_j(s)-x_{\lambda,j}(s)|\leq C(1-\lambda)^2|x-y|^2&\qquad i,j=1,2,3\\
|y_i(s)-x_{\lambda,i}(s)|\,|y_j(s)-x_{\lambda,j}(s)|\leq C\lambda^2|x-y|^2&\qquad i,j=1,2,3
\end{array}\right.
\end{equation*}
for some positive constant $C$. Then, using Hypothesis \ref{BasicAss_illi}, possibly increasing $C$, we get
$$
\lambda R_1+(1-\lambda)R_2\leq C\lambda(1-\lambda) |x-y|^2,
$$
and, in particular,
$$\lambda f(x(s))+(1-\lambda)f(y(s))\leq f(x_{\lambda}(s))+ C\lambda(1-\lambda) |x-y|^2$$
which amounts to the semiconcavity of $u$.
\end{proof}

\begin{lemma}[Optimal synthesis]\label{lemma:OSeps}
Let~$u_\varepsilon$ be the unique bounded viscosity solution to~\eqref{HJei} founded in Lemma~\ref{3.2}.
\begin{itemize}
\item[a)] Let $\gamma\in AC([t,T])$ be such that
\begin{equation}\label{hyp_OS}
\textrm{$u_\varepsilon(\cdot, s)$ is differentiable at~$\gamma(s)$ for almost every $s\in(t,T)$}
\end{equation}
and
\begin{equation}\label{1802:rmk2_i}
\dot\gamma(t)=-Du_\varepsilon(\gamma(t),t) B^{\varepsilon}(\gamma(t))(B^{\varepsilon}(\gamma(t)))^T,\qquad \gamma(0)=x. 
\end{equation}
Then, the control law $\alpha(s)=-Du_\varepsilon(\gamma(s),s) B^{\varepsilon}(\gamma(s))$ is optimal for $u_\varepsilon(x,t)$.
\item[b)] For~$\varepsilon\neq 0$, if $u_\varepsilon(\cdot,t)$ is differentiable at $x$, then problem~\eqref{1802:rmk2_i} has a unique solution which moreover coincides with the optimal trajectory. In particular, for a.e. $x$, there exists a unique optimal trajectory for~$u_\varepsilon(x,0)$.
\end{itemize}
\end{lemma}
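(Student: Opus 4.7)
For part (a), my plan is to introduce the value-along-trajectory function
$$
\phi(s) := u_\varepsilon(\gamma(s), s) + \int_t^s \left[\tfrac12|\alpha(\tau)|^2 + f(\gamma(\tau),\tau)\right] d\tau, \qquad s \in [t,T],
$$
and show it is constant on $[t,T]$. The Lipschitz continuity of $u_\varepsilon$ in $(x,t)$ (Lemma \ref{LLS}) together with the absolute continuity of $\gamma$ makes $\phi$ absolutely continuous. At every $s\in(t,T)$ where $\gamma$ is differentiable and $u_\varepsilon(\cdot,s)$ is differentiable at $\gamma(s)$ (a set of full measure, thanks to hypothesis \eqref{hyp_OS}), the viscosity solution $u_\varepsilon$ satisfies \eqref{HJei} pointwise. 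Combining this with the chain rule, the feedback identity $\dot\gamma = -Du_\varepsilon\,B^\varepsilon(B^\varepsilon)^T$, and the relation $|\alpha|^2 = |Du_\varepsilon B^\varepsilon|^2$, one computes
$$
\phi'(s) = \partial_t u_\varepsilon - \tfrac12|Du_\varepsilon B^\varepsilon|^2 + f = \partial_t u_\varepsilon - H^\varepsilon(\gamma,Du_\varepsilon) + f = 0 \quad\text{a.e.}
$$
Integration, together with $u_\varepsilon(\cdot,T)=g$, then yields $u_\varepsilon(x,t) = \phi(T) = J_{x,t}(\alpha)$, i.e.\ optimality of~$\alpha$.

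For part (b), I would argue in three steps. \emph{Uniqueness of the optimal trajectory at $(x,t)$:} let $x^*$ be optimal (its existence follows from Remark \ref{2.3}) with costate $p$ from Proposition \ref{prop:pontriagind}; the system \eqref{tage} has $C^1$ right-hand side under Hypothesis \ref{BasicAss_illi}, and the classical identification $p(t)=-Du_\varepsilon(x,t)$ at the differentiability point $x$ pins down the initial data for $(x^*,p)$, so Cauchy--Lipschitz yields uniqueness. \emph{$x^*$ solves \eqref{1802:rmk2_i}:} the identity $\dot x^* = p\,B^\varepsilon(x^*)(B^\varepsilon(x^*))^T$ from \eqref{tage}, together with the extended identification $p(s) = -Du_\varepsilon(x^*(s),s)$ at every $s$ where $u_\varepsilon(\cdot,s)$ is differentiable at $x^*(s)$---a set of full measure in $s$, by the semiconcavity of $u_\varepsilon$ (Lemma \ref{lemmasemic}) and a propagation-of-differentiability argument along the characteristic---gives exactly \eqref{1802:rmk2_i}. \emph{Any AC solution of \eqref{1802:rmk2_i} equals $x^*$:} for such a $\gamma$, invertibility of $B^\varepsilon(B^\varepsilon)^T$ (thanks to $\varepsilon\neq0$) together with the ODE itself forces $Du_\varepsilon(\gamma(s),s)$ to exist for a.e.\ $s$, so hypothesis \eqref{hyp_OS} of part (a) is automatically satisfied; hence $\gamma$ is optimal by (a) and therefore equals $x^*$ by the uniqueness above.

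The hardest step will be the identification $p(s) = -Du_\varepsilon(x^*(s),s)$ a.e.\ along the optimal trajectory, which is the bridge between the Pontryagin formalism (Proposition \ref{prop:pontriagind}) and the Hamilton--Jacobi value function (Lemma \ref{3.2}). It relies on the semiconcavity of $u_\varepsilon$ (Lemma \ref{lemmasemic}) to propagate differentiability of $u_\varepsilon(\cdot,s)$ at $x^*(s)$ from the initial differentiability at $x$, and on the pointwise character of the HJ equation at differentiability points of viscosity solutions. Both are classical in semiconcave viscosity theory, but they require careful invocation given the non-coercivity of $H^\varepsilon$ and the unbounded drift coefficients; the remaining bookkeeping (chain rule for $u_\varepsilon\circ\gamma$ with $u_\varepsilon$ merely Lipschitz, constancy of $\phi$) then follows routinely.
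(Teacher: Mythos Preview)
Your overall strategy matches the paper's (both follow the classical optimal-synthesis template from \cite[Lemma~4.11]{C}), but you wave your hands precisely at the two places where the paper does real work.

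\textbf{Part (a): the chain rule is not routine.} You write $\phi'(s)=\partial_t u_\varepsilon+Du_\varepsilon\cdot\dot\gamma+\tfrac12|\alpha|^2+f$ at almost every~$s$ and call the justification ``bookkeeping''. But $u_\varepsilon$ is merely Lipschitz, and even at points where $D_xu_\varepsilon(\gamma(s),s)$ exists you have no a~priori control on $\partial_t u_\varepsilon$ there; the composite chain rule for Lipschitz functions along curves is exactly the subtle step. The paper first shows that $\gamma$ is bounded and Lipschitz (via the decoupled structure of the first two components of~\eqref{1802:rmk2_i}, which do not see~$\gamma_3$), so that $s\mapsto u_\varepsilon(\gamma(s),s)$ is Lipschitz, and then invokes Lebourg's mean value theorem together with the semiconcavity of~$u_\varepsilon$ and Carath\'eodory's theorem to handle the differentiation. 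That machinery is what makes $\phi'\equiv 0$ rigorous; it is the heart of the argument, not bookkeeping.

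\textbf{Part (b): the identification $p(t)=-Du_\varepsilon(x,t)$ is the computation, not a citation.} You assert it as ``classical'' and build the rest of (b) on it. The paper instead proves directly that $Du_\varepsilon(x,t)\,B^\varepsilon(x)=-\alpha(t)$: it perturbs the initial point in the direction $B^\varepsilon(x)h$ (i.e., $y=(x_1+h_1,x_2+h_2,x_3-x_2h_1+x_1h_2+\varepsilon h_3)$), computes the resulting trajectory explicitly using the Heisenberg structure, differentiates the cost, and then integrates by parts using the costate equations~\eqref{tage}--\eqref{tag:bcd}. Only then does invertibility of $B^\varepsilon$ (for $\varepsilon\neq0$) recover $p(t)$ uniquely, after which Cauchy--Lipschitz on~\eqref{tage} gives uniqueness of the optimal trajectory. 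Your ``propagation-of-differentiability'' argument for $p(s)=-Du_\varepsilon(x^*(s),s)$ at later times is plausible but you would still need to supply it; the paper sidesteps this by noting that the unique optimal $x(\cdot)$ solves~\eqref{1802:rmk2_i} and then appeals to part~(a) for uniqueness.

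Finally, in your Step~3 the invertibility of $B^\varepsilon(B^\varepsilon)^T$ is irrelevant to the existence of $Du_\varepsilon(\gamma(s),s)$: if $\gamma$ solves~\eqref{1802:rmk2_i} then the right-hand side is defined a.e.\ by hypothesis, so~\eqref{hyp_OS} is automatic. Your conclusion there is correct but the reasoning is misplaced.
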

\begin{proof}
We shall follow the same arguments as those used in~\cite[Lemma 4.11]{C} (see also \cite[Lemma 3.5]{AMMT} and \cite[Lemma 3.5]{MMMT} for similar arguments). So we only illustrate the main novelties in the proof and we refer the reader to those papers for the details.
Since $\varepsilon$ is fixed, for simplicity we write ``$u$'' instead of ``$u_\varepsilon$''.\\
$(a)$. Let $\gamma$ be a curve as in the statement; we claim that $\gamma$ is bounded and Lipschitz continuous. Indeed, the differential equation~\eqref{1802:rmk2_i} reads
\begin{equation*}
\gamma_1'=-u_{x_1}+\gamma_2 u_{x_3},\quad \gamma_2'=-u_{x_2}-\gamma_1 u_{x_3},\quad \gamma_3'=\gamma_2 u_{x_1}-\gamma_1 u_{x_2}-(\gamma_1^2+\gamma_2^2+\varepsilon^2) u_{x_3}.
\end{equation*}
Due to the structure of $B^{\epsilon}$, the first two equations do not depend on $\gamma_3$. We define $\xi:=(\gamma_1,\gamma_2)$. By the Cauchy-Schwarz inequality and the Lipschitz continuity of~$u$ found in Lemma~\ref{LLS}, there exists a constant~$C$ such that $|\xi|'\leq C(|\xi|+1)$.
By Gronwall Lemma, we deduce that $\xi$ is bounded and Lipschitz continuous and, consequently, that $\gamma_1$ and $\gamma_2$ are bounded and Lipschitz continuous. Using these properties and again the Lipschitz continuity of~$u$, in the third component of~\eqref{1802:rmk2_i}, we obtain that also $\gamma_3$ is Lipschitz continuous. Our claim is proved.\\
The Lipschitz continuity of~$\gamma$ and of~$u$ entail that the function $s\mapsto u(\gamma(s),s)$ is Lipschitz continuous. The rest of the proof follows the same arguments of the aforementioned papers and it relies on Lebourg's mean value Theorem, Charath\'eodory Theorem and properties of semiconcave functions.\\
$(b)$. We claim that if $Du_\varepsilon(x,t)$ exists then the set $\{\alpha(t),\ \alpha {\text { optimal for } }u(x,t)\}$ is a singleton and that there holds 
$\alpha(t)=-Du_\varepsilon(x,t) B^{\varepsilon}(x)$.
Indeed, if $\alpha(\cdot)$ is optimal for $u_\varepsilon(x,t)$ and $x(\cdot)$ is the corresponding optimal trajectory, then
$$
u_\varepsilon(x,t)= \int_t^T\frac12 |\alpha(s)|^2+ f(x(s),s)\,ds+g(x(T)),
$$
and
$x(\cdot)$ and $\alpha(\cdot)$ satisfy the necessary conditions for optimality proved in Proposition~\ref{prop:pontriagind}.\\
Take $h\in\re^3$ and consider the solution $y(\cdot)$ of (\ref{DYNe}) starting at point $y=(x_1+h_1, x_2+h_2, x_3-x_2h_1+x_1h_2+\epsilon h_3)$ at time~$t$ and obeying to the control~$\alpha$. 
Using the calculations in the proof of point 1. of Lemma \ref{LLS} we get
\begin{eqnarray*}
&&y_1(s)=h_1+x_1(s),\ y_2(s)=h_2+x_2(s)\\
&&y_3(s)=x_3(s)+x_1h_2-x_2h_1+\epsilon h_3- h_2\int_t^s\alpha_1(\tau)\,d\tau+
h_1\int_t^s\alpha_2(\tau)\,d\tau.
\end{eqnarray*}
and
\begin{equation}\label{diffe}
u_\varepsilon(y,t)-u_\varepsilon(x,t)\leq \int_t^T\,(f(y(s), s)-f(x(s),s))\,ds+g(y(T))-g(x(T)).
\end{equation}
From the the differentiability of $u$ w.r.t. $x$ and the arbitrariness of the components of $h$ we get
$$D u_\varepsilon(x, t)B^{\varepsilon}(x)=(I_1,I_2, I_3)$$
where
\begin{eqnarray*}
I_1&=& \int_t^T f_{x_1}(x(s),s)ds+ \int_t^T f_{x_3}(x(s),s)(\int^s_t\alpha_2(\tau)d\tau-x_2)ds+g_{x_1}(x(T))+\\
&&g_{x_3}(x(T))(\int^T_t\alpha_2(\tau)d\tau-x_2),\\
I_2&=& \int_t^T f_{x_2}(x(s),s)ds+ \int_t^T f_{x_3}(x(s),s)(x_1-\int^s_t\alpha_1(\tau)d\tau)ds+g_{x_2}(x(T))+\\
&&g_{x_3}(x(T))(x_1-\int^T_t\alpha_1(\tau)d\tau),\\
I_3&=& \epsilon\int_t^T f_{x_3}(x(s),s)ds+\epsilon g_{x_3}(x(T)).
\end{eqnarray*}
From \eqref{tage} and \eqref{tag:bcd} we have that $f_{x_3}(x(s),s)= p_3^{\prime}(s)$ and $-g_{x_3}(x(T))= p_3(T)$.
Then, integrating by parts and using~\eqref{tagalphae} and~\eqref{tage}, we get 
$I_1=-\alpha_1(t)$, $I_2=-\alpha_2(t)$ and $I_3=-\alpha_3(t)$, i.e. 
$\alpha(t)=-Du_\varepsilon(x,t) B^{\varepsilon}(x)$.
This uniquely determines the value of $\alpha(\cdot)$ at time~$t$.\\
In particular the value of~$p(t)$ is uniquely determined by relations~\eqref{tagalphae} (here, $\varepsilon\ne0$ is needed); hence, system~\eqref{tage} becomes a system of differential equations with condition at time~$t$ and admits a unique solution by the Cauchy-Lipschitz theorem and $x(\cdot)$ is the unique optimal trajectory starting from $x$ associated to the unique optimal control $\alpha(s)$ given by \eqref{tagalphae}. Then from \eqref{tage} $x(\cdot)$ is a solution of \eqref{1802:rmk2_i} and by point a) is the unique solution. 
\end{proof}

%%%%%%%%%%%%%%%%%%%%%%%%

\section{The continuity equation}\label{sect:c_illi}

This section is devoted to study equation \eqref{eq:MFGe}-(ii), namely, to study
\begin{equation}\label{continuitye_illi}
\left\{
\begin{array}{ll} \partial_t m-
\diver(m\, Du B^{\varepsilon}(x)(B^{\varepsilon}(x))^T)=0
&\qquad \textrm{in }\re^3\times (0,T)\\
m(x,0)=m_0(x) &\qquad \textrm{on }\re^3
\end{array}\right.
\end{equation}
where $u$ is the unique bounded (viscosity) solution to problem
\begin{equation}\label{HJe_illi}
\left\{\begin{array}{ll}
-\partial_t u+\frac12 |DuB^{\varepsilon}(x)|^2=F[\overline{m}(t)](x)&\qquad \textrm{in }\re^3\times (0,T)\\
u(x,T)=G[\overline {m}(T)](x)&\qquad \textrm{on }\re^3
\end{array}\right.
\end{equation}
with~$\overline m\in C^{0}([0,T],\mathcal P_1(\re^3))$ (see Lemma~\ref{3.2} for the existence, uniqueness and boundedness of~$u$) .
Since now on, throughout this section, $\varepsilon\in(0,1]$ and $\overline m$ are fixed.

Now we deal with the existence and uniform estimates of the solution $m$ of \eqref{continuitye_illi}.
%
% Teorema di esistenza per l'equazione di continuita'
%
\begin{theorem}\label{prp:m}
Under assumptions (H1)-(H3), for any $\overline m\in C^{0}([0,T],\mathcal P_{1}(\re^3))$, problem~\eqref{continuitye_illi} has a solution $m$
in the sense of Definition~\ref{defsolmfg}.
The function $m$ belongs to $C^{1/2}([0,T],\mathcal P_1(\re^3))\cap L^\infty(0,T;\mathcal P_2(\re^3))$ with a density in~$L^\infty(\re^3\times(0,T))$.
Moreover, $m$ fulfills~\eqref{ambrosioe}-\eqref{flowe} and there exists a constant~$K$ independent of~$\overline m$ and of~$\varepsilon$ such that
\begin{equation}\label{stimeunif}
\|m\|_\infty\leq K, \qquad {\bf d}_1(m(t_1),m(t_2))\leq K|t_2-t_1|^{1/2}, \qquad \int_{\re^3}|x|^2m(x,t)dx\leq K.
\end{equation}
\end{theorem}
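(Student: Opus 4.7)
The plan is to build $m$ as the push-forward of $m_0$ along the characteristic flow of the drift $b(x,t):=-Du_\varepsilon(x,t)B^\varepsilon(x)(B^\varepsilon(x))^T$ and then to verify the three estimates. Since $u_\varepsilon(\cdot,t)$ is Lipschitz by Lemma \ref{LLS} it is differentiable a.e., and the non-degeneracy $\varepsilon\neq 0$ allows applying Lemma \ref{lemma:OSeps}(b): for $m_0$-a.e.\ $x$ the Cauchy problem \eqref{flowe} admits a unique solution $\gamma_{\varepsilon,x}$ coinciding with the unique optimal trajectory starting at $x$. Setting $\eta:=(x\mapsto\gamma_{\varepsilon,x})\#m_0\in\mathcal P(\Gamma)$ and $m(t):=e_t\#\eta$ yields \eqref{ambrosioe} by construction; testing against $\phi\in C^\infty_c(\re^3)$, differentiating $t\mapsto\int\phi(\gamma_{\varepsilon,x}(t))\,m_0(x)\,dx$ under the integral, and using \eqref{flowe} produces the distributional form of \eqref{continuitye_illi}-(i).

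Next I would show that $\bigcup_{\varepsilon,t}\mathrm{supp}\,m(t)$ lies in a fixed compact $K\subset\re^3$. Written componentwise, \eqref{flowe} reads
\begin{equation*}
\gamma_1'=-u_{x_1}+\gamma_2 u_{x_3},\quad \gamma_2'=-u_{x_2}-\gamma_1 u_{x_3},\quad \gamma_3'=\gamma_2 u_{x_1}-\gamma_1 u_{x_2}-(\gamma_1^2+\gamma_2^2+\varepsilon^2)u_{x_3}.
\end{equation*}
Using $\|Du_\varepsilon\|_\infty\le C$ uniformly in $\varepsilon$ (Lemma \ref{LLS}), Gronwall applied to the first two equations yields $|\gamma_1(s)|+|\gamma_2(s)|\le C_T(1+|x_1|+|x_2|)$, and then a direct integration of the third equation controls $|\gamma_3(s)|$ in terms of $|x|$, $T$ and the universal constants. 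Since $\mathrm{supp}\,m_0$ is compact (H3), $\gamma_{\varepsilon,x}([0,T])\subset K$ for all $x\in\mathrm{supp}\,m_0$ and $\varepsilon\in(0,1]$. The second-moment bound follows immediately, and the H\"older estimate ${\bf d}_1(m(t_1),m(t_2))\le K|t_2-t_1|^{1/2}$ is obtained by integrating the $C^{1/2}$-bound \eqref{bd:x_1,x_2} against $m_0$ (whose $1/2$-H\"older constant, of the form $C(1+|x_1|+|x_2|)$, is uniformly bounded on $\mathrm{supp}\,m_0$).

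The remaining and most delicate point is the $L^\infty$ bound on the density. A pleasant feature of the matrix $M^\varepsilon:=B^\varepsilon(B^\varepsilon)^T$ is that each of its columns is divergence-free, so that $\diver_x b=-\tr(D^2u_\varepsilon\,M^\varepsilon)$ with no zeroth-order remainder. The uniform semiconcavity of $u_\varepsilon$ (Lemma \ref{lemmasemic}) gives $D^2 u_\varepsilon\le C\,I$ distributionally, and combined with the positive semidefiniteness of $M^\varepsilon$ and its boundedness on $K$ uniformly in $\varepsilon$, this yields $\diver_x b\ge -C$ on $K$. Propagating this lower bound along the flow through the Jacobian identity $m(t,\Phi_t(x))\det D\Phi_t(x)=m_0(x)$ gives $\|m(t)\|_\infty\le e^{CT}\|m_0\|_\infty$. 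The technical obstacle is that $u_\varepsilon$ is only semiconcave, so $D^2 u_\varepsilon$ is only a signed measure and the Jacobian identity above is merely formal; I would make the argument rigorous by a sup-convolution regularization $u_\varepsilon^\delta$ of $u_\varepsilon$ (which preserves both the Lipschitz constant and the semiconcavity constant), solving the regularized continuity equation classically so that all the previous estimates hold uniformly in $\delta$ and $\varepsilon$, and then letting $\delta\to 0^+$ by compactness of $m^\delta$ in $C^0([0,T];\mathcal P_1(\re^3))$ together with weak$^\ast$ convergence of the densities in $L^\infty(\re^3\times(0,T))$.
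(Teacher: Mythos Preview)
Your approach is correct and takes a genuinely different route from the paper's. The paper builds $m$ through a double approximation --- adding a viscosity term $-\sigma\Delta$ to the whole system and truncating the unbounded entries of $B^\varepsilon$ via a cutoff $\psi_N$ --- so as to land in a setting where parabolic well-posedness, the comparison principle, and a stochastic representation of $m_{\sigma,N}$ are all available (Lemmas~\ref{1802:lemma1}--\ref{1802:lemma4}). The $L^\infty$ bound on $m_{\sigma,N}$ is obtained by the parabolic maximum principle, using the semiconcavity of $u^\sigma$ to bound $\diver(Du^\sigma B^{\varepsilon,N}(B^{\varepsilon,N})^T)$ from above by a constant depending on~$N$; only after sending $\sigma\to0$ and invoking the superposition principle does one see that the solutions $m_N$ all coincide for large $N$ (thanks to the compact support of~$m_0$), which finally makes the bounds uniform.

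Your construction is more direct: you define $m$ immediately as the push-forward along the optimal flow (legitimate by Lemma~\ref{lemma:OSeps}(b)), obtain the uniform compact support by Gronwall on the first two coordinates, and then get the $L^\infty$ bound by the Jacobian argument after a sup-convolution in~$x$. This sidesteps both the viscous layer (no stochastic interpretation needed) and the truncation $B^{\varepsilon,N}$ (you never leave the compact~$K$). The observation that the columns of $M^\varepsilon$ are divergence-free, so that $\diver_x b=-\tr(D^2u_\varepsilon M^\varepsilon)$ with no first-order remainder, is exactly what the paper uses in the proof of~\eqref{1802:lemma2}, only there it is written at the viscous and truncated level.

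One point deserves care in your outline. After regularizing and extracting a subsequential weak-$*$ limit $\tilde m$ of $m^\delta$, you must identify $\tilde m$ with the push-forward $m$ you defined at the outset; otherwise the $L^\infty$ bound is for $\tilde m$, not for $m$. This identification follows from uniqueness of the distributional solution to~\eqref{continuitye_illi} in $C^0([0,T];\mathcal P_1)\cap L^\infty$, which in turn comes from the superposition principle together with Lemma~\ref{lemma:OSeps}(b); the paper makes this step explicit in Lemma~\ref{1802:thm1}. Once you add that identification, your argument is complete; it is shorter and avoids the stochastic machinery, at the price of being specific to the first-order (deterministic) setting, whereas the paper's viscous route is the one that would generalize to second-order problems.
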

The proof of this theorem is postponed at the end of this section. It borrows some arguments of the proof of~\cite[Proposition 3.1]{MMMT} (see also \cite[Theorem 5.1]{C13} and \cite[Theorem 4.20]{C}). More precisely, we shall use a vanishing viscosity approach with the Euclidean Laplacian in the whole system and a truncation argument only in the continuity equation. The vanishing viscosity approach permits to exploit the well posedness of uniformly parabolic equations while the truncation argument permits to overcome the issue of coefficients which grow ``too much'' as $x\to\infty$.\\
For $\sigma \in(0,1]$, we consider the problem
\begin{equation}\label{eq:MFGeN}
\left\{\begin{array}{lll}
(i)&\quad-\partial_t u-\sigma \Delta u+H^{\varepsilon}(x, Du)=F[\overline m(t)](x)&\quad \textrm{in }\re^3\times (0,T)\\
(ii)&\quad\partial_t m-\sigma\Delta m-\diver  (mDu B^{\varepsilon,N}(x)(B^{\varepsilon,N}(x))^T)=0&\quad \textrm{in }\re^3\times (0,T)\\
(iii)&\quad m(x,0)=m_0(x), u(x,T)=G[\overline m(T)](x)&\quad \textrm{on }\re^3,
\end{array}\right.
\end{equation}
where
\begin{equation*}
B^{\varepsilon,N}(x):=  \begin{pmatrix}
\!\!1& 0& 0\!\\
\!\!0&1& 0\!\\
\!\!-\psi_N(x_2)&\psi_N(x_1)&\varepsilon \!
\end{pmatrix}, \qquad \psi_N(\xi):=\left\{\begin{array}{ll}
\xi &\quad \textrm{if }|\xi|\leq N\\0 &\quad \textrm{if }|\xi|\geq 2 N
\end{array}\right.
\end{equation*}
with $\psi_N\in C^2(\re)$, $|\psi_N(\xi)|\leq |\xi|$ for any $\xi\in\re$,  $\|\psi_N\|_{L^\infty}\leq 2N$,
$\|\psi_N'\|_{L^\infty}+\|\psi_N''\|_{L^\infty}\leq K$ ($K$ independent of $N$).

In order to prove Theorem~\ref{prp:m}, it is expedient to establish several properties of the solution~$(u^\sigma,m^\sigma)$ to system~\eqref{eq:MFGeN}: the following lemmata collect existence, uniqueness and other properties of $u^\sigma$ and respectively $m^\sigma$.

\begin{lemma}\label{1802:lemma1}
There exists a unique bounded classical solution $u^\sigma$ to equation~\eqref{eq:MFGeN}-(i) with terminal condition \eqref{eq:MFGeN}-(iii). Moreover there exists a positive constant~$C$ (independent of $\varepsilon$, $\sigma$, $N$ and $\overline m$) such that
\begin{itemize}
\item[a)] $\|u^\sigma\|_\infty\leq C$
\item[b)] $\|D u^\sigma\|_\infty\leq C$, $|\partial_t u^\sigma(t,x)|\leq C (1+|x_1|^2+|x_2|^2)$
\item[c)] $D^2 u^\sigma\leq C$.
\end{itemize}
\end{lemma}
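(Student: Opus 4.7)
Equation~\eqref{eq:MFGeN}-(i) is a uniformly parabolic HJB equation (since $\sigma>0$) with smooth bounded data $F[\overline m(t)],G[\overline m(T)]$ and a Hamiltonian $H^\varepsilon(x,p)=\tfrac12|pB^\varepsilon(x)|^2$ which is smooth, convex in $p$, and of quadratic growth. Existence and uniqueness of a bounded classical solution $u^\sigma\in C^{2,1}(\re^3\times[0,T])$ follow from standard parabolic theory (for instance, a fixed-point iteration on the drift $B^\varepsilon(B^\varepsilon)^T Du^\sigma$ combined with interior Schauder estimates). Equivalently, $u^\sigma$ is the value function of the stochastic control problem with state SDE $dX=B^\varepsilon(X)\alpha\,ds+\sqrt{2\sigma}\,dW$ and running plus terminal cost $\tfrac12|\alpha|^2+F[\overline m(s)](X)$, $G[\overline m(T)](X(T))$. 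Testing this representation against $\alpha\equiv 0$ together with $\|F\|_\infty,\|G\|_\infty\le C$ and the nonnegativity of the kinetic term yields the two-sided $L^\infty$ bound in (a).

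For the spatial Lipschitz bound in (b) and the semiconcavity in (c), I adapt the deterministic arguments of Lemma~\ref{LLS}(1) and Lemma~\ref{lemmasemic} by pathwise coupling. To bound $u^\sigma(y,t)-u^\sigma(x,t)$ (resp.\ $\lambda u^\sigma(x,t)+(1-\lambda)u^\sigma(y,t)-u^\sigma(x_\lambda,t)$), I drive the auxiliary trajectories from the alternate starting points with the \emph{same} control $\alpha^*$ (optimal for $u^\sigma(x,t)$, resp.\ for $u^\sigma(x_\lambda,t)$) and the \emph{same} Brownian motion as the optimal process. Because the diffusion $\sqrt{2\sigma}\,dW$ is additive in the SDE, it drops out of every difference and convex combination under consideration, so the algebraic identities for the Heisenberg controlled drift (the deterministic computations leading to \eqref{diff}--\eqref{diff3}) propagate \emph{verbatim} in pathwise form. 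The $L^2$-bound on $\alpha^*$, obtained by testing against $0$, is uniform in $\varepsilon,\sigma,N,\overline m$; taking expectation and using $\|F\|_{C^2},\|G\|_{C^2}\le C$ delivers the Lipschitz estimate and the semiconcavity with constants independent of the parameters.

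The time-derivative bound in (b) uses the equation pointwise, legitimate since $u^\sigma$ is classical:
\[
\partial_t u^\sigma=H^\varepsilon(x,Du^\sigma)-\sigma\Delta u^\sigma-F[\overline m](x).
\]
The gradient bound yields $0\le H^\varepsilon(x,Du^\sigma)\le \tfrac12|Du^\sigma|^2|B^\varepsilon(x)|^2\le C(1+x_1^2+x_2^2)$; the semiconcavity (c) gives $\Delta u^\sigma\le 3C$, whence $-\sigma\Delta u^\sigma\ge -3C$; combined with $|F|\le C$ these immediately give the one-sided bound $\partial_t u^\sigma\ge -C$. For the matching upper bound $\partial_t u^\sigma\le C(1+x_1^2+x_2^2)$ I introduce the quadratic barrier $W(x,t):=u^\sigma(x,T)+A(T-t)(1+x_1^2+x_2^2)$: since $D(1+x_1^2+x_2^2)\cdot B^\varepsilon(x)=(2x_1,2x_2,0)$, the Hamiltonian $H^\varepsilon(x,DW)$ stays of order $1+x_1^2+x_2^2$ and is absorbed by $-\partial_t W=A(1+x_1^2+x_2^2)$ for $A$ large enough (depending only on $T$ and the data constant $C$). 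The comparison principle then gives $u^\sigma(x,t)\le u^\sigma(x,T)+A(T-t)(1+x_1^2+x_2^2)$, and a symmetric subsolution yields the reverse inequality; passing to the limit of the time increment produces the pointwise bound.

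\textbf{Main obstacle.} The principal difficulty is the uniform-in-$\sigma$ upper bound on $\partial_t u^\sigma$: the equation combined with (c) controls $\sigma\Delta u^\sigma$ only from above, while semiconvexity of $u^\sigma$ does \emph{not} propagate under the convex Hamiltonian $H^\varepsilon$, so the argument cannot simply be reversed. The quadratic-barrier method circumvents this, crucially exploiting the feature of the Heisenberg drift that $D\phi\cdot B^\varepsilon$ produces no $x_3$-growth when $\phi$ depends only on $(x_1,x_2)$.
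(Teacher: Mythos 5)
Your treatment of (a), the spatial Lipschitz bound in (b), and the semiconcavity in (c) agrees with the paper's: represent $u^\sigma$ as the value function of the stochastic control problem \eqref{reprsigma}--\eqref{stocNP}, use the $L^2$ estimate on optimal controls from testing against $\alpha\equiv 0$, and run the alternate trajectories with the same control and the same Brownian path so that the additive noise drops out of every difference and convex combination; this is precisely how the paper reduces the stochastic case to the deterministic computations of Lemmas~\ref{LLS} and \ref{lemmasemic}.

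Where you diverge, and where the gap lies, is the time-Lipschitz bound $|\partial_t u^\sigma|\le C(1+x_1^2+x_2^2)$. The paper gets this by carrying the dynamic-programming argument of Lemma~\ref{LLS}(2) into the stochastic setting, which crucially uses the $L^\infty$ bound on optimal controls of Proposition~\ref{boundalfa}. Your barrier argument, as written, does not yield the pointwise bound on $\partial_t u^\sigma$. The barrier $W(x,t)=u^\sigma(x,T)+A(T-t)(1+x_1^2+x_2^2)$ is anchored at the terminal time, where $\Delta u^\sigma(\cdot,T)=\Delta G[\overline m(T)]$ is bounded in both directions; comparison with $W$ and its ``symmetric'' counterpart only controls $u^\sigma(\cdot,t)-u^\sigma(\cdot,T)$, which bounds the time increment against $t'=T$ but says nothing about $u^\sigma(\cdot,t)-u^\sigma(\cdot,t')$ for generic $t<t'<T$. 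To iterate the barrier from $t'=t_0<T$ you would need the subsolution $u^\sigma(\cdot,t_0)-A(t_0-t)(1+x_1^2+x_2^2)$, whose verification requires a lower bound on $\Delta u^\sigma(\cdot,t_0)$ (equivalently semiconvexity); semiconcavity supplies only the upper bound, a point you yourself flag. So the ``passing to the limit of the time increment'' step closes the estimate only at $t=T$. There is also a secondary issue: you compare a bounded solution against a quadratically growing barrier for a Hamiltonian which is quadratic in $p$ with coefficients growing linearly in $x$, so the comparison must be justified in a class of unbounded sub/supersolutions; the paper's comparison (citing \cite{BU,Lie,DL}) is only invoked with the bounded barriers $w^\pm=\pm C(T+1-t)$, and its value-function/DPP route for (b) and (c) sidesteps the unbounded-comparison issue entirely. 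The cleanest fix is to replace the barrier step by the paper's DPP argument: the upper bound on the time increment is obtained by testing against $\alpha\equiv 0$ (which freezes the state), while the lower bound uses the optimal control and the estimate $|x^*(s)-x|\le C(s-t)(1+x_1^2+x_2^2)$ coming from the $L^\infty$ control bound.
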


\begin{proof}
It is enough to invoke classical results on parabolic equations for the existence of a solution and the comparison principle (see for instance, \cite{BU,Lie} and also \cite{DL}) using super- and subsolutions of the form $w^\pm=\pm C(T+1-t)$. In particular, we get point $(a)$.\\
Moreover, since the coefficients of $B^{\varepsilon}$ have linear growth at infinity, the bounded solution $u^\sigma$ of \eqref{eq:MFGeN}-(i) is the value function of a stochastic optimal control problem, namely
\begin{equation}\label{reprsigma}
u_\sigma(x,t)=\min \mathbb{E}\bigg(\int_t^T\left[\frac12 |\alpha(\tau)|^2+F[\overline m_\tau](Y_\tau)\right]\,d\tau+G[\overline m_T](Y_T)\bigg)
\end{equation}
where, in $[t,T]$,  $Y(\cdot)$ obeys to a stochastic differential equation
\begin{equation}
\label{stocNP}
dY=\alpha(t) B^{\varepsilon}(Y_t)^Tdt +\sqrt{2\sigma} dW_{t},\qquad Y_t=x
\end{equation}
 and $W_t$ is a standard $3$-dimensional Brownian motion.
Following the procedure used in Lemma \ref{LLS}, point (1), for the deterministic case, we can prove the locally uniform Lipschitz continuity of $u^\sigma$. Actually when evaluating the difference between two random variables obeying to \eqref{stocNP} with the same control, the contribution of the diffusion term is null. Hence we get the first bound of~$(b)$.
Similarly, still using formula \eqref{reprsigma} and the procedure used in Lemma \ref{LLS}, point (2), 
we get the second bound of~$(b)$; using a procedure as in Lemma \ref{lemmasemic} we get
the uniform semiconcavity of $u^\sigma$, namely point $(c)$.
\end{proof}

\begin{lemma}\label{1802:lemma3}
There exists a unique bounded classical solution $m_{\sigma,N}$ to problem \eqref{eq:MFGeN}-(ii) with initial condition \eqref{eq:MFGeN}-(iii). Moreover, there exists a constant~$C_N$ (independent of~$\varepsilon$, $\sigma$ and~$\overline m$) such that $0<m_{\sigma,N}\leq C_N$ in $\re^3\times (0,T)$.
\end{lemma}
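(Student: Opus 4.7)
\medskip

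\textbf{Proof plan.} The plan is to proceed in three steps: establish existence and uniqueness of a bounded classical solution $m_{\sigma,N}$, then positivity, and finally the uniform $L^\infty$ bound, which is the heart of the lemma.

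For existence and uniqueness I would appeal to classical linear parabolic theory. Equation~\eqref{eq:MFGeN}-(ii) is a linear Cauchy problem in $\re^3\times(0,T)$ with uniformly elliptic principal part $-\sigma\Delta$ and drift
\begin{equation*}
V:=Du^\sigma B^{\varepsilon,N}(x)(B^{\varepsilon,N}(x))^T.
\end{equation*}
By Lemma~\ref{1802:lemma1}, $u^\sigma$ is smooth with $\|Du^\sigma\|_\infty\le C$; by construction of $\psi_N$, the entries of $B^{\varepsilon,N}$ and their first derivatives are bounded by a constant depending only on $N$. Hence the coefficients of \eqref{eq:MFGeN}-(ii) are smooth and globally bounded, and standard references on linear uniformly parabolic Cauchy problems with bounded smooth coefficients (e.g.~\cite{Lie}) yield a unique bounded classical solution $m_{\sigma,N}$ assuming the continuous compactly supported datum $m_0$.

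For positivity I would rewrite the equation in non-divergence form as
\begin{equation*}
\partial_t m_{\sigma,N}-\sigma\Delta m_{\sigma,N}-V\cdot\nabla m_{\sigma,N}=m_{\sigma,N}\,\diver V,
\end{equation*}
where $\diver V$ is bounded. Since $m_0\ge 0$ is not identically zero, the change of unknown $e^{-\lambda t}m_{\sigma,N}$ with $\lambda$ large enough removes the (possibly positive) zeroth-order contribution, and the weak maximum principle on $\re^3$ (applicable thanks to the boundedness of $m_{\sigma,N}$ and of the coefficients) gives $m_{\sigma,N}\ge 0$; the strong maximum principle then upgrades this to strict positivity in $\re^3\times(0,T)$.

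The core of the proof is the bound $m_{\sigma,N}\le C_N$ uniform in $\sigma,\varepsilon,\overline m$. I would compute
\begin{equation*}
\diver V=\tr\bigl(D^2u^\sigma\cdot B^{\varepsilon,N}(B^{\varepsilon,N})^T\bigr)+R,
\end{equation*}
where the remainder $R$ involves only $Du^\sigma$ (bounded uniformly by Lemma~\ref{1802:lemma1}(b)) and first derivatives of $B^{\varepsilon,N}$ (bounded by a constant depending only on $N$, since $\|\psi'_N\|_\infty\le K$), so $|R|\le C_N$ uniformly. For the trace term I would use the semiconcavity bound $D^2u^\sigma\le CI$ from Lemma~\ref{1802:lemma1}(c) together with $B^{\varepsilon,N}(B^{\varepsilon,N})^T\ge 0$, which yields
\begin{equation*}
\tr\bigl(D^2u^\sigma\cdot B^{\varepsilon,N}(B^{\varepsilon,N})^T\bigr)\le C\,\tr\bigl(B^{\varepsilon,N}(B^{\varepsilon,N})^T\bigr)\le C_N.
\end{equation*}
Altogether $\diver V\le C_N$ uniformly. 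Setting $\tilde m:=e^{-C_N t}m_{\sigma,N}$, the non-divergence form of the equation shows that $\tilde m$ is a bounded subsolution of
\begin{equation*}
\partial_t\tilde m-\sigma\Delta\tilde m-V\cdot\nabla\tilde m\le 0
\end{equation*}
with $\tilde m(\cdot,0)=m_0$. The comparison principle for bounded solutions on $\re^3$ then gives $\tilde m\le\|m_0\|_\infty$, whence $m_{\sigma,N}\le e^{C_N T}\|m_0\|_\infty=:C_N'$.

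The main obstacle is precisely the uniformity in $\sigma$ of the estimate on $\diver V$: only a one-sided Hessian bound on $u^\sigma$ is available (semiconcavity), and any two-sided estimate would deteriorate as $\sigma\to 0$ and spoil the constant. The key is to exploit the positive semidefinite structure of $B^{\varepsilon,N}(B^{\varepsilon,N})^T$ so that the one-sided bound $D^2u^\sigma\le CI$ translates into a one-sided bound on the trace term, and hence on $\diver V$.
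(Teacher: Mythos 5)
Your proof is correct and follows essentially the same approach as the paper: rewrite the equation in non-divergence form, exploit the one-sided Hessian bound $D^2u^\sigma\le CI$ together with the positive semidefinite structure of $B^{\varepsilon,N}(B^{\varepsilon,N})^T$ to get a one-sided bound on the zeroth-order coefficient $\diver V$ uniform in $\sigma$, and conclude by comparison with an exponential supersolution. The only minor differences are cosmetic: the paper computes $\diver V$ directly and, because the entries of $A=B^{\varepsilon,N}(B^{\varepsilon,N})^T$ have the special structure that $A_{ij}$ never depends on $x_i$ or $x_j$, the first-order remainder $R$ you introduce in fact vanishes identically (so the paper writes $\diver V=\tr(AD^2u^\sigma)=\sum_i\xi_iD^2u^\sigma\,\xi_i^T$ with $\xi_i$ the columns of $B^{\varepsilon,N}$); and for positivity the paper invokes the parabolic Harnack inequality rather than the maximum-principle argument you give, both of which are standard and valid here.
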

\begin{proof}
Since~$\sigma$ is fixed, for simplicity we write ``$u$'' instead of ``$u_\sigma$''. By the regularity of $u$ (see Lemma~\ref{1802:lemma1}), the equation~\eqref{eq:MFGeN}-(ii) can be written as
\begin{equation}\label{eqsciolta}
\partial_t m-\sigma \Delta m-Dm\cdot(Du B^{\varepsilon,N}(B^{\varepsilon,N})^T)-m \diver(Du B^{\varepsilon,N}(B^{\varepsilon,N})^T)=0.
\end{equation}
By Lemma~\ref{1802:lemma1}-(b), we have 
$\|Du B^{\varepsilon,N}(B^{\varepsilon,N})^T\|_{\infty}\leq CN^2$ and also, using standard estimate on the heat kernel, $\|D^2u\|_\infty\leq C$ (where $C$ depends on $\sigma$). By standard results on parabolic equations, we infer existence and uniqueness of a bounded solution~$m_{\sigma,N}$ to~\eqref{eq:MFGeN}.
From the assumptions on $m_0$ and Harnack inequality  (see for example \cite[Theorem 2.1, p.13]{LSU}) we get that $m_{\sigma,N}(\cdot,t)>0$ for $t>0$.

Let us now prove that $m_{\sigma,N}$ fulfills an upper bound, which is independent of~$\varepsilon$, $\sigma$ and $\overline m$.
To this end, we assume for the moment that there exists a positive constant~$k_N$ (independent of $\varepsilon$, $\sigma$ and $\overline m$) such that
\begin{equation}\label{1802:lemma2}
\diver(Du B^{\varepsilon,N}(B^{\varepsilon,N})^T)\leq k_N.
\end{equation}
By estimate \eqref{1802:lemma2} the positivity of $m_{\sigma,N}$ and assumption \eqref{H4'}, from equation \eqref{eqsciolta} we get
\begin{displaymath}
 \partial_t m-\sigma \Delta m-Dm\cdot(Du B^{\varepsilon,N}(B^{\varepsilon,N})^T)-m k_N\leq 0,\qquad m(x, 0)\leq C.  
\end{displaymath}
Then, using the comparison principle with a supersolution of the form $w=C'e^{ C' t}$ (with $C'$ depending only on $k_N$), we obtain that $m_{\sigma,N}\leq C_N$.\\
It remains to prove~\eqref{1802:lemma2}. We denote by $I$ the left hand side. There holds
\begin{eqnarray*}
I&=&\partial_{11} u-2\psi_N(x_2)\partial_{13}u+\partial_{22}u+2\psi_N(x_1)\partial_{23}u+(\psi_N(x_2)^2+\psi_N(x_1)^2+\varepsilon^2)\partial_{33}u\\
&=& \xi_1 D^2u\xi_1^T+ \xi_2 D^2u\xi_2^T+ \xi_3 D^2u\xi_3^T
\end{eqnarray*}
where $\xi_1=(1,0,-\psi_N(x_2))$, $\xi_2=(0,1,\psi_N(x_1))$ and $\xi_3=(0,0,\varepsilon)$. By Lemma~\ref{1802:lemma1}-(c), we obtain
\[
I\leq C(2+2\|\psi_N\|_\infty^2 +\varepsilon^2)\leq C(3+8N^2)
\]
where the last inequality is due to our assumption on $\psi_N$. Choosing $k_N=C(3+8N^2)$ we accomplish the proof of~\eqref{1802:lemma2}.
\end{proof}

\begin{lemma}\label{1802:lemma4}
There exists a constant $K_N$, depending on~$N$ (but independent of~$\varepsilon$, $\sigma$ and $\overline m$) such that the function~$m_{\sigma,N}$ found in Lemma~\ref{1802:lemma3} verifies
\begin{itemize}
\item[a)] ${\bf d}_1(m_{\sigma,N}(t_1),m_{\sigma,N}(t_2))\leq K_N(t_2-t_1)^{1/2}$ for any $0\leq t_1<t_2\leq T$
\item[b)] $\int_{\re^3}|x|^2m_{\sigma,N}(x,t)dx\leq K_N(\int_{\re^3}|x|^2dm_0(x)+1)$.
\end{itemize}
\end{lemma}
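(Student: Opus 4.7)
\begin{Proofc}{Plan of proof.}
The plan is to identify $m_{\sigma,N}$ with the law of a stochastic process and then derive both estimates by standard Brownian/Gronwall arguments. Since $u^\sigma$ is a bounded classical solution of the parabolic HJ equation (Lemma~\ref{1802:lemma1}), its gradient $Du^\sigma$ is $C^1$ in $x$; combined with the $C^2$ regularity of $B^{\varepsilon,N}$, the drift
$$ b(x,t):=-Du^\sigma(x,t)\,B^{\varepsilon,N}(x)(B^{\varepsilon,N}(x))^T $$
is locally Lipschitz and globally bounded by a constant $C_N$ depending only on $N$ and on the constant $C$ from Lemma~\ref{1802:lemma1}-(b) (in particular, independent of $\varepsilon$, $\sigma$ and $\overline m$). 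The SDE
\begin{equation*}
dY_t=b(Y_t,t)\,dt+\sqrt{2\sigma}\,dW_t,\qquad Y_0\sim m_0,
\end{equation*}
admits then a unique strong solution, and Itô's formula shows that the law $\mu_t:=\mathrm{Law}(Y_t)$ is a bounded distributional solution of~\eqref{eq:MFGeN}-(ii). By the uniqueness part of Lemma~\ref{1802:lemma3} we conclude $m_{\sigma,N}(\cdot,t)=\mu_t$.

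For part~(a), I would use the coupling bound
$$ {\bf d}_1(m_{\sigma,N}(t_1),m_{\sigma,N}(t_2))\leq \mathbb E|Y_{t_2}-Y_{t_1}|. $$
Writing $Y_{t_2}-Y_{t_1}=\int_{t_1}^{t_2}b(Y_s,s)\,ds+\sqrt{2\sigma}(W_{t_2}-W_{t_1})$, the deterministic part is bounded by $C_N(t_2-t_1)$, while for the Brownian increment one has $\mathbb E|W_{t_2}-W_{t_1}|\leq \sqrt{3(t_2-t_1)}$. Since $\sigma\leq 1$ and $t_2-t_1\leq T$, both contributions are controlled by $K_N(t_2-t_1)^{1/2}$ for a constant $K_N$ independent of $\varepsilon$, $\sigma$ and $\overline m$.

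For part~(b), I would apply Itô's formula to $|Y_t|^2$:
$$ \mathbb E|Y_t|^2=\mathbb E|Y_0|^2+2\int_0^t\mathbb E\bigl[Y_s\cdot b(Y_s,s)\bigr]\,ds+6\sigma t. $$
Using $|b|\leq C_N$ and the Cauchy--Schwarz inequality $|Y_s\cdot b|\leq \tfrac12(C_N^2+|Y_s|^2)$, together with $\sigma\leq 1$ and $\mathbb E|Y_0|^2=\int_{\re^3}|x|^2dm_0(x)$, Gronwall's lemma yields
$$ \int_{\re^3}|x|^2m_{\sigma,N}(x,t)\,dx=\mathbb E|Y_t|^2\leq K_N\Bigl(\int_{\re^3}|x|^2dm_0(x)+1\Bigr), $$
with $K_N$ depending only on $N$ and $T$.

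The only delicate point is really the identification step: one must check that $\mu_t$ is a \emph{bounded} distributional solution of~\eqref{eq:MFGeN}-(ii) in order to apply the uniqueness in Lemma~\ref{1802:lemma3}. Boundedness of the density follows from the non-degenerate diffusion $\sqrt{2\sigma}\,dW_t$ and the boundedness of the drift by $C_N$ (so that standard heat-kernel estimates for the associated Kolmogorov semigroup give an $L^\infty$ bound on the density once $m_0$ is bounded, as assumed in (H3)); alternatively, one can first mollify $m_0$, derive the estimates, and pass to the limit using the uniform-in-approximation bounds. All other steps are routine probabilistic manipulations, and the crucial feature is that the dependence on $\varepsilon$, $\sigma$ and $\overline m$ enters only through the uniform gradient bound of Lemma~\ref{1802:lemma1}, hence disappears in $K_N$.
\end{Proofc}
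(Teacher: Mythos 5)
Your proposal is correct and follows essentially the same route as the paper: identify $m_{\sigma,N}$ with the law of the SDE with bounded drift, use the coupling bound $\mathbf{d}_1\le\mathbb E|Y_{t_2}-Y_{t_1}|$ for (a), and compute the second moment for (b). The only cosmetic difference is that for (b) you give a self-contained It\^o/Gronwall argument where the paper simply cites a moment estimate from Karatzas--Shreve, and you are somewhat more explicit about the identification step (which the paper also handles, via the cited references).
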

\begin{proof}
We take account that $m_{\sigma,N}$ can be interpreted as the law of the following stochastic process
\begin{equation}\label{stocNPm}
d Y^x_t= -Du (Y^x_t,t) B^{\varepsilon,N}(Y^x_t)(B^{\varepsilon,N}(Y^x_t))^T\, dt +\sqrt{2\sigma} d W_t,\qquad Y^x_0=Z_0,
\end{equation}
where ${\mathcal L}(Z_0)=m_0$.\\
By standard arguments, setting
\begin{equation}
\label{mstoch}
m_{\sigma,N}(t):={\mathcal L}(Y_t),
\end{equation}
we know that $m_{\sigma,N}(t)$ is absolutely continuous with respect to Lebesgue measure, and that if $m_{\sigma,N}(\cdot,t)$ is the density of $m_{\sigma,N}(t) $, then $m_{\sigma,N}$   is the  weak solution to~\eqref{eq:MFGeN}-(ii) with $m_{\sigma,N}|_{t=0}=m_0$ (from Ito's Theorem, since the drift is bounded, Proposition 3.6 Chapter 5 \cite{KS}, p.303, and the book \cite{Kr}). Here we have used the bound on $\|Du^\sigma\|$ given in Lemma \ref{1802:lemma1}.\\
b) Noting that
    $$\int_{\re^{3}}|x|^2 dm_{\sigma,N}(t)(x)= 
     {{\mathbb E}}(|Y_t|^2),$$ 
     the desired estimate
    can be obtained applying \cite[Estimate 3.17, p.306]{KS}(see also p. 389) .\\
a)
For $t_2\geq t_1$, it is well known (for instance, see \cite[Lemma 3.4 (proof)]{C}) that
$${\bf d}_1(m_{\sigma,N}(t_1),m_{\sigma,N}(t_2))\leq  {{\mathbb E}}(|Y_{t_1}-Y_{t_2}|).$$
Using this inequality and the boundedness of the drift term in \eqref{stocNPm} with a constant $C_N$ we get
\begin{displaymath}
  \begin{array}[c]{rcl}
&&\mathbb{E}(|Y_{t_1}-Y_{t_2}|)\leq  \ds  \mathbb{E}\left(\int_{t_1}^{t_2} C_N d\tau+ \sqrt {2\sigma} |B_{t_2}-B_{t_1}|\right)
\leq K_N\sqrt{t_2-t_1},
\end{array}
\end{displaymath}
where we have used estimate \cite[(3.17) p. 306]{KS} to the term $|B_{t_2}-B_{t_1}|$.
\end{proof}
\vskip .2cm
Now we let $\sigma\rightarrow 0$ and we consider the problem
\begin{equation}\label{continuityeN}
\left\{
\begin{array}{ll} \partial_t m-
\diver(m\, Du B^{\varepsilon,N}(x)(B^{\varepsilon,N}(x))^T)=0
&\qquad \textrm{in }\re^3\times (0,T)\\
m(x,0)=m_0(x) &\qquad \textrm{on }\re^3
\end{array}\right.
\end{equation}
where $u$ is the unique bounded solution to problem~\eqref{HJe_illi}.
\begin{lemma}\label{1802:thm1}
For $N$ sufficiently large, problem~\eqref{continuityeN} admits exactly one solution~$m_{N}$ in the space $C^{1/2}([0,T],\mathcal P_1(\re^3))\cap L^\infty(0,T;\mathcal P_2(\re^3))$.\\
Moreover, the solution~$m_{N}$ has a density in $L^\infty(\re^3\times(0,T))$ and it is the image of the initial distribution through the flow
\begin{equation}\label{1802:rmk2}
\dot\gamma(t)=-Du(\gamma(t),t) B^{\varepsilon}(\gamma(t))(B^{\varepsilon}(\gamma(t)))^T,\qquad \gamma(0)=x
\end{equation}
which is uniquely determined for $m_0$-a.e. $x\in\re^3$.
\end{lemma}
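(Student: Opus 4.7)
The plan is to obtain $m_N$ as the vanishing viscosity limit $\sigma\to 0^+$ of the family $\{m_{\sigma,N}\}$ constructed in Lemma~\ref{1802:lemma3}, and then to represent it via the characteristic flow by combining the superposition principle with the optimal synthesis of Lemma~\ref{lemma:OSeps}.

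For the passage to the limit, the bounds of Lemmas~\ref{1802:lemma3}--\ref{1802:lemma4} are uniform in $\sigma$ (although they may depend on $N$): $\|m_{\sigma,N}\|_{\infty}\le C_N$, ${\bf d}_1(m_{\sigma,N}(t_1),m_{\sigma,N}(t_2))\le K_N|t_2-t_1|^{1/2}$, and $\int_{\re^3}|x|^2\,dm_{\sigma,N}(x,t)\le K_N$. Together with the tightness coming from the second moment bound, this yields a subsequence $m_{\sigma,N}\to m_N$ in $C^0([0,T];\mathcal P_1(\re^3))$ whose density is bounded in $L^\infty(\re^3\times(0,T))$. Concurrently, standard vanishing viscosity theory for Hamilton-Jacobi equations gives $u^\sigma\to u$ locally uniformly; the uniform semiconcavity and uniform Lipschitz bound on $\{u^\sigma\}$ from Lemma~\ref{1802:lemma1} then imply $Du^\sigma\to Du$ at every point of differentiability of~$u$, hence Lebesgue-almost everywhere. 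Dominated convergence yields strong $L^1_{\rm loc}$ convergence of $Du^\sigma B^{\varepsilon,N}(B^{\varepsilon,N})^T$, and combining this with the weak-$*$ convergence of $m_{\sigma,N}$ one passes to the limit in the distributional formulation of~\eqref{eq:MFGeN}-(ii); the viscous contribution $\sigma\int m_{\sigma,N}\Delta\phi$ vanishes by boundedness of $m_{\sigma,N}$. Thus $m_N$ solves~\eqref{continuityeN} with density in $L^\infty$.

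For the flow representation, boundedness of the drift $V^N(x,t):=-Du(x,t)B^{\varepsilon,N}(x)(B^{\varepsilon,N}(x))^T$ together with that of $m_N$ allows one to invoke the superposition principle \cite[Theorem 8.2.1]{AGS}, which provides $\eta\in\mathcal P(\Gamma)$ concentrated on integral curves of $V^N$ and satisfying $m_0=e_0\#\eta$, $m_N(t)=e_t\#\eta$. Next, Lemma~\ref{lemma:OSeps}(b) applied with the \emph{untruncated} matrix $B^\varepsilon$ (recall that $B^\varepsilon(B^\varepsilon)^T$ is invertible since $\varepsilon\neq 0$) ensures that for $m_0$-a.e.\ $x$ the ODE $\dot\gamma=-Du\,B^\varepsilon(\gamma)(B^\varepsilon(\gamma))^T$, $\gamma(0)=x$, has a unique solution $\gamma_x$, which coincides with the optimal trajectory for $u_\varepsilon(x,0)$. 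Since $m_0$ has compact support and optimal controls are sublinear in~$x$ by Proposition~\ref{boundalfa}, a Gronwall argument yields a uniform bound $|\gamma_x(t)|\le R_0$ for every $x$ in the support of $m_0$ and every $t\in[0,T]$. Choosing $N\geq R_0$ makes the truncation inactive along each $\gamma_x$: $V^N(\gamma_x(t),t)$ then coincides with $-Du\,B^\varepsilon(\gamma_x)(B^\varepsilon(\gamma_x))^T$, so $\gamma_x$ is itself the unique integral curve of $V^N$ starting at~$x$. This forces $\eta=(x\mapsto \gamma_x)\#m_0$ and $m_N(t)=\gamma_\cdot(t)\#m_0$. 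Since the same reasoning applies to any other solution in the stated class, uniqueness follows.

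The main obstacle is the last step of the previous paragraph, namely matching the characteristics produced by the superposition principle for the truncated field~$V^N$ with the untruncated optimal characteristics $\gamma_x$. This requires simultaneously the a priori bound on the trajectories of $B^\varepsilon$ (so that the truncation becomes inactive for $N$ large) and the a.e.\ uniqueness of integral curves from Lemma~\ref{lemma:OSeps}(b), which in turn relies on the invertibility of $B^\varepsilon(B^\varepsilon)^T$, and therefore on $\varepsilon$ being strictly positive.
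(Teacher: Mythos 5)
Your overall architecture (vanishing viscosity limit, superposition principle, then matching truncated and untruncated characteristics via a compact-support argument) is the same as the paper's, but there is a gap in the final matching step. You show that the \emph{untruncated} optimal trajectory $\gamma_x$ stays in a fixed ball $\{|x|\le R_0\}$ for $m_0$-a.e.\ $x$, and that for $N\ge R_0$ it is therefore also an integral curve of the truncated field $V^N$. From this you conclude that ``$\gamma_x$ is itself the unique integral curve of $V^N$ starting at $x$'', and hence that $\eta=\gamma_\cdot\#m_0$. But the superposition principle only tells you that $\eta$ is concentrated on \emph{some} integral curves of $V^N$; since $V^N$ is merely bounded (and $Du$ is not Lipschitz, only semiconcave), uniqueness of these integral curves is not automatic, and nothing in your argument rules out an integral curve of $V^N$ that leaves the ball $\{|x|\le N\}$, where the truncation is active and your identification with the untruncated dynamics breaks down. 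Knowing that $\gamma_x$ is \emph{a} solution of the $V^N$-ODE is not the same as knowing it is the \emph{only} one.

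The paper closes exactly this hole in the opposite direction: it first shows, via the Gronwall argument of Lemma~\ref{lemma:OSeps}(a) applied to the \emph{truncated} system and the bound $|\psi_N(\xi)|\le|\xi|$, that \emph{every} solution of~\eqref{1802:rmk2N} starting from $\mathrm{supp}\,(m_0)$ is confined to a box $[-k,k]^3$ with $k$ independent of~$N$. Then, for $N\ge k$, every such solution lies where the truncation is inactive, hence is a solution of the untruncated ODE~\eqref{1802:rmk2}, and the a.e.\ uniqueness of Lemma~\ref{lemma:OSeps}(b) finally applies. To repair your proof, replace the Gronwall bound on the optimal trajectory $\gamma_x$ (via Proposition~\ref{boundalfa}) by a Gronwall bound, uniform in~$N$, on \emph{all} solutions of the truncated ODE with initial data in $\mathrm{supp}\,(m_0)$, using only $\|Du\|_\infty\le C$ and the linear growth of~$\psi_N$; the rest of your argument then goes through.
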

\begin{proof}
Fix $N$ sufficiently large (it will be suitably chosen later on). For any $\sigma\in(0,1]$, let $(u_\sigma,m_{\sigma,N})$ be the unique classical bounded solution to system~\eqref{eq:MFGeN} (see Lemma~\ref{1802:lemma1} and Lemma~\ref{1802:lemma3}).
Letting $\sigma\to0^+$, by the estimates in Lemma~\ref{1802:lemma1}, in Lemma~\ref{1802:lemma3} and in Lemma~\ref{1802:lemma4}, possibly passing to a subsequence that we still denote $(u_\sigma,m_{\sigma,N})$, we get that the functions~$u_\sigma$ converge locally uniformly to the unique solution~$u$ to~\eqref{HJe_illi} while~$m_{\sigma,N}$ converge to a function $m_{N} \in C^{1/2}([0,T],\mathcal P_1(\re^3))\cap L^\infty(0,T;\mathcal P_2(\re^3))$ in the $C^{0}([0,T],\mathcal P_1(\re^3))$-topology and in the weak-$*$ topology of $L^\infty(\re^3\times(0,T))$. By the same arguments as those in
~\cite[Theorem 4.20 (proof)]{C},
in particular the uniform semiconcavity, we obtain that $m_{N}$ is a solution to~\eqref{continuityeN}.\\
Let us now establish uniqueness and representation formula for the solution~$m_{N}$. We observe that, by Lemma~\ref{1802:lemma1}-(b), the drift verifies $\|Du B^{\varepsilon,N}(B^{\varepsilon,N})^T\|_{\infty}\leq CN^2$ and in particular condition~\cite[eq. (8.1.20)]{AGS} is fulfilled. Hence, we can apply the superposition principle in~\cite[Theorem 8.2.1]{AGS}: there exists a measure $\eta_N$ on $\re^3\times \Gamma$ such that
\begin{itemize}
\item[1)] $m_N(t)=e_t\#\eta_N$ for all $t\in(0,T)$ (recall: $e_t(x,\gamma)=\gamma(t)$)
\item[2)] $\eta_N =\int_{\re^3}(\eta_N)_x dm_0(x)$ where, for $m_0$-a.e. $x\in\re^3$, the measure $(\eta_N)_x$ is concentrated on the set of pairs $(x,\gamma)\in\re^3\times \Gamma$ where $\gamma$ solves
\begin{equation}\label{1802:rmk2N}
\dot\gamma(t)=-Du(\gamma(t),t) B^{\varepsilon,N}(\gamma(t))(B^{\varepsilon,N}(\gamma(t)))^T\quad\textrm{a.e. }t\in(0,T),\qquad \gamma(0)=x.
\end{equation}
\end{itemize}
We now claim that, for $m_0$-a.e. $x\in\re^3$, the solutions to~\eqref{1802:rmk2N} coincide with those of~\eqref{1802:rmk2}. 

The first two components of system \eqref{1802:rmk2N} satisfy
\begin{equation*}
\gamma_1'=-u_{x_1}+\psi_N(\gamma_2) u_{x_3},\quad \gamma_2'=-u_{x_2}-\psi_N(\gamma_1) u_{x_3}.
\end{equation*}
Since the cut-off function $\psi_N$ grows at most linearly, using arguments similar to those in the proof of Lemma~\ref{lemma:OSeps}-(a), we obtain that any solution~$\gamma$ to~\eqref{1802:rmk2N} is bounded uniformly in~$N$.
Since $m_0$ has compact support, there exists a positive constant~$k$ (independent of~$N$) such that, for $m_0$-a.e. $x\in\re^3$, any solution~$\gamma$ to~\eqref{1802:rmk2N} verifies $\gamma(t)\in [-k,k]^3$ for any $t\in(0,T)$. Hence, choosing $N\geq k$, we get that problem~\eqref{1802:rmk2N} coincides with~\eqref{1802:rmk2} if $x\in\textrm{supp}(m_0)$.\\
It remains only to prove that, for $m_0$-a.e. $x\in\re^3$, problem~\eqref{1802:rmk2} admits exactly one solution. To this end, it is enough to invoke~Lemma~\ref{lemma:OSeps} and taking into account that~$u(\cdot,0)$ is Lipschitz continuous.
\end{proof}
Now we let $N\rightarrow +\infty$ and we establish Theorem \ref{prp:m} exploiting that $m_N$ have compact support independent on $N$ which in turn is due to compactness of~$\textrm{supp} (m_0)$.

\begin{proof}[Theorem \ref{prp:m}]
By Lemma~\ref{1802:thm1} all the solutions~$m_{N}$ to~\eqref{continuityeN} coincide if $N$ is sufficiently large. Hence, passing to the limit as $N\to\infty$, we obtain that problem~\eqref{continuitye_illi} admits a solution $m$ in the space $C^{1/2}([0,T],\mathcal P_1(\re^3))\cap L^\infty(0,T;\mathcal P_2(\re^3))$ with a density in~$L^\infty(\re^3\times(0,T))$. Finally, the estimates follow from the corresponding estimates in Lemma~\ref{1802:lemma4}.
\end{proof}

\section{Proof of Proposition \ref{mainproe} and Theorem~\ref{thm:main_illi}}\label{sect:pr_illi}
\begin{proof}[Proposition \ref{mainproe}]
We achieve the proof through a fixed point argument as in~\cite[Theorem 1.1 (proof)]{MMMT} or in \cite[Theorem 2.1]{AMMT}
taking advantage of the results of Lemma~\ref{1802:thm1}. We sketch here some detail of the proof omitting the index $\epsilon$ since it is fixed.
Consider the set
$$
{\cal C} :=\{m\in C^{1/2}([0,T], {{\cal P}_1(\re^3)});\, m(0)=m_0\}
$$ endowed with the norm of~$C^0([0,T]; {\mathcal P}_1(\re^3))$.
Observe that it is a nonempty closed and convex subset of~$C^0([0,T]; {\mathcal P}_1(\re^3))$.
We introduce a map ${\cal T}$ as follows: to any $m\in {\cal C}$ we associate the solution~$u$ to problem~\eqref{HJei} with $f(x,t)=F[m(t)](x)$
and $g(x)=G[m(T)](x)$ and to this $u$ we associate the solution~$\mu=:{\cal T}(m)$ to problem \eqref{continuitye_illi}.\\
By Theorem \ref{prp:m}, the function ${\cal T}(m)$ belongs to ${\cal C}$ hence ${\cal T}$ maps~${\mathcal C}$ into itself.
We claim that the map~${\cal T}$ has the following properties:
\begin{itemize}
\item[(a)] ${\cal T}$ is a continuous map with respect to the norm of~$C^0([0,T]; {\mathcal P}_1)$
\item[(b)] ${\cal T}$ is a compact map.
\end{itemize}
(a) It suffices to follow the same arguments as those in \cite[Lemma 4.19]{C} or in \cite[Theorem 2.1]{AMMT}.\\
(b) Since ${\mathcal C}$ is closed, it is enough to prove that ${\cal T}({\mathcal C})$ is a precompact subset of $C^0([0,T]; {\mathcal P}_1)$. Let~$(\mu_n)_n$ be a sequence in~${\cal T}({\mathcal C})$ with $\mu_n={\cal T}(m_n)$ for some~$m_n\in{\mathcal C}$;
we wish to prove that, possibly for a subsequence,  $\mu_n$ converges to some $\mu$ in the $C^0([0,T]; {\mathcal P}_1(\R^{3}))$-topology as $n\to\infty$.
The functions~${\cal T}(m_n)$ satisfy the estimates \ref{stimeunif} of Theorem \ref{prp:m} with a constant independent of $n$. 
Since the subsets of  $\mathcal P_1$ whose elements have uniformly bounded second moment are relatively compact in $\mathcal P_1$ (see \cite[Lemma 5.7]{C}), Theorem \ref{prp:m}
ensures that the sequence $({\cal T}(m_n))_n$  is uniformly bounded in
$C^{1/2}([0,T];\mathcal P_1)$ and  $L^\infty(0,T;{\mathcal P}_2)$.
Hence we obtain that, possibly for a subsequence (still denoted by ${\cal T}(m_n)$), ${\cal T}(m_n)$ converges to some~$\mu$ in the $C^0([0,T];{\mathcal P}_1(\R^{3}))$-topology.\\
Invoking Schauder fixed point Theorem, we accomplish the proof of the existence of a solution $(u, m)$ of system \eqref{eq:MFGe}.\\
To get \eqref{ambrosioe} we note that,
by Lemma~\ref{1802:thm1} and Theorem \ref{prp:m}, all the solution $m_N$ of the truncated problem \ref{continuityeN} coincide with $m$ if $N$ is sufficiently large. Hence still from Lemma~\ref{1802:thm1}, if $(u,m)$ is a solution of \eqref{eq:MFGe}, then
 for any function $\phi\in C^0_0(\re^3)$, we have
\begin{equation}\label{reprfor}
\int_{\re^3} \phi\, dm(t)=\int_{\re^3}\phi(\overline{ \gamma}_x(t))m_0(x)\, dx
\end{equation}
where $\overline{\gamma}_x$ is the solution of \eqref{flowe} and it is uniquely defined for any $x\in\R^3$.\\
Uniform estimates a) and b) come from Lemma \ref{1802:lemma1} and Lemma \ref{1802:lemma4} noting that $m_N=m$ for $N$ sufficiently large.
\end{proof}

%%%%%%%%%%%%%%

Now, we can prove Theorem~\ref{thm:main_illi}.
\begin{proof}[Theorem \ref{thm:main_illi}]
1. The uniform estimates for~$u_{\varepsilon}$ and for~$m_{\varepsilon}$ in Proposition~\ref{mainproe} ensure that there exist two subsequences, which we will still denote $u_{\varepsilon}$ and respectively $m_{\varepsilon}$ such that, as $\epsilon\to 0^+$, $u_{\varepsilon}$ converge to some function~$u$ locally uniformly in $(x,t)$ and $m_\varepsilon$ converge to some $m\in C^0([0,T], \mathcal P_1(\re^3))$ in the $C^0([0,T],\mathcal P_1(\re^3))$-topology and in the weak-$*$-$L^\infty_{loc}(\re^3\times(0,T))$ topology.
In particular, we get $m(0)=m_0$ and we deduce that $u$ is Lipschitz continuous in~$x$, locally Lipschitz continuous in~$t$, semiconcave in $x$ and
$Du_{\varepsilon}\to Du$ a.e. (because of the semiconcavity estimate in Proposition~\ref{mainproe}-(a) and \cite[Theorem 3.3.3]{CS}).

Being a solution to \eqref{eq:MFGe}-(ii), the function $m_{\varepsilon}$ fulfills
\begin{equation}
\int_0^T\int_{\mathbb{H}^{1}}m_{\varepsilon}(-\partial_t\varphi+D_{\cH}u_{\varepsilon}\cdot D_{\cH}\varphi)dxdt=0\qquad \forall \varphi\in C_{c}^{\infty}(\re^3\times (0,T)).
\end{equation}
Passing to the limit as $\varepsilon\to0^+$ we get that $m$ is a solution to \eqref{eq:MFGintrin}-(ii). On the other hand, by standard stability results for viscosity solutions, we obtain that $u$ is a viscosity solution to~ \eqref{eq:MFGe}-(i).\\

2. Consider the function $m$ found in point (i). Since $t\rightarrow m_t$ is narrowly continuous, applying \cite[Theorem 8.2.1]{AGS}, we get that there exists a probability measure $\eta^*$ in $\re^3\times\Gamma$ which satisfies points (i) and (ii) of \cite[Theorem 8.2.1]{AGS}.
We denote $\eta\in {\mathcal P}(\Gamma)$ the measure on $\Gamma$ defined as $\eta(A):=\eta^*(\re^3\times A)$ for every $A\subset \Gamma$ measurable. We claim that $\eta$ is a MFG equilibrium. Indeed, by \cite[equation (8.2.1)]{AGS}, we have $e_0\#\eta=m_0$ so $\eta\in {\mathcal P}_{m_0}(\Gamma)$. On the other hand, by \cite[Theorem 8.2.1]{AGS}-(i), $\eta$ is supported on the curves solving \eqref{1802:rmk2_i}. From Lemma \ref{lemma:OSeps} such curves are optimal, i.e. belong to the set $\Gamma^{\eta}[x]$, hence our claim is proved.\\
Let us now prove that $(u,m)$ is a mild solution. By \cite[Theorem 8.2.1]{AGS}, we have $m_t=e_t\#\eta$. Moreover, by Lemma \ref{3.2}, the function $u$ found in point~(i) is the value function associated to $m$  as in Definition \ref{mild}-(ii). In conclusion $(u,m)$ is a mild solution to \eqref{eq:MFGintrin}.
\end{proof}

%%%%%%%%%%%%%%%%%%%%%

\section{Generalizations}\label{ext}

\subsection{Some structures of Heisenberg type}

In this section we generalize the previous results to some structures of Heisenberg type (see \cite[Definition 18.1.1]{BLU} for the precise definition and \cite[Theorem 18.2.1]{BLU} for a useful characterization). Throughout this section, the state space is $\re^n$, with $n\geq 1$, and, for any $x=(x_1,\dots,x_n)$, the matrix $B=B(x)\in M^{n\times m}$ (for some $m\leq n$) has the form
\[
\begin{pmatrix}
\!\!&h_{11}& 0&0&\dots&0\!\\
\!\!&h_{21}(x_1)&h_{22}(x_1)&0&\dots&0\!\\
\!\!&h_{31}(x_1,x_2)&h_{32}(x_1,x_2)&h_{33}(x_1,x_2)&\dots&0\!\\
\!\!&\vdots&\vdots&\vdots&\ddots&\vdots\!\\
\!\!&h_{m1}(x_1,\dots, x_{m-1})&h_{m2}(x_1,\dots, x_{m-1})&h_{m3}(x_1,\dots, x_{m-1})&\dots&h_{mm}(x_1,\dots, x_{m-1})\!\\
\!\!&h_{(m+1)1}(x_1,\dots, x_{m})&h_{(m+1)2}(x_1,\dots, x_{m})&h_{(m+1)3}(x_1,\dots, x_{m})&\dots&h_{(m+1)m}(x_1,\dots, x_{m})\!\\
\!\!&\vdots&\vdots&\vdots&\ddots&\vdots\!\\
\!\!&h_{n1}(x_1,\dots, x_{m})&h_{n2}(x_1,\dots, x_{m})&h_{n3}(x_1,\dots, x_{m})&\dots&h_{nm}(x_1,\dots, x_{m})\!
\end{pmatrix}.
\]
In other words, the coefficients of the matrix $B$ fulfills:
\begin{equation}\label{struct_gen}
\left\{\begin{array}{l}
h_{11}\in\re\setminus\{0\}; \qquad \textrm{for } i\in\{1,\dots,m\} \textrm{ and }j>i,\quad h_{ij}=0;\\
\textrm{for }i\in\{2,\dots,m\},\quad h_{ij}(x)=h_{ij}(x_1,\dots,x_{i-1});\\
\textrm{for }i\in\{m+1,\dots,n\},\quad h_{ij}(x)=h_{ij}(x_1,\dots,x_{m}).
\end{array}\right.
\end{equation}

We require that the coefficients $h_{ij}$ fulfill the following hypotheses
\begin{itemize}
\item[(H4)] $h_{ij}\in C^2(\re^n)$ are globally Lipschitz continuous (in particular they have an at most linear growth at infinity) with $D^2h_{ij}$ bounded; the $h_{ij}$'s fulfill~\eqref{struct_gen};
\item[(H5)] there exists a constant $C>0$ such that $\|h_{ij}\|_\infty\leq C$ for any $i=1,\dots,\min\{n-1,m\}$, $j\in\{1,\dots,i\}$;
\item[(H6)] $\{x\in\re^n:\, h_{11}h_{22}\dots h_{mm}=0\}$ has null measure in $\re^n$.
\end{itemize}
Let us observe that the boundedness assumption in $(H5)$ only concerns the first $\min\{n-1,m\}$ rows of $B$; hence, when $n=m$, the coefficients of the $m$-th row can be unbounded, while, when $n>m$, the coefficients of any $i$-th row with $i>m$ can be unbounded.
Let us also emphasize that we do not require that the column of $B$ fulfill the H\"ormander condition and neither assumption 2) of \cite[Theorem 18.2.1]{BLU}.

Now, the generic player in the MFG aims at choosing a control $\alpha=(\alpha_1,\dots,\alpha_m)\in L^2(0,T;\re^m)$ so to minimize the cost in~\eqref{Jgen} when its dynamics $x(\cdot)=(x_1(\cdot), x_2(\cdot),\dots,x_n(\cdot))$ obeys to the differential equation
\begin{equation}\label{dyn_gen}
x_k'(s)=\sum_{j=1}^{\min\{k,m\}}h_{kj}(x(s))\alpha_j(s)\qquad \textrm{with } k=1,\dots,n.
\end{equation}

\begin{corollary}\label{corollary}
Assume hypotheses $(H1)$-$(H6)$. Then, the results of Theorem~\ref{thm:main_illi} hold true.
\end{corollary}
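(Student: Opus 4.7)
The plan is to reproduce the four-step scheme of the proof of Theorem~\ref{thm:main_illi} in the more general setting provided by (H4)--(H6).

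First, I would construct an $\varepsilon$-completion $B^\varepsilon(x)\in M^{n\times n}$ of $B$ by appending $n-m$ columns so that the enlarged matrix is invertible almost everywhere. A natural choice is to define $B^\varepsilon(x)$ as the $n\times n$ matrix obtained from $B(x)$ by appending an $n\times(n-m)$ block whose first $m$ rows are zero and whose last $n-m$ rows form $\varepsilon I_{n-m}$. The resulting matrix is block lower triangular with determinant $h_{11}h_{22}\cdots h_{mm}\cdot \varepsilon^{n-m}$, which by (H6) does not vanish almost everywhere. Hence $B^\varepsilon(B^\varepsilon)^T$ is invertible a.e., so the optimal synthesis argument of Lemma~\ref{lemma:OSeps}-(b) applies. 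The associated dynamics adds $n-m$ extra controls $\alpha_{m+1},\dots,\alpha_n$, each multiplied by $\varepsilon$, whose contribution vanishes uniformly as $\varepsilon\to 0^+$.

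Next, I would repeat the optimal-control analysis of Section~\ref{OC}. The triangular structure (H4) is the key: the $k$-th component of a trajectory governed by~\eqref{dyn_gen} depends only on the controls and on components $x_1,\dots,x_{k-1}$ (for $k\le m$) or $x_1,\dots,x_m$ (for $k>m$). This permits an inductive Gronwall-type argument over the rows to derive the analog of the $L^\infty$ bound $\|\alpha^*\|_\infty\le C(1+|x_1|+\dots+|x_m|)$ of Proposition~\ref{boundalfa} and the Lipschitz and semiconcavity estimates of Lemmas~\ref{LLS}--\ref{lemmasemic}. Hypothesis (H5) ensures that all the coefficients multiplying the lower coordinates during those estimates remain bounded; the possibly unbounded entries, which occur only in rows whose dynamics do not feed back into the lower ones, contribute only a polynomial weight $C(1+|x_1|^2+\dots+|x_m|^2)$ of the same form appearing in Proposition~\ref{mainproe}-(a).

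Third, I would establish the analog of Theorem~\ref{prp:m} for the continuity equation through the same vanishing viscosity plus truncation procedure of Section~\ref{sect:c_illi}: the possibly unbounded entries $h_{ij}$ with $i>\min\{n-1,m\}$ are replaced by $\psi_N$-cutoffs, and the compactness of $\textrm{supp}(m_0)$ together with the triangular structure yields a uniform (in $N$) confinement of the characteristics, so that the truncation becomes irrelevant for $N$ large. A Schauder fixed point argument as in Proposition~\ref{mainproe} then produces a solution $(u_\varepsilon,m_\varepsilon)$ of the $\varepsilon$-regularized system with estimates uniform in $\varepsilon$; passing $\varepsilon\to 0^+$ and invoking the superposition principle \cite[Theorem 8.2.1]{AGS} delivers both the weak and the mild solution exactly as in Section~\ref{sect:pr_illi}.

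The main obstacle is to verify that the perturbation bookkeeping behind the Lipschitz continuity and the semiconcavity of the value function (Lemmas~\ref{LLS} and~\ref{lemmasemic}) carries through uniformly in $\varepsilon$ under the general structure~\eqref{struct_gen}. The induction over the components of $x$, together with the a priori $L^2$ bound~\eqref{bd:alpa_L2} and (H5), is the device that replaces the explicit Heisenberg computations and guarantees that all the constants in the final estimates are independent of $\varepsilon$.
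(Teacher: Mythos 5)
Your proposal follows the paper's own proof essentially step by step: the same $\varepsilon$-completion of $B$ by appending an $n\times(n-m)$ block with $\varepsilon I_{n-m}$ in the lower-right corner, invocation of (H6) to recover the a.e.\ invertibility of $B^\varepsilon(B^\varepsilon)^T$ and hence the optimal-synthesis uniqueness, an inductive use of the triangular structure (H4)--(H5) to propagate the Lipschitz/semiconcavity estimates, and finally the truncation--vanishing-viscosity--Schauder scheme for the continuity equation. The only real difference is one of granularity: the paper splits the boundedness argument in the optimal-synthesis lemma explicitly into the cases $n=m$ (where the $m$-th row may be unbounded but does not feed back into the first $m-1$ equations) and $n>m$ (where rows $m+1,\dots,n$ may be unbounded but depend only on $x_1,\dots,x_m$), whereas you compress this into ``an inductive Gronwall-type argument over the rows.'' Both formulations rest on the same feature — the unbounded rows never feed back into the coordinates that drive them — so the substance matches.
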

\begin{proof}
The proof is just an adaptation with some heavy calculations of the proof of Theorem~\ref{thm:main_illi} so we only describe the main changes. 

As before, for $\varepsilon\in (0,1]$, we introduce the approximating problem~\eqref{eq:MFGe} with the matrix
\[
B^\varepsilon:=\begin{pmatrix}
\!\!&h_{11}&\dots&0&0&\dots&0\!\\
\!\!&h_{21}(x_1)&\dots&0&0&\dots&0\!\\
\!\!&\vdots&\ddots&\vdots&\vdots&\dots&0\!\\
\!\!&h_{m1}(x_1,\dots, x_{m-1})&\dots&h_{mm}(x_1,\dots, x_{m-1})&0&\dots&0\!\\
\!\!&h_{(m+1)1}(x_1,\dots, x_{m})&\dots&h_{(m+1)m}(x_1,\dots, x_{m})&\varepsilon\!&\dots&0\!\\
\!\!&\vdots&\ddots&\vdots&0&\ddots&0\!\\
\!\!&h_{n1}(x_1,\dots, x_{m})&\dots&h_{nm}(x_1,\dots, x_{m})&0&\dots&\varepsilon\!
\end{pmatrix}\in M^{n\times n}
\]
where the first $m$ columns contain the matrix $B$ while the last $(n-m)$ columns have coefficients
\[
h_{ij}=\varepsilon \qquad\textrm{if }i=j\in\{m+1,\dots,n\},\qquad h_{ij}=0\qquad \textrm{otherwise}.
\]
Explicitely, for $p=(p_1,\dots,p_n)$, the Hamiltonian $H^\varepsilon$ and the drift $\partial_p H^\varepsilon$ are respectively
\begin{eqnarray*}
H^\varepsilon(x,p)&=&\frac12\sum_{i=1}^m\left(\sum_{k=i}^n h_{ki}p_k\right)^2 + \frac{\varepsilon^2}2\sum_{i=m+1}^n p_i^2\\
\frac{\partial H^\varepsilon}{\partial p_j} &=&
\left\{\begin{array}{ll}
\sum_{i=1}^jh_{ji}\left(\sum_{k=i}^n h_{ki}p_k\right)&\quad \textrm{if }j\in\{1,\dots,m\}\\
\sum_{i=1}^m h_{ji}\left(\sum_{k=i}^n h_{ki}p_k\right)+\varepsilon^2 p_j &\quad \textrm{if }j\in\{m+1,\dots,n\}.
\end{array}\right.
\end{eqnarray*}
Assumption (H6) guarantees the uniqueness of the optimal trajectory after the initial time, following the same argument as in Proposition \ref{OS_e}.

The rest of the proof relies on the structure of $B$, namely on these facts: the first $m$ rows have a diagonal form, the coefficients of the $i$-th row, with $i\leq m$ only depend on $x_1,\dots, x_{i-1}$, the coefficients of the last $(n-m)$ rows only depend on $x_1,\dots,x_m$. Indeed, these properties permit to proceed iteratively on the first $m$ coordinates and, afterwards, to study the last $(n-m)$ ones.

For the sake of completeness let us detail the adaptation of the optimal synthesis of Lemma~\ref{lemma:OSeps}. To prove point a) it suffices to establish that the solution to the differential equation~\eqref{1802:rmk2_i} is bounded and Lipschitz continuous (note that also in this case $Du$ is bounded). Indeed, the first $m$ coordinates in~\eqref{1802:rmk2_i} read
\begin{equation}\label{OS_gen1}
\gamma_j'=\sum_{i=1}^j h_{ji}\left(\sum_{k=i}^n h_{ki} u_{x_k} \right).
\end{equation}
We split our arguments according to the fact that $n=m$ or $n>m$.\\
For $n=m$, the structure of $B$ ensures that all the coefficents $h_{ij}$ are independent of $x_m$. In particular, we deduce that the system of differential equations~\eqref{OS_gen1} with $j=1,\dots,m-1$ is independent of $\gamma_m$ and, by $(H5)$, gives an estimate of the form $|\xi'|\leq C(|\xi|+1)$ for $\xi:=(\gamma_1,\dots,\gamma_{m-1})$. By standard theory, we deduce that, for $i\in\{1,\dots,m-1\}$, the curves $\gamma_i$, are bounded and Lipschitz continuous. Finally, we plug this bound in \eqref{OS_gen1} with $j=m$ and we obtain that also $\gamma_m$ is bounded and Lipschitz continuous.\\
For $n>m$, assumption $(H5)$ ensures that $h_{ij}$ are all bounded for $j\in\{1,\dots,m\}$. We deduce that in the system of \eqref{OS_gen1} with $j=1,\dots,m$ all the right hand sides are Lipschitz continuous in $(\gamma_1,\dots,\gamma_{m})$. In particular we obtain that, for $j=1,\dots,m$, all the $\gamma_j$ are bounded and Lipschitz continuous. Afterwards, we consider the system of \eqref{OS_gen1} with $j=m+1,\dots,n$ and, again, we have that all the right hand sides are bounded. Hence, $\gamma$ is bounded and Lipschitz continuous.\\
To get point b) of Lemma~\ref{lemma:OSeps} we choose as increment $\xi=(\xi_1,\dots,\xi_n)\in\re^n$ such that $\xi_k=\sum_{j=1}^nh^\varepsilon_{kj}(x)\beta_j$ where the $\beta_j$'s are arbitrary and the $h^\varepsilon_{kj}(x)$'s are the coefficients of $B^\varepsilon (x)$, namely  $B^\varepsilon=(h^\varepsilon_{kj})_{kj}$.
Now, the necessary conditions read:
\begin{equation}\label{MP_gen}
\left\{
\begin{array}{ll}
(i)\quad x'= p\,B^{\varepsilon}(x)B^{\varepsilon}(x)^T,\\
(ii)\quad p'=- \dfrac{D\left(|p\,B^{\varepsilon}(x)|^2\right)}2+  D f(x,s)\\
(iii)\quad x(t)=x,\quad p(T)=-D g(x(T))
\end{array}\right.
\end{equation}
and
\begin{equation}\label{MP_gen2}
\alpha(s)= p(s)\,B^{\varepsilon}(x(s)),\ \text{ a.e on }[t,T].
\end{equation}
We observe that, by equations \eqref{dyn_gen} and \eqref{MP_gen2}, there holds $\partial_{x_i} \left(|p\,B^{\varepsilon}(x(s))|^2\right)= 2p\partial_{x_i} (x'(s))$ and consequently, equation~\eqref{MP_gen}-(ii) can be written as
\[
\partial_{x_i} f(x(s),s) = p_i'(s)+p\partial_{x_i} (x'(s))\qquad \textrm{for }i=1,\dots,n.
\]
One can follow the same arguments as in the proof of point b) of Lemma~\ref{lemma:OSeps}: integrating by parts and using the last relation and the final condition in \eqref{MP_gen}-(iii), one gets that $Du_{\varepsilon}(x,t)B^{\varepsilon}(x)=-\alpha(t)$ which, together with assumption $(H6)$, yields the uniqueness of the optimal trajectory.
\end{proof}
Let us now give some applications of the above result.
\begin{example}{\it $d$-dimensional Heisenberg.} In the state space $\re^{2d+1}$, with $d\in\mathbb N\setminus\{0\}$, consider the matrix
\[
B(x)=\begin{pmatrix}
\!\!&1&0&\dots&\dots&\dots&0\!\\
\!\!&0&1&0&\dots&\dots&0\!\\
\!\!&0&0&1&0&\dots&0\!\\
\!\!&0&0&0&1&\dots&0\!\\
\!\!&\vdots&\vdots&\vdots&\vdots&\ddots&\vdots\!\\
\!\!&0&0&0&0&\dots&1\!\\
\!\!&-x_{d+1}&\dots&-x_{2d}&x_1&\dots&x_d\!%\\
\end{pmatrix}
\]
where the first $2d$ rows contain the identity matrix.
This matrix fulfills assumptions $(H4)$-$(H6)$ and encompasses the matrix in~\eqref{Hd} when $d=1$.
\end{example}

\begin{example}{\it Completely degenerate case.} In the state space $\re^{n}$, consider the matrix
\[
B(x)=\begin{pmatrix}
\!\!&I_{m}\!\\
\!\!&0_{(n-m),m}\!\\
\end{pmatrix}
\]
where $I_{m}$ is the identity matrix $m\times m$ while $0_{(n-m),m}$ is the null matrix $(n-m)\times m$. With this matrix, the generic player in the MFG controls only its first $m$ coordinates.
This matrix fulfills assumptions $(H4)$-$(H6)$.
\end{example}

\begin{example}{\it Grushin case.} In the state space $\re^{2}$, consider the matrix
\[
B(x)=\begin{pmatrix}
\!\!&1&0\!\\
\!\!&0&x_1\!\\
\end{pmatrix}.
\]
This matrix fulfills assumptions $(H4)$-$(H6)$. In particular, Corollary~\ref{corollary} deals with MFG with Grushin dynamics with unbounded coefficients which was not enconpassed among the cases coped in \cite[Theorem 1.1]{MMMT}.
\end{example}

\begin{remark}
Note that our result holds also for more general structures that do not satisfy assumptions of \cite[Theorem 18.2.1]{BLU}; indeed, for instance, the case in \cite[Example 18.1.4]{BLU} satisfies our assumptions even if is not a $H$-type group.
\end{remark}

\begin{remark}
Let us finally observe that the results in Corollary~\ref{corollary} can be further generalized with some slight modifications of the previous arguments. For instance, one can weaken assumption~$(H5)$ requiring only
\begin{itemize}
\item[(H5')] for $j\in\{1,\dots,m\}$, $h_{ij}$ has an at most linear growth with respect to $(x_1,\dots,x_j)$ uniformly in $x$.
\end{itemize}
In this case the proof is similar to the one of Corollary~\ref{corollary}. We only give some details on the proof that the any solution to~\eqref{1802:rmk2_i} is bounded and Lipschitz continuous. Indeed equation~\eqref{1802:rmk2_i} reads as the system~formed by equations~\eqref{OS_gen1} with $j\in\{1,\dots,n\}$. We proceed iteratively on~$j$. For $j=1$, equation~\eqref{OS_gen1} has a right hand side which has an at most linear growth only in $\gamma_1$ uniformly in $\gamma$. We deduce that $\gamma_1$ is bounded and Lipschitz continuous. Afterwards, by these properties of $\gamma_1$, we deduce that the right hand side of equation~\eqref{OS_gen1} with $j=2$  has an at most linear growth only in $\gamma_2$; hence also $\gamma_2$ is bounded and Lipschitz continuous. Iteratively, we get that the whole $\gamma$ is bounded and Lipschitz continuous.\\
We here give an example of a  $4 \times 3$ matrix which satisfies (H5') but not (H5):
\[
B(x)=\begin{pmatrix}
\!\!&1&0&0\!\\
\!\!&h_{21}(x_1)&x_1&0\!\\
\!\!&x_1+x_2&x_1+x_2&x_1+x_2\!\\
\!\!&x_1+x_2+x_3&x_1+x_2+x_3&x_1+x_2+x_3\!\\
\end{pmatrix}
\]
where the function $h_{21}$ is bounded and with at most a linear growth at infinity. Indeed
assumption $(H5)$ requires also that the terms $h_{22}$ and $h_{3i}$, $i=1,2,3$ are bounded.
\end{remark}

\subsection{Hamiltonian of the form $H(x,p)=|pB(x)|^\gamma$ with $\gamma\in[1,2]$}
Our results can be extended to any Hamiltonian of the form $H(x,p)=|pB(x)|^\gamma$ with $\gamma\in[1,2]$. 
We can write explicitely the Hamiltonian
$$
H(x,p):=((p_1-x_2p_3)^2+(p_2+x_1p_3)^2))^{\gamma/2}$$
and the drift term $\partial_pH(x,p)$ in the continuity equation is
\begin{eqnarray*}\label{Hp}
&&\partial_pH(x,p)=\gamma (|pB(x)|^2)^{\gamma/2-1}pB(x)B(x)^T\\
&&=\gamma \frac{1}{(|pB(x)|^2)^{1-\gamma/2}}(p_1-x_2p_3, p_2+x_1p_3, -p_1x_2+p_2x_1+p_3(x_1^2+x_2^2)).
\end{eqnarray*}
Since $\gamma\in[1,2]$ this term has still a sublinear growth.\\
In this case in the cost functional for the associated control problem we have to replace the term 
$\frac{1}{2} |\alpha|^2$ with $\frac{1}{2} |\alpha|^{\gamma'}$,
where $\frac{1}{\gamma}+ \frac{1}{\gamma'}=1$.\\ 
The results on the associated optimal control, found in Section \ref{OC} for $\gamma=2$, hold also in this case. In particular
from the optimality conditions, the optimal trajectories $x^*(s)$ associated to the optimal control problem given in \eqref{tage} for $\gamma=2$ satisfy:
\begin{equation}\label{systemgamma}
x'= \partial_pH(x,Du)=\gamma \frac{1}{|pB(x)|^{2-\gamma}}(u_{x_1}-x_2u_{x_3}, u_{x_2}+x_1u_{x_3}, -u_{x_1}x_2+u_{x_2}x_1+u_{x_3}(x_1^2+x_2^2)).
\end{equation}
A key point to obtain our main result is the optimal synthesis proved in Lemma \ref{lemma:OSeps} for the case $\gamma=2$, in particular the Lipschitz continuity of the solution of system \eqref{1802:rmk2_i}.
In this case, since $Du$ is still bounded, the first two components of the right hand side of \eqref{systemgamma} has sublinear growth. Hence we can argue as in point a) of the proof of Lemma \ref{lemma:OSeps} to obtain that there exists a constant $C$ such that $\xi(s):=(x_1(s),x_2(s))$ satisfies 
$|\xi|'\leq C(|\xi|^{\beta}+1)$ with $\beta<1$. Then $\xi(s):=(x_1(s),x_2(s))$ is bounded and from the third component of \eqref{systemgamma} also 
$x_3(s)$ in bounded, then the curve $x(s)$ is Lipschitz continuous.
Since the drift term in the continuity equation has a sublinear behaviour we can repeat the arguments of Section \ref{sect:c_illi} to get also in this case the main result.

\vskip .3truecm
\noindent{\bf Acknowledgments.} 
The first and the second authors are members of GNAMPA-INdAM and were partially supported also by the research project of the University of Padova ``Mean-Field Games and Nonlinear PDEs'', by the Fondazione CaRiPaRo Project ``Nonlinear Partial Differential Equations: Asymptotic Problems and Mean-Field Games'' and by KAUST project OSR-2017-CRG6-3452.01. The third author has been partially funded by the ANR project ANR-16-CE40-0015-01.

{\small{
 }

\end{document}